\newtheorem{theorem}{Theorem}[section]
\newtheorem{lemma}[theorem]{Lemma}
\newtheorem{proposition}[theorem]{Proposition}
\newtheorem{corollary}[theorem]{Corollary}
\newtheorem{remark}[theorem]{Remark}
\newcommand{\bb}[1]{{\mathbb #1}}
\newcommand{\supp}{\mathop{\rm supp}}
\begin{document}

\title[Scaling limit for a long-range divisible sandpile]{Scaling limit for a long-range divisible sandpile}

\author[Susana Fr\'ometa and Milton Jara]{Susana Fr\'ometa and Milton Jara}

\begin{abstract}
We study the scaling limit of a divisible sandpile model associated with a truncated $\alpha$-stable random walk. We prove that the limiting distribution is related to an obstacle problem for a truncated fractional Laplacian. We also provide, as a fundamental tool, precise asymptotic expansions for the corresponding rescaled discrete Green's functions. In particular, the convergence rate of these Green's functions to its continuous counterpart is derived.
\end{abstract}

\thanks{S.F. and M.J. were partially supported by CNPq and FAPERJ}

\address{\noindent Susana Fr\'ometa: IMPA, Estrada Dona Castorina 110, CEP 22460 Rio de
   Janeiro, Brasil and UFRGS, Campus do Vale, bairro Agronomia, Porto Alegre, Brasil \newline e-mail: \rm
   \texttt{susana@impa.br}}

\address{\noindent Milton Jara: IMPA, Estrada Dona Castorina 110, CEP 22460 Rio de
   Janeiro, Brasil \newline e-mail: \rm
   \texttt{mjara@impa.br}}

\keywords{Divisible sandpile, Green's functions, obstacle problem, $\alpha$-stable laws}

\subjclass[2010]{60K35, 31C20, 35R35}

\maketitle

\newcommand{\<}{\langle}
\renewcommand{\>}{\rangle}
\renewcommand{\Cap}{{\rm cap}}

\section{Introduction}
In this paper we study the divisible sandpile model, which is a continuous version of the Abelian sandpile introduced in 1987 by Bak, Tang and Weisenfeld \cite{BTW} as an example of a dynamical system displaying self-organized criticality. 

To our knowledge, the divisible sandpile was introduced by Levine and Peres in \cite{Levine1}. The main difference between the divisible sandpile and the Abelian sandpile is that instead of discrete particles of unit mass, each site can contain a continuous amount of mass. At the start there is an initial density of mass distributed on the lattice $\mathbb{Z}^d$. A lattice site is said to be a {\em full site} if it has mass at least $1$. At each time step, each full site is {\em toppled} by keeping mass $1$ for itself and distributing the excess mass among the lattice proportionally to the step distribution of a certain transition probability. As time goes to infinity the mass approaches a final distribution in which each site has mass less than or equal to $1$.

In \cite{Levine1} and \cite{Levine2}, Levine and Peres studied the scaling limit of this final distribution for the divisible sandpile in which the excess mass is split in every toppling equally among neighbors. In other words, they study the divisible sandpile model associated with the transition probability of a symmetric, simple random walk on $\mathbb{Z}^d$. The proof is based within the framework of simple random walks. The corresponding scaling limit is related to the obstacle problem for the classical Laplacian operator. In \cite{cyrille} Lucas studied a divisible sandpile model that he calls the {\em unfair divisible sandpile}, in which the toppling procedure distributes mass to neighbors proportionally to a nearest neighbors, drifted random walk. The corresponding scaling limit of the final distribution is related to a so-called {\em true heat ball}.

\begin{figure}[ht]
	\centering
	\subfigure[$3400$ iterations]{\includegraphics[width=0.35\textwidth]{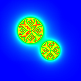}}\hspace{0.1\textwidth}
	\subfigure[$26500$ iterations]{\includegraphics[width=0.35\textwidth]{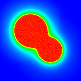}}
	\caption{Truncated $\alpha$-stable sandpile starting with masses $700$ and $1300$ in two different sites. In each picture we use a scale of color that indicates the amount of mass at each site.}\label{figure1}
\end{figure}

The so-called {\em internal diffusion limited aggregation} model (iDLA) is a stochastic version of the Abelian sandpile introduced by Lawler, Bramson and Griffeath in \cite{lawler1992}. In the mentioned paper, the authors studied the scaling limit of the random set of final occupied sites starting with $m$ particles at the origin, which happens to be a ball with volume $m$ in $\mathbb R^d$. In \cite{Levine2} Levine and Peres also studied the scaling limit of the iDLA but starting from an arbitrary distribution of mass, they proved that considering the same initial distribution in the divisible sandpile and in the iDLA, the scaling limit of the final set of occupied sites in both models happens to be the same. In \cite{cyrille}, a drifted version of the iDLA was also studied and the scaling limit was also proved to coincide with the scaling limit of the drifted divisible sandpile.   

Regarding the Abelian sandpile model some recent advances correspond to Pegden and Smart; they proved in \cite{smart} that the Abelian sandpile converges in the weak-$*$ topology to a limit characterized by an elliptic obstacle problem. Later in \cite{LevineSmart}, Levine, Pegden, and Smart identified Apollonian structures in the scaling limit of the Abelian sandpile. Structures of the limit of the Abelian sandpile were previously observed in \cite{SadhuDhar}.     

Our aim is to study a family of divisible sandpiles, which we call {\em truncated, $\alpha$-stable} divisible sandpiles, that distribute mass not only to nearest neighbors. We present a scaling limit for the final distribution of a sequence of divisible sandpiles on which the excess mass is distributed proportionally to the transition probability of a truncated $\alpha$-stable random walk. Figure \ref{figure1} represents a simulation of the truncated $\alpha$-stable sandpile starting with mass in two different lattice sites.

Let $\rho$ be a nonnegative function on $\mathbb{R}^2$ with compact support. We will use this function $\rho$ to choose the initial mass density of the truncated $\alpha$-stable sandpile. As the lattice spacing goes to zero, we run the sandpile with an initial distribution given by $\rho$ restricted to the lattice. We follow the approach of \cite{Levine1} and \cite{Levine2} that uses the odometer function as a fundamental tool for the study of sandpiles models. In Theorem \ref{odoconvergence} we obtain that the odometer function related to the $\alpha$-stable sandpile converges uniformly to its continuous version. We also obtain, as stated in Theorem~\ref{convergence-distribution}, that the final distribution of the rescaled sandpile converges and that its limit is related to an obstacle problem for the truncated fractional Laplacian. Obstacle problems are a current subject of interest in the literature; see, for example, \cite{CaffarelliObstacle}, \cite{Friedman2010}. Nevertheless, to our knowledge the obstacle problem we need to deal with in this work has not been considered in the literature. The approach of \cite{SilvestreTesis} can be adapted to our situation, although the necessary changes are not straightforward. The main source of trouble comes from the fact that the truncated Laplacian is not self-similar. A more detailed discussion can be found in Section \ref{CONT}.

Green's function estimates for the truncated $\alpha$-stable random walk are a key tool in this work. In the context of simple random walks, such estimates are well known. A comprehensive reference, which also shows the power of these estimates as a tool in a variety of contexts, is Lawler's book \cite{LawlerIntersections}. In \cite{LeGall}, Le Gall and Rosen obtained precise estimates for Green's functions of random walks in the domain of attraction of stable laws. 

In the specific context of truncated $\alpha$-stable random walks with $\alpha \in (1,2)$ in dimension $d=2$, we present in Theorem \ref{GMntoGM} a precise asymptotic expansion for the corresponding rescaled discrete Green's functions. In particular, the convergence rate of these Green's functions to its continuous counterpart is derived. It turns out that the estimates of \cite{LeGall} applied in this context are not enough to prove Theorem \ref{odoconvergence} and more precise estimates are needed. 

This asymptotic expansion is the most technical part of this work. In particular, the optimal convergence rate $n^{-\alpha}$ is obtained only after the introduction of a non-trivial, nearest-neighbor correction to the transition probability of the truncated $\alpha$-stable sandpile. Without this correction term, the convergence rate can be checked to be $n^{\alpha-2}$. 

The asymptotic expansion of the Green's function stated in Theorem \ref{GMntoGM} is of independent interest and it can have various applications in numerical methods related to pseudodifferential operators like the fractional Laplacian. What Theorem \ref{GMntoGM} is saying is that if we take $k_1=0$ in \eqref{probdim1}, we obtain a discrete approximation of the truncated Laplacian \eqref{truncfractlapl} for which the associated Green's function converges at rate $n^{-(2-\alpha)}$, which is   far from the optimal rate $n^{-\alpha}$ and gets worse as $\alpha \to 2$. Introducing the calibration constant $k_1$ we obtain a discrete approximation of \eqref{truncfractlapl} for which the Green's function converges at the rate $n^{-\alpha}$, which can be checked to be optimal by scaling. 

The convergence of the final distribution of particles for noncalibrated, truncated $\alpha$-stable sandpiles remains open. We conjecture that calibration is needed in order to obtain the strong convergence of the odometer function stated in Theorem~\ref{odoconvergence}, but that the weak-$*$ convergence of the final distribution stated in Theorem~\ref{convergence-distribution} holds without the calibration.

\subsection{Notations and results}

The next few paragraphs are devoted to describe the random walk we use in the toppling procedure of the truncated $\alpha$-stable divisible sandpile model.

Let us introduce the following notations:

For $R>0$, we denote $B_R=\{x\in\mathbb{R}^2;|x|<R\}$. For a domain $A$ in $\mathbb{R}^2$, we denote $A^{::}=A\cap\mathbb{Z}^2$. We define the discrete boundary of the domain $A$, as 
$$\partial A^{::}=\{x\in\mathbb{Z}^2;[x-\tfrac{1}{2},x+\tfrac{1}{2}]^2 ~/\!\!\!\!\!\subset A\text{~and~}[x-\tfrac{1}{2},x+\tfrac{1}{2}]^2\cap A\neq\emptyset\}.$$

We fix a parameter $\alpha\in(1,2)$, and two positive parameters $r<M$. We define $A_{r,M}:=B_M\setminus B_r$. Now we will consider a function $F_1=F_{1,r,M}:\mathbb{Z}^2\rightarrow\mathbb{R}$, which is essentially the indicator of the set $A_{r,M}^{::}$, but for computational convenience, our function $F_1$ takes specific values in $\partial A_{r,M}^{::}$. We define $F_1(y)$ as the proportion of the square $[y-\frac{1}{2},y+\frac{1}{2}]^2$ that is inside the set $A_{r,M}$, that is,
$$F_1(y)=\mu([y-\tfrac{1}{2},y+\tfrac{1}{2}]^2\cap A_{r,M}),\text{~for~}y\in\mathbb{Z}^2,$$
where $\mu$ represents the Lebesgue measure in $\mathbb{R}^2$. Notice that $F_1$ is equal to $1$ in $A_{r,M}^{::}\setminus\partial A_{r,M}^{::}$, and equal to $0$ outside $A_{r,M}^{::}\cup\partial A_{r,M}^{::}$.

We define the probability jump $p:\mathbb{Z}^2\rightarrow [0,1]$ as
\begin{equation}\label{probdim1}
	p(y)=c_1\Big(k_1\textbf{1}_{\{|y|=1\}}+\frac{F_1(y)}{|y|^{2+\alpha}}\Big)\text{~~for~~}y\in\mathbb{Z}^2\setminus\{0\}
\end{equation}
and $p(0)=0$, where $c_1$ is the normalizing constant and $k_1$ is a positive constant, which depends on $\alpha$, $r$, and $M$, defined as
\begin{equation}\label{kdim1}
	k_1=\frac{1}{4}\Big(\int_{B_M}\frac{1}{|y|^\alpha}dy-\!\!\!\!\sum_{y\in A_{r,M}^{::}\cup\partial A_{r,M}^{::}}\frac{F_1(y)}{|y|^\alpha}\Big).
\end{equation}
We choose $r$ and $M$ in order to have $k_1>0$; see Lemma \ref{knpositive}. Then the probability~\eqref{probdim1} is well defined. 

In order to obtain the desired convergence of the Green's function, instead of working with a pure truncated $\alpha$-stable random walk, we add a simple random walk multiplied by a carefully chosen constant. This is the constant $k_1$ defined in~\eqref{kdim1}. This strategy leads us to cancellations that allow us to obtain convergence of order $n^{-\alpha}$ as Theorem \ref{GMntoGM} states. In particular, the convergence stated in Theorem \ref{GMntoGM} does not hold without this correction.

Now we run in $\mathbb{Z}^2$ our sandpile model starting with initial distribution $\rho$. When time goes to infinity, in Proposition \ref{existence-odometer} below we will prove that if every full site is toppled infinitely often, then the mass converges to a limiting distribution on which each site has mass taking values between $0$ and $1$.

Two individual topplings in the truncated $\alpha$-stable sandpile do not commute, but the sandpile is \textit{Abelian} in the sense that the limiting distribution does not depend on the ordering of topplings. This is the subject of Proposition \ref{abelianprop}.

In the truncated $\alpha$-stable divisible sandpile, as in the classical sandpile model studied in \cite{Levine1} and \cite{Levine2}, we use the \textit{odometer function} as the fundamental tool to identify the limiting mass distribution. The odometer function measures the total amount of mass emitted from the point $x\in\mathbb{Z}^2$, counted with repetitions:
\begin{equation}\label{odometerdim1}
	u(x)=\text{total mass emitted from } x.
\end{equation}

Also as a consequence of the Abelian property, the odometer function does not depend on the order of the topplings. The total mass received by site $x$ from other lattice points throughout the toppling procedure is equal to 
$$	\sum_{y\in B_M^{::}\cup\partial B_M^{::}}, \!\!\!\!\!\!p(y)u(x+y),$$
hence,
\begin{equation}\label{L1(u)}
	L_{M,1} u(x)=\nu(x)-\rho(x),
\end{equation}
where $\rho$ and $\nu$ are, respectively, the initial and final distribution of mass, and $L_{M,1}$ is the discrete Laplacian related to $p$ given by
\begin{equation}\label{lapldim1}
	L_{M,1}f(x)= \!\!\!\!\!\!\sum_{y\in B_M^{::}\cup\partial B_M^{::}}  \!\!\!\!\!\! p(y)(f(x+y)-f(x)) 
\end{equation}
for a function $f$ in $\mathbb{Z}^2$.

We say that a function $f$ is superharmonic at a point $x\in\mathbb{Z}^2$ with respect to $L_{M,1}$ if $L_{M,1}f(x)\leq0$.  

To study the limiting distribution we will use the following approach. First we construct a function $\gamma$ on $\mathbb{Z}^2$ whose discrete Laplacian is equal to $\rho-1$; this will be done in Section \ref{obstacle-function}. Assuming we have this function, we consider its least superharmonic majorant
\begin{equation}\label{majorantdim1}
	s(x)=\inf\{f(x);L_{M,1}f\leq0 \text{ on } \mathbb{Z}^2 \text{ and } f\geq\gamma\}.
\end{equation}
The majorant $s$ is the solution of the discrete obstacle problem for the operator $L_{M,1}$ with obstacle $\gamma$.

The following lemma gives us an expression for the odometer.
\begin{lemma}\label{Odmeqdim1}
	Let $\rho$ be a nonnegative bounded function on $\mathbb{Z}^2$ with finite support. Then the odometer function for the truncated $\alpha$-stable sandpile started with mass $\rho$ satisfies 
	$$u=s-\gamma,$$
	where $\gamma$ satisfies $L_{M,1}\gamma=\rho-1$ and $s$ given by \eqref{majorantdim1} is the least superharmonic majorant of $\gamma$.  
\end{lemma}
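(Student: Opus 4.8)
The plan is to follow the classical strategy of Levine and Peres, adapting it to the operator $L_{M,1}$. The key object is the final mass configuration $\nu = \rho + L_{M,1} u$ from \eqref{L1(u)}, which satisfies $\nu \leq 1$ everywhere (by Proposition \ref{existence-odometer}), together with the two complementarity relations: $u \geq 0$ everywhere, and $\nu(x) = 1$ at every site $x$ with $u(x) > 0$ (a site that emits mass is toppled, hence ends at mass exactly $1$; this too is part of the content of Proposition \ref{existence-odometer}). Setting $\sigma := \gamma + u$, I would first show that $\sigma$ is a superharmonic majorant of $\gamma$: indeed $u \geq 0$ gives $\sigma \geq \gamma$, and $L_{M,1}\sigma = L_{M,1}\gamma + L_{M,1}u = (\rho - 1) + (\nu - \rho) = \nu - 1 \leq 0$. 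Hence $\sigma \geq s$, i.e. $u \geq s - \gamma$.

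For the reverse inequality I would argue that $\sigma$ is in fact the \emph{least} superharmonic majorant, equivalently $u \leq f - \gamma$ for every superharmonic $f \geq \gamma$. The standard device is to consider, for such an $f$, the function $w := f - \gamma - u = f - \sigma$ and show $w \geq 0$. On the coincidence set $\{u = 0\}$ we have $w = f - \gamma \geq 0$ directly. On the noncoincidence set $\{u > 0\}$ we have $L_{M,1}\sigma = \nu - 1 = 0$, so $L_{M,1} w = L_{M,1} f - L_{M,1}\sigma = L_{M,1} f \leq 0$ there; thus $w$ is superharmonic on $\{u>0\}$, nonnegative on its complement, and (since $\rho$ has finite support, $u$ has finite support by Proposition \ref{existence-odometer}, so $w = f - \gamma \to$ something controlled at infinity — more precisely $w$ is bounded below off a finite set) a minimum principle for $L_{M,1}$ forces $w \geq 0$ on all of $\bb{Z}^2$. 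This gives $u \leq f - \gamma$, and taking the infimum over admissible $f$ yields $u \leq s - \gamma$, completing the proof.

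The main obstacle I anticipate is making the minimum-principle step rigorous for the nonlocal operator $L_{M,1}$: unlike the nearest-neighbor Laplacian, $L_{M,1}$ averages over the whole ball $B_M^{::} \cup \partial B_M^{::}$, so the usual "a superharmonic function attains its minimum on the boundary" argument needs the observation that $\{u > 0\}$ is a finite set and that on its complement $w$ agrees with $f - \gamma \geq 0$; one then checks that if $w$ had a negative infimum it would be attained at some site $x_0$ with $u(x_0) > 0$, and superharmonicity $L_{M,1}w(x_0) \leq 0$ combined with $w(x_0) \leq w(y)$ for all $y$ in the averaging window forces $w$ to be constant on that window, and propagating this eventually reaches a site where $w \geq 0$, a contradiction. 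A secondary point to be careful about is the behavior of $\gamma$ and $f$ at infinity (the least superharmonic majorant in \eqref{majorantdim1} is an infimum over a possibly delicate class); here I would lean on the construction of $\gamma$ in Section \ref{obstacle-function}, which presumably gives $\gamma$ with controlled growth, so that $s - \gamma$ is well defined and the comparison arguments are legitimate.
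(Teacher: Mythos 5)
Your proposal is correct and follows essentially the same route as the paper: show $\gamma+u$ is a superharmonic majorant of $\gamma$ (giving $u\geq s-\gamma$), then for any superharmonic $f\geq\gamma$ apply a maximum principle to $f-\gamma-u$, which is superharmonic on the fully occupied set and nonnegative off it. The paper phrases the second step with $D=\{\nu=1\}$ rather than $\{u>0\}$ and leaves the nonlocal maximum principle implicit, but your extra care there is just a more detailed version of the same argument.
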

\begin{proof} 
	The odometer function is nonnegative, thus $u+\gamma\geq\gamma$. Since $\nu\leq 1$, by \eqref{L1(u)} we have $L_{M,1}(u+\gamma)\leq0$, hence, $s\leq u+\gamma$ by \eqref{majorantdim1}.\\
	For the converse inequality let $f\geq\gamma$ be any superharmonic function lying above $\gamma$, then $L_{M,1}(f-\gamma-u)\leq0$ in the domain $D=\{x\in\mathbb{Z}^2;\nu(x)=1\}$ of fully occupied sites. Outside $D$, since $u$ vanishes $f-\gamma-u\geq0$ and, hence, using a maximum principle for $L_{M,1}$, we conclude that it is nonnegative everywhere. 
\end{proof}
The expression obtained for $u$ in Lemma \ref{Odmeqdim1} is called the \textit{odometer equation}. Lemma \ref{Odmeqdim1} gives us a way to find the odometer function once we have a function $\gamma$ which satisfies $L_{M,1}\gamma=\rho-1$. This function is called the obstacle function. 

Consider the random walk $\{S_m\}_{m\geq0}$ starting at 0 whose jump probability is given by $p$; we define its Green's function $G_{M,1}$ as
\begin{equation}\label{GM1}
	G_{M,1}(x)=\lim_{N\rightarrow\infty}\sum_{m=0}^N [p^m(0)-p^m(x)],
\end{equation}
where $p^m(x)=\mathbb P[S_m=x]$.

Sometimes in literature the function $G_{M,1}$ defined in \eqref{GM1} is called the potential kernel; see, for instance, \cite{LawlerIntersections} or \cite{LawlerLimic}. Using the Markov property we can check that 
$$L_{M,1}G_{M,1}(x)=0 \text{~~~for~~~} x\neq 0 \text{~~~and~~~} L_{M,1}G_{M,1}(0)=-1.$$

Also notice that, by symmetry of the jump probability $p$, we have 
\begin{equation}\label{Lquadra}
	L_{M,1}|x|^2= \!\!\!\!\!\!\sum_{y\in B_M^{::}\cup\partial B_M^{::}}  \!\!\!\!\!\! p(y)(2(x\cdot y)+|y|^2)= \!\!\!\!\!\!\sum_{y\in B_M^{::}\cup\partial B_M^{::}}  \!\!\!\!\!\! p(y)|y|^2.
\end{equation}
Notice that the term in the right hand side of the expression above represents the variance of the random walk with transition probability $p$. Then we write
\begin{equation}\label{sigma1}
	\sigma_{M,1}^2=L_{M,1}|x|^2.
\end{equation}
Then the obstacle function $\gamma$ can, for instance, be equal to
\begin{equation}\label{obstacledim1}
	\gamma(x)=-\frac{|x|^2}{\sigma_{M,1}^2}-\sum_{y\in\mathbb{Z}^2}G_{M,1}(x-y)\rho(y) \text{~for all~}x\in\mathbb{Z}^2.
\end{equation}

\subsection{Scaling procedure}

We will introduce a  notation for the transition from the Euclidean space to the rescaled lattice $\frac{1}{n}\mathbb{Z}^2$. 

For $x\in\mathbb{R}^2$ we write $x^{n,::}$ for the nearest point in $\frac{1}{n}\mathbb{Z}^2$ breaking ties to the right, that is $x^{n,::}=(x-\frac{1}{2n},x+\frac{1}{2n}]^2\cap\frac{1}{n}\mathbb{Z}^2$.

For $x\in\frac{1}{n}\mathbb{Z}^2$, we write $x^{n,\square}=x+[-\frac{1}{2n},\frac{1}{2n}]^2$.

For a function $f$ in $\mathbb{R}^2$, we write $f^{n,::}$ for its restriction to the lattice, that is, $f^{n,::}=f|_{\frac{1}{n}\mathbb{Z}^2}$.

For a function $f$ in $\frac{1}{n}\mathbb{Z}^2$, we write $f^{n,\square}$ for its extension to the Euclidean space, defined as $f^{n,\square}(x)=f(x^{n,::})$.

For a domain $A\subset\mathbb{R}^2$, we write $A^{n,::}=A\cap\frac{1}{n}\mathbb{Z}^2$. 

If $A$ is a domain in $\frac{1}{n}\mathbb{Z}^2$, we write $A^{n,\square}=A+[-\frac{1}{2n},\frac{1}{2n}]^2$.

For a domain $A$ in $\mathbb{R}^2$, we define its discrete boundary with respect to the rescaled lattice $\frac{1}{n}\mathbb{Z}^2$ as
$$\partial A^{n,::}=\{x\in\tfrac{1}{n}\mathbb{Z}^2;x^{n,\square} ~/\!\!\!\!\!\!\subset A\text{~and~}x^{n,\square}\cap A\neq\emptyset\}.$$

In the above notations, the index $n$ will be omitted whenever it is clear from the context that we are making reference to the rescaled lattice $\frac{1}{n}\mathbb{Z}^2$.

Recall the notation $A_{\frac{r}{n},M}=B_M\setminus B_{\frac{r}{n}}$. We run our sandpile model in $\frac{1}{n}\mathbb{Z}^2$ distributing the excess mass at each toppling, proportionally to the step distribution of the random walk with jump probability $p_n:\frac{1}{n}\mathbb Z^2\to [0,1]$, given by
\begin{equation}\label{probdimn}
	p_n(y)=c_n\left(k_n\textbf{1}_{\{y\in \tfrac{1}{n}\mathbb{Z}^2,|y|=\tfrac{1}{n}\}}+\frac{F_n(y)}{n^{2+\alpha}|y|^{2+\alpha}}\right)\text{~~for~~}y\in\frac{1}{n}\mathbb Z^2\setminus\{0\}
\end{equation}
and $p_n(0)=0$, where the function $F_n$ is defined as
\begin{equation}\label{FnDef}
	F_n(y)=n^2\times\mu(y^\square\cap A_{\frac{r}{n},M}) \text{~for~}y\in\frac{1}{n}\mathbb{Z}^2.
\end{equation}
Recall that $\mu$ denotes the Lebesgue measure in $\mathbb{R}^2$. Notice that $F_n$ is equal to $1$ in $A_{\frac{r}{n},M}^{::}\setminus\partial A_{\frac{r}{n},M}^{::}$, and equal to $0$ outside $A_{\frac{r}{n},M}^{::}\cup\partial A_{\frac{r}{n},M}^{::}$.

The constant $c_n$ is the normalizing constant, and the constant $k_n$ is equal to
\begin{equation}\label{kn}
	k_n=\frac{n^{2-\alpha}}{4}\left(\int_{B_M}\frac{1}{|y|^\alpha}dy-\frac{1}{n^2}\sum_{y\in B_M^{::}\cup\partial B_M^{::}}\frac{F_n(y)}{|y|^\alpha}\right).
\end{equation}
As we mentioned before, we choose $r$ and $M$ in order to ensure that $k_1$ defined in \eqref{kdim1} is positive. The existence of such parameters is the subject of Lemma \ref{knpositive}. Moreover, as a consequence of the proof of Lemma \ref{knpositive} we also have that $k_n$ is also positive for the same choice of the parameters $r$ and $M$. In Lemmas \ref{knpositive} and \ref{cn-to-c} we prove that $k_n$ and $c_n$ are converging to positive constants. Note that the truncation remains macroscopically constant.

For a function $f$ defined in $\frac{1}{n}\mathbb Z^2$, define the discrete, truncated fractional Laplacian as 
\begin{equation}\label{lapldimn}
	L_{M,n}f(x)=n^\alpha\!\!\!\!\sum_{y\in B_M^{n,::}\cup\partial B_M^{n,::}}p_n(y)(f(x+y)-f(x)).
\end{equation}
Note that we can also write $L_{M,n}$ as 
\begin{equation}\label{Ln=Delta+Lhat}
	L_{M,n}=\frac{4c_nk_n\Delta_n}{n^{2-\alpha}}+c_n\widehat{L}_{M,n},
\end{equation}
where 
\begin{eqnarray*}
	\Delta_nf(x)&=&\frac{n^2}{4}\sum_{y\in\frac{1}{n}\bb{Z}^2, |y|=\frac{1}{n}}f(x+y)-f(x)\\
	\text{and~~}\widehat{L}_{M,n}f(x)&=&\frac{1}{n^2}\sum_{y\in B_M^{n,::}\cup\partial B_M^{n,::}}\frac{F_n(y)(f(x+y)-f(x))}{|y|^{2+\alpha}}
\end{eqnarray*}
for a function $f$ defined on $\frac{1}{n}\bb{Z}^2$. In Lemma \ref{LntoL} we prove the convergence of the discrete operator $L_{M,n}$.

Suppose we start the sandpile in $\frac{1}{n}\mathbb{Z}^2$ with initial distribution given by the function $\rho_n=\rho^{n,::}$. Similarly as we did in \eqref{obstacledim1}, we define the obstacle $\gamma_n$ as a function satisfying 
\begin{equation}\label{obstaclecondition}
	L_{M,n}\gamma_n(x)=\rho_n(x)-1.
\end{equation}
We take as the obstacle function any function that satisfies \eqref{obstaclecondition}. The obstacle function can be defined, for instance, as
\begin{equation}\label{obst-n-previa}
	\gamma_n(x)=-\frac{|x|^2}{\sigma_{M,n}^2}-\frac{1}{n^2}\sum_{y\in\frac{1}{n}\mathbb{Z}^2}G_{M,n}(x-y)\rho_n(y),
\end{equation}
where, similarly as we pointed in \eqref{Lquadra} and \eqref{sigma1}, we have that $\sigma_{M,n}^2=L_{M,n}|x|^2$ is the variance of $p_n$, and $G_{M,n}$ is a function which satisfies $L_{M,n}G_{M,n}=-n^2\delta_0$. We call $G_{M,n}$ the discrete Green's function.

In Section \ref{obstacle-function} we discuss how we define the Green's function $G_{M,n}$ and give an integral expression for it, that we anticipate here without proofs. All the details can be found in Section \ref{obstacle-function}. For a fixed $x_0\in\mathbb{R}^2$ with $|x_0|=1$, define
\begin{equation}\label{greenformuladimn}
	G_{M,n}(x)=\frac{1}{(2\pi)^2}\int_{[-n\pi,n\pi]^2}\frac{\cos(\theta\cdot x)-\cos(\theta\cdot x_0)}{\psi_{M,n}(\theta)}d\theta,
\end{equation}
where 
\begin{equation}\label{psiMn}
	\psi_{M,n}(\theta)=n^\alpha\!\!\!\!\!\!\sum_{y\in B_M^{::}\cup\partial B_M^{::}} \!\!\!\!\!\!p_n(y)(1-\cos(\theta\cdot y)).
\end{equation}
The least superharmonic majorant for the obstacle $\gamma_n$ is a function $s_n$ in $\frac{1}{n}\mathbb{Z}^2$ defined as
\begin{equation}\label{majorantdimn}
	s_n(x)=\inf{\{f(x); L_{M,n}f\leq0 \text{~~on~~}\tfrac{1}{n}\mathbb{Z}^2 \text{~~and~~} f\geq\gamma_n\}}.
\end{equation}
By analogy with \eqref{odometerdim1} we define the odometer function starting with mass $\rho_n$ in $\frac{1}{n}\mathbb{Z}^2$ as 
\begin{equation}\label{odometerdimn}
	u_n(x):=\frac{1}{n^2}\times\text{~total amount of mass emitted from the site $x$}.
\end{equation}
It is easy to see that the odometer equation, stated in \ref{Odmeqdim1}, also holds, that is,
\begin{equation}\label{odmeqdimn}
	u_n=s_n-\gamma_n,
\end{equation}
and, in the same way as \eqref{L1(u)}, we have that the final distribution of mass $\nu_n$ starting the sandpile with initial distribution of mass $\rho_n$ is given by
\begin{equation}\label{Ln(u)}
	\nu_n=\rho_n+L_{M,n}u_n.
\end{equation}

\subsection{Main results}
\label{sec:main}

The odometer equation \eqref{odmeqdimn} allows us to formulate our problem in a way that translates naturally to the continuum. Let $c$  be defined as $c:=\lim_{n\rightarrow\infty} c_n$, where $c_n$ is the normalizing constant in \eqref{probdimn}; in section \ref{analytical-estimates} we will prove that this limit exists and the convergence is of the order of $n^{-\alpha}$.

The continuous counterpart of the discrete, truncated fractional Laplacian  $L_{M,n}$ defined in \eqref{lapldimn} is the truncated fractional Laplacian whose integral form is given by
\begin{equation}\label{truncfractlapl}
	L_Mf(x)=\frac{c}{2}\int_{B_M}\frac{f(x+y)+f(x-y)-2f(x)}{|y|^{2+\alpha}}dy,
\end{equation}
defined for a twice differentiable, bounded function $f$.

In Proposition \ref{LntoL}, it becomes clear why $L_M$ is the natural extension of $L_{M,n}$ to the continuum. 

Let us fix a function $\rho$ in $\mathbb{R}^2$ with compact support, which represents the initial mass density. We define the continuous obstacle function as
\begin{equation}\label{obstaclecontinuous}
	\gamma(x)=-\frac{|x|^2}{\sigma_M^2}-\int_{\mathbb{R}^2} G_M(x-y)\rho(y)dy,
\end{equation}
where $\sigma_M^2=c\int_{B_M}\frac{1}{|y|^\alpha}dy$, and $G_M$ is the Green's function of the truncated fractional Laplacian defined as
\begin{equation}\label{greenformulacont}
	G_M(x)=\frac{1}{(2\pi)^2}\int_{\mathbb{R}^2}\frac{\cos(\theta\cdot x)-\cos(\theta\cdot x_0)}{\psi_M(\theta)}d\theta,
\end{equation}
where 
\begin{equation}\label{psiM}
	\psi_M(\theta)=c\int_{B_M}\frac{1-\cos(\theta\cdot y)}{|y|^{2+\alpha}}dy
\end{equation}
and $x_0\in\mathbb{R}^2$ is arbitrary but fixed.

The idea behind the definition of the function $G_M$ as the Green's function of the truncated fractional Laplacian $L_M$ is that $G_M$ is, in some sense, the inverse of the operator $L_M$. Such a property will be discussed in Lemma \ref{inversion-smooth-lemma}. Also, as we will discuss later in \ref{estimateGM}, the Green's function $G_M$ is an interpolation between the Green's function of the fractional Laplacian and the Green's function of the ordinary Laplacian.

We say that a function $f$ on $C^2$ is \emph{superharmonic} with respect to $L_M$ if $L_Mf\leq0$. Since we will often be working with functions that are not twice differentiable, it is convenient to define the truncated fractional Laplacian in a more general setting.

We extend $L_M$ by duality to a large class of distributions. A function $f\in L_{loc}^1$ is a linear functional in the dual of the space of infinitely differentiable functions with compact support $C_c^\infty$. The symmetry of the operator $L_M$ allows us to define, for $f\in L_{loc}^1$ 
$$\langle L_Mf,\phi\rangle=\langle f,L_M\phi\rangle,$$
where $\phi\in C_c^\infty$. This definition coincides with the previous one in the case $f\in C^2$.

We say a function $f\in L_{loc}^1(\mathbb{R}^2)$ is superharmonic in an open set $\Omega$ if for every nonnegative test function $\phi\in C_c^\infty(\mathbb{R}^2)$ whose support is inside $\Omega$, we have 
\begin{equation}\label{def-alter-superh}
	\langle f,L_M\phi\rangle\leq0.
\end{equation}

Throughout the text, superharmonic function will always mean superharmonic with respect to $L_M$. 

Now we consider the least superharmonic majorant
\begin{equation}\label{majorantcontinuous}
	s(x)=\inf\{f(x); f \text{~is continuous, superharmonic and~} f\geq\gamma\}.
\end{equation}
The majorant $s$ is the solution of the obstacle problem for the truncated fractional Laplacian with obstacle $\gamma$. 

The odometer function for $\rho$ is given by 
\begin{equation}\label{odometercontinuous}
	u=s-\gamma.
\end{equation}

We say that a sequence of functions $f_n\in L_{loc}^1$ converges to a distribution $T$ in the weak-$*$ topology if for any test function $\phi\in C_c^\infty$ we have
$$\lim_{n\rightarrow\infty}\langle f_n,\phi\rangle=\langle T,\phi\rangle.$$

Our main result states the uniform convergence of the odometer functions and the convergence of the final distribution of the truncated $\alpha$-stable sandpile with respect to the weak-$*$ topology.

We denote by $C_c^2(\mathbb{R}^2)$ the space of the twice differentiable functions with compact support.

\begin{theorem}\label{convergence-distribution}
	Let $\alpha\in(1,2)$. Consider $\rho\in C_c^2(\mathbb{R}^2)$ as the initial density of the truncated $\alpha$-stable sandpile. Let $\nu_n$ be the final distribution of the sandpile in $\frac{1}{n}\mathbb{Z}^2$ starting with initial distribution $\rho_n=\rho^{::}$. Then 
	$$\nu_n^\square\rightarrow \rho+L_Mu$$ 
	in the weak-$*$ topology, where $u$ is the limiting odometer function defined in \eqref{odometercontinuous}.
\end{theorem}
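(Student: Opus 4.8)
The plan is to pass to the limit in the discrete odometer identity \eqref{Ln(u)}, that is $\nu_n=\rho_n+L_{M,n}u_n$. Fix a test function $\phi\in C_c^\infty(\bb R^2)$. Writing each pairing $\langle f_n^\square,\phi\rangle$ as a sum of integrals over the cells $x^{n,\square}$ and Taylor expanding $\phi$ on each cell (the first-order term integrates to zero by symmetry of the cell), one gets $\langle \nu_n^\square,\phi\rangle=\tfrac1{n^2}\sum_{x}\rho_n(x)\phi(x)+\tfrac1{n^2}\sum_{x}(L_{M,n}u_n)(x)\,\phi(x)+O(n^{-2})$, the sums running over $\tfrac1n\bb Z^2$. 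The first sum converges to $\langle\rho,\phi\rangle$. For the second, the symmetry of $p_n$ makes $L_{M,n}$ self-adjoint for the counting measure on $\tfrac1n\bb Z^2$, and since $\phi$ has compact support while $u_n$ is supported in a fixed ball (see below) there are no boundary terms: $\tfrac1{n^2}\sum_{x}(L_{M,n}u_n)(x)\,\phi(x)=\tfrac1{n^2}\sum_{x}u_n(x)\,L_{M,n}\phi^{::}(x)$. By Proposition \ref{LntoL}, $L_{M,n}\phi^{::}\to L_M\phi$ uniformly on compacts. Hence, once we know that $u_n^\square\to u$, say in $L^1_{loc}(\bb R^2)$ with supports contained in a fixed compact set, the second sum converges to $\int u\,L_M\phi=\langle u,L_M\phi\rangle=\langle L_Mu,\phi\rangle$, and therefore $\langle\nu_n^\square,\phi\rangle\to\langle\rho+L_Mu,\phi\rangle$, which is the assertion.

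Thus the theorem reduces to proving $u_n^\square\to u$. First come a priori bounds: since $\rho\in C_c^2$, the total mass is finite, and comparing the odometer with a superharmonic barrier built from $G_{M,n}$ (exactly as in \cite{Levine1}) confines the set of fully occupied sites, and hence $\supp u_n$, to a ball $B_\Lambda$ with $\Lambda$ independent of $n$, while $0\le u_n\le C$ uniformly. By the odometer equation \eqref{odmeqdimn}, $u_n=s_n-\gamma_n$; and Theorem \ref{GMntoGM}, through the representations \eqref{obst-n-previa} and \eqref{obstaclecontinuous}, gives $\gamma_n^\square\to\gamma$ uniformly on compacts, at rate $n^{-\alpha}$. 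Therefore the convergence of $u_n$ is equivalent to the convergence of the least superharmonic majorants $s_n^\square$ to $s$.

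I would identify $\lim_n s_n^\square$ with $s$ by a two-sided comparison. The family $\{s_n^\square\}$ is uniformly bounded on compacts, and from the obstacle structure ($L_{M,n}s_n\le 0$ everywhere, $L_{M,n}s_n=0$ on $\{u_n>0\}$, $s_n=\gamma_n$ on $\{u_n=0\}$) together with the uniform $C^2$-type bounds on $\gamma_n$ one extracts a uniform local modulus of continuity for $u_n$, hence for $s_n$; so $\{s_n^\square\}$ is precompact for locally uniform convergence. Let $\bar s$ be a subsequential limit. Using the summation-by-parts identity above and Proposition \ref{LntoL}, the inequality $L_{M,n}s_n\le 0$ passes to the limit as $\langle\bar s,L_M\phi\rangle\le 0$ for every nonnegative $\phi\in C_c^\infty$, i.e. $\bar s$ is superharmonic in the sense of \eqref{def-alter-superh}; moreover $\bar s\ge\gamma$ because $s_n\ge\gamma_n$. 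By the definition \eqref{majorantcontinuous} of $s$ this forces $\bar s\ge s$, so $\liminf_n s_n^\square\ge s$. For the reverse inequality, let $f$ be continuous, superharmonic and $\geq\gamma$; a mollification produces $f^\varepsilon\in C^2$ with $f^\varepsilon\ge\gamma-\delta(\varepsilon)$ and $L_Mf^\varepsilon\le\delta(\varepsilon)$, $\delta(\varepsilon)\to0$, and by Proposition \ref{LntoL} its restriction satisfies $L_{M,n}(f^\varepsilon)^{::}\le\delta(\varepsilon)+o_n(1)$; adding a small explicit discrete superharmonic correction yields a genuine discrete superharmonic function above $\gamma_n$, whence $s_n\le f^\varepsilon+o(1)$ and $\limsup_n s_n^\square\le f$; taking the infimum over such $f$ gives $\limsup_n s_n^\square\le s$. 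Combining, $s_n^\square\to s$ locally uniformly, hence $u_n^\square\to u$, and the proof is complete.

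The main obstacle is this identification step. On one hand it requires a modulus of continuity for the discrete obstacle solutions $u_n$ that is uniform in $n$ — which is where the a priori support bound and the quantitative Green's-function expansion of Theorem \ref{GMntoGM} are essential; on the other hand the mollification-and-correction argument for the upper bound has to be compatible with the nonlocal, truncated operator $L_M$, and in particular one must verify that neither the truncation at radius $M$ nor the nearest-neighbour correction $k_n\textbf{1}_{\{y\in\frac1n\bb Z^2,\,|y|=1/n\}}$ in \eqref{probdimn} destroys the uniform convergence $L_{M,n}\phi^{::}\to L_M\phi$ on the scales that matter. The remaining ingredients — the summation by parts, the limit in the linear identity, and the reduction to $s_n\to s$ — are routine once Proposition \ref{LntoL} and Theorem \ref{GMntoGM} are available.
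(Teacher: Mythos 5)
Your overall architecture coincides with the paper's: pass to the limit in $\nu_n=\rho_n+L_{M,n}u_n$ by summation by parts (the paper writes $\rho_n=-L_{M,n}(G_{M,n}*\rho_n)$ first, but that is cosmetic), reduce everything to $u_n^\square\to u$, hence to $\gamma_n^\square\to\gamma$ (Proposition \ref{obstacle-convergence}) and $s_n^\square\to s$, and your upper bound $\limsup_n s_n^\square\le s$ by mollification plus the $n^{\alpha-2}|x|^2$ discrete superharmonic correction is exactly the paper's argument in Proposition \ref{majorant-convergence}. The support and boundedness statements you invoke are also available (Lemma \ref{suppofodom}).

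The genuine gap is in your lower bound $\liminf_n s_n^\square\ge s$. You extract subsequential locally uniform limits of $s_n^\square$ from ``a uniform local modulus of continuity for $u_n$, hence for $s_n$,'' which you assert ``one extracts from the obstacle structure.'' No such equicontinuity estimate is proved anywhere, and it is not routine: for the nonlocal, truncated operator $L_{M,n}$ with the $n$-dependent nearest-neighbour correction, a modulus of continuity for the discrete obstacle solution that is uniform in the lattice spacing is precisely the kind of regularity statement one would have to work hard for, and the paper deliberately avoids it. Instead, the paper's Proposition \ref{majorant-convergence} proves the lower bound by an explicit construction: it sets $\phi_n=-L_{M,n}(s_n\omega)$ for a cutoff $\omega$, shows $\phi_n$ is uniformly bounded and nonnegative on the relevant set (Lemma \ref{phibounded}), shows $s_n^\square$ differs from $G_M*(\phi_n)^\square$ by $O(n^{1-\alpha})$ (Lemma \ref{phi-aprox-sn}, using Theorem \ref{GMntoGM}), and then observes that $G_M*(\phi_n)^\square$ is a \emph{continuous, superharmonic} function (Lemma \ref{inver-util}) lying above $\gamma-2\epsilon$, hence above $s-2\epsilon$ by the very definition \eqref{majorantcontinuous}. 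This gives $s_n^\square\ge s-3\epsilon$ with no compactness or regularity of the $s_n$ needed. Unless you supply the uniform modulus of continuity (or replace that step by a construction of this type), your identification of the limit of $s_n^\square$ with $s$ is incomplete, and with it the convergence $u_n^\square\to u$ on which the whole theorem rests. A minor additional remark: the paper also verifies at the end that the weak-$*$ limit $\rho+L_Mu$ is an actual $L^\infty$ function (via Riesz--Markov and $0\le\nu_n\le1$), a point your write-up does not address.
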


Theorem \ref{convergence-distribution} is a consequence of Theorem \ref{odoconvergence} bellow.

\begin{theorem}\label{odoconvergence}
	Let $\alpha\in(1,2)$. Consider $\rho\in C_c^2(\mathbb{R}^2)$ as the initial density of the truncated $\alpha$-stable sandpile. Starting with an initial distribution $\rho_n=\rho^{::}$ we consider the discrete odometer function $u_n$ defined in  \eqref{odometerdimn}. Let $u$ be the continuous odometer function defined \eqref{majorantcontinuous} in and \eqref{odometercontinuous}. Then
	$$u_n^\square\rightarrow u$$
	uniformly as $n$ goes to infinity.
\end{theorem}

As we mentioned before, we obtained the following convergence result for the Green's functions. 
\begin{theorem}\label{GMntoGM}
	Let $G_{M,n}$ and $G_M$ be, respectively, the rescaled discrete Green's function and the continuous Green's function defined in \eqref{greenformuladimn} and in \eqref{greenformulacont}. Then there exists a constant $K>0$ which only depends on $\alpha$, such that for all $x\neq0$ and all $n$, it holds 
	\begin{equation}\label{GMntoGMineq}
		\Big|G_M(x)-G_{M,n}(x)+\beta_n\log|x|\Big|\leq\frac{K}{n^\alpha}\left(1+\frac{1}{|x|^{2-\alpha}}+\frac{1}{|x|^2}\right),
	\end{equation}
	where $\beta_n$ is an explicitly computable constant (given in terms of the probability jumps) which converges to zero at rate $n^{-\alpha}$.\\
	In particular, $G_{M,n}$ converges towards $G_M$ uniformly on compacts of $\mathbb{R}^2\setminus \{0\}$. 
\end{theorem}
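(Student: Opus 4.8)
The plan is to compare the two integral representations \eqref{greenformuladimn} and \eqref{greenformulacont} directly. Write
$$G_M(x)-G_{M,n}(x)=\frac{1}{(2\pi)^2}\int_{[-n\pi,n\pi]^2}\bigl(\cos(\theta\cdot x)-\cos(\theta\cdot x_0)\bigr)\Bigl(\frac{1}{\psi_M(\theta)}-\frac{1}{\psi_{M,n}(\theta)}\Bigr)d\theta + \frac{1}{(2\pi)^2}\int_{\bb{R}^2\setminus[-n\pi,n\pi]^2}\frac{\cos(\theta\cdot x)-\cos(\theta\cdot x_0)}{\psi_M(\theta)}d\theta.$$
The second (tail) term should be controlled by the decay of $1/\psi_M$: since $\psi_M(\theta)\sim c_\alpha|\theta|^\alpha$ for large $|\theta|$, the tail integral over $|\theta|\gtrsim n$ is of order $n^{\alpha-2}$ from the denominator alone, but integrating the oscillatory numerator against it (or against its $|x|$-dependence) should gain the extra factor needed to reach $n^{-\alpha}$; this is where the $|x|^{-2}$ and $|x|^{-(2-\alpha)}$ terms in the bound will come from, after splitting according to whether $|\theta||x|\lesssim 1$ or not and using $|\cos(\theta\cdot x)-1|\le\min(2,|\theta|^2|x|^2)$.

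The heart of the matter is the first integral, which requires precise control of $\psi_{M,n}(\theta)-\psi_M(\theta)$ on $[-n\pi,n\pi]^2$. Here is where the correction constant $k_n$ from \eqref{kn} enters: I would expand $\psi_{M,n}(\theta)=n^\alpha\sum_y p_n(y)(1-\cos(\theta\cdot y))$ using \eqref{probdimn}, separating the nearest-neighbor part (which contributes $c_n k_n n^\alpha$ times $\sum_{|y|=1/n}(1-\cos(\theta\cdot y))\approx \frac{c_n k_n}{2}|\theta|^2 n^{\alpha-2}$, i.e. a term of order $|\theta|^2 n^{\alpha-2}$) from the truncated-stable part (a Riemann sum for $c\int_{B_M}\frac{1-\cos(\theta\cdot y)}{|y|^{2+\alpha}}dy=\psi_M(\theta)$ up to lower order). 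The point of the definition \eqref{kn} of $k_n$ is that the leading discretization error of the Riemann sum—which is $O(|\theta|^2 n^{\alpha-2})$ and would otherwise survive—is exactly cancelled by the nearest-neighbor term, leaving an error of the genuinely smaller order $n^{-\alpha}$ (times an appropriate power of $|\theta|$). One must also track $c_n-c$, which by the discussion after \eqref{truncfractlapl} is $O(n^{-\alpha})$, and the logarithmic term $\beta_n\log|x|$ appears because the zero-mode / normalization of the two Green's functions differ at order $n^{-\alpha}$; concretely $\beta_n$ should be read off as the coefficient of the $|\theta|^{-2}$-type singularity discrepancy near $\theta=0$, and standard Fourier analysis of $\int (\cos(\theta\cdot x)-\cos(\theta\cdot x_0))|\theta|^{-2}d\theta$ produces a $\log|x|$.

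Assembling the pieces: I would prove a pointwise estimate of the form $|\psi_{M,n}(\theta)-\psi_M(\theta)-(\text{explicit }\beta_n\text{-term})|\le \frac{K}{n^\alpha}\min(|\theta|^2,|\theta|^\alpha, 1)\cdot(\text{something})$ valid on $[-n\pi,n\pi]^2$, together with the two-sided bounds $\psi_{M,n}(\theta)\gtrsim\min(|\theta|^2,1)$ and $\psi_M(\theta)\gtrsim\min(|\theta|^2,|\theta|^\alpha)$ (the former uses positivity of $k_n$ and is why the correction is needed even for non-degeneracy), then substitute $\frac{1}{\psi_M}-\frac{1}{\psi_{M,n}}=\frac{\psi_{M,n}-\psi_M}{\psi_M\psi_{M,n}}$ and integrate, splitting the $\theta$-integral into the regions $|\theta|\le 1$, $1\le|\theta|\le 1/|x|$ (when $|x|\le1$), and $|\theta|\ge 1/|x|$, in each using the bound $|\cos(\theta\cdot x)-\cos(\theta\cdot x_0)|\le\min(C,C|\theta|^2|x|^2,C|\theta|^2)$ as appropriate. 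The region near $\theta=0$ is what generates the $\beta_n\log|x|$ correction and the "$1$" in the bound; the intermediate region generates $|x|^{-(2-\alpha)}$; and the far region together with the tail integral generates $|x|^{-2}$. Uniform convergence on compact subsets of $\bb{R}^2\setminus\{0\}$ is then immediate since $\beta_n\to0$ and the right-hand side of \eqref{GMntoGMineq} is bounded on such sets. I expect the main obstacle to be the bookkeeping in the Euler–Maclaurin / Riemann-sum error analysis for $\psi_{M,n}-\psi_M$, specifically verifying that the definition \eqref{kn} kills precisely the $O(n^{\alpha-2})$ term and isolating $\beta_n$ cleanly; the integration step afterwards is routine but tedious.
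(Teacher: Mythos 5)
Your high-level architecture matches the paper's: split the $\theta$-integral into a neighborhood of the origin, the rest of the Brillouin zone, and the tail; use the definition \eqref{kn} of $k_n$ to cancel the $O(|\theta|^2 n^{\alpha-2})$ Riemann-sum discretization error (this is exactly Lemmas \ref{psiM-pequeno-estimate} and \ref{psiM-grande-estimate}); and read off $\beta_n\log|x|$ from the $O(n^{-\alpha})$ normalization mismatch (in the paper this is the factor $\tfrac{c}{c_n}$, reinstated at the end via Proposition \ref{estimateGM}). So the skeleton is sound.

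However, the quantitative scheme you propose for $|\theta|\gtrsim 1$ has a genuine gap, and it is precisely where the real work of the proof lives. First, the pointwise estimate you hope for, $|\psi_{M,n}(\theta)-\psi_M(\theta)-(\cdots)|\le K n^{-\alpha}\min(|\theta|^2,|\theta|^\alpha,1)$, is false: on $[-n\pi,n\pi]^2\setminus B_{1/M}$ the true discrepancy is of order $|\theta|^{2+\alpha}/n^2$ (Lemma \ref{psiM-grande-estimate}), whose ratio to $\psi_M(\theta)\sim|\theta|^\alpha$ is $|\theta|^2/n^2$ --- this is $O(1)$, not $O(n^{-\alpha})$, near the edge of the Brillouin zone. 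Consequently, integrating $\bigl|\tfrac{1}{\psi_M}-\tfrac{1}{\psi_{M,n}}\bigr|\sim |\theta|^{2-\alpha}n^{-2}$ in absolute value over $[-n\pi,n\pi]^2$ gives $n^{2-\alpha}\to\infty$, and your $\min(C,C|\theta|^2|x|^2)$ bounds on the cosine difference cannot rescue this. Second, the tail integral over $\bb{R}^2\setminus[-n\pi,n\pi]^2$ does not converge absolutely at all, since $\int_{|\theta|>n}|\theta|^{-\alpha}d\theta=\infty$ for $\alpha<2$. Both problems force one to exploit the oscillation of $\cos(\theta\cdot x)$ genuinely, and the mechanism the paper uses is a double integration by parts: write $\cos(\theta\cdot x)=-|x|^{-2}\Delta_\theta\cos(\theta\cdot x)$ and move the Laplacian onto $\phi_n=\tfrac{c}{c_n\psi_M}-\tfrac{1}{\psi_{M,n}}$, which requires the derivative bounds (i)--(ii) of Lemma \ref{psiM-grande-estimate} (so that $|\Delta\phi_n|\le Cn^{-2}|\theta|^{-\alpha}$, integrating to $Cn^{-\alpha}|x|^{-2}$), and a cancellation of the boundary terms on $\partial[-n\pi,n\pi]^2$ between the middle-region and tail contributions, using the periodicity of $\psi_{M,n}$. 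You gesture at "gaining an extra factor from the oscillatory numerator" but never supply this step; without it the estimate \eqref{GMntoGMineq} cannot be reached by the route you describe.
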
 

\subsection{Organization of the paper}

In this work we will follow the structure 
\begin{eqnarray*}
	\text{convergence of Green's functions}&\Rightarrow&\text{convergence of obstacles}\\
	&\Rightarrow&\text{convergence of majorants}\\
	&\Rightarrow&\text{convergence of odometers}\\
	&\Rightarrow&\text{convergence of final distribution}.
\end{eqnarray*}
 In Section \ref{analytical-estimates} we put together some important analytical estimates that we will use throughout this work. In Section \ref{GREEN} we treat the convergence of the Green's functions, which is the subject of Theorem \ref{GMntoGM}. In Section \ref{convergence-obstacle} we prove the convergence of obstacles as a consequence of Theorem \ref{GMntoGM}. Although we do not use all the information that Theorem \ref{GMntoGM} gives in the proof of the convergence of obstacles, we do need such a strong result for the convergence of majorants. In Section \ref{convergence-odometers} we prove the convergence of majorants and the convergence of the odometer functions stated in Theorem \ref{odoconvergence}. A fundamental tool for Section \ref{convergence-odometers} is the continuity of the majorant \eqref{majorantcontinuous} which is the subject of Section~\ref{CONT}. In Section \ref{final-distribution} we obtain the convergence of the final distribution stated in Theorem~\ref{convergence-distribution}. Proof of various auxiliary results are in Sections \ref{proofs} and \ref{convergence-green-proofs}.

\section{Existence of the odometer function and Abelian property}

In this section we follow the approach presented in \cite{Levine2} to prove the existence of the odometer, the convergence to a final distribution, and that these functions are independent of the sequence of topplings. We only made the necessary changes to handle the long range jumps. 

We start with an initial distribution of mass $\rho$ in $\bb{Z}^2$ with finite support. At each time step we topple a full site, recall that a toppling in the site $x$ consists in leaving mass $1$ to $x$, and distribute the excess mass among the lattice proportionally to the step distribution of the random walk \eqref{probdim1}. 

In this section we prove that if every full site is toppled infinitely often, the mass distribution converges to a limiting distribution on which each site has mass less than or equal to $1$. We also prove the Abelian property which states that the final distribution does not depend on the order of the topplings. 

We define a toppling scheme $T$ as an infinite sequence of indexes in $\bb{Z}^2$ in which each full site that is initially full or becomes full after the realization of the previous toppling appears in the sequence infinitely often. We define in this way because after we topple a full site, it will have mass $1$, but eventually this site will have its mass increased when we perform a toppling in another full site within a radius $M$.

We also say that a toppling scheme is legal if it only topples full sites.

Let $\nu_k^T$ be the mass distribution after the toppling of the first $k$ sites listed in $T$, and $u_k^T(x)$ the mass emitted from $x$ up to the $k$-th toppling. 

\begin{proposition}\label{existence-odometer}
	Suppose we start with an initial configuration $\rho$ with finite total amount of mass and bounded support. Consider a legal toppling scheme $T$ for this initial configuration. Then as $k$ goes to infinity, $\nu^T_k$ and $u^T_k$ tend to limits $\nu$ and $u$. Moreover the limiting configuration $\nu$ satisfies $\nu\leq1$ in $\bb{Z}^2$ and $\nu=\rho+L_{M,1}u$, for $L_{M,1}$ defined in \eqref{lapldim1}.
\end{proposition}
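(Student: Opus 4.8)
The plan is to follow the classical argument of Levine and Peres for the divisible sandpile, adapted to the long-range transition probability $p_1$. First I would establish the key monotonicity and abelian-type bookkeeping. For a legal toppling scheme $T$, track the pair $(\nu_k^T, u_k^T)$. Each toppling of a site $x$ only moves mass away from $x$ (never into the site beyond keeping $1$), so $u_k^T(x)$ is nondecreasing in $k$ for each fixed $x$; what must be shown is that it stays bounded. The natural bound is a mass-conservation argument: the total mass $\sum_x \nu_k^T(x)$ is constant and equal to $\sum_x \rho(x) =: m < \infty$, so mass cannot escape to infinity in finitely many steps, but more is needed to bound the odometer itself.

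The main step is the uniform bound on $u_k^T$. Here I would compare with a stabilizing ``super-odometer'': since $\rho$ has bounded support, say inside a ball $B$, one can exhibit an explicit superharmonic function (for instance built from $|x|^2/\sigma_{M,1}^2$ plus a large multiple of the Green's function $G_{M,1}$, exactly as in the obstacle formulation \eqref{obtacledim1}--\eqref{majorantdim1}) that dominates any legal odometer. Concretely, if $f \ge \gamma$ is superharmonic with $L_{M,1}f \le 0$, then a least-action / abelian comparison shows $u_k^T \le f - \gamma$ pointwise for every $k$ and every legal $T$; this is the discrete analogue of the statement that the odometer is the smallest function whose associated configuration is stabilized, and it follows by inducting on $k$ using that a toppling at $x$ where $u_k^T(x) < (f-\gamma)(x)$ preserves the inequality $u_k^T \le f - \gamma$. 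Granting this, $u_k^T(x) \uparrow u(x)$ for some finite limit $u$, and then $\nu_k^T = \rho + L_{M,1}u_k^T \to \rho + L_{M,1}u =: \nu$ by passing to the limit in the (locally finite) sum defining $L_{M,1}$.

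It remains to check $\nu \le 1$. This is where legality is used: if some site $x$ had $\nu(x) > 1$, then since $\nu_k^T(x) \to \nu(x)$, the site $x$ is full for all large $k$; because $T$ is a toppling scheme, $x$ appears infinitely often in $T$, so it is toppled infinitely often, and each such toppling strictly decreases the mass at $x$ down to $1$ (emitting a fixed positive amount bounded below as long as $\nu(x)$ stays above $1+\epsilon$). Summing these emissions contradicts $u(x) < \infty$. Hence $\nu(x) \le 1$ for all $x$.

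The step I expect to be the main obstacle is the abelian comparison bound $u_k^T \le f - \gamma$ that yields the uniform bound on the odometer: one must verify carefully that toppling at a site can never push the running odometer above the target superharmonic majorant, which requires the discrete maximum principle for $L_{M,1}$ (available since $p_1 \ge 0$ and $\sum p_1 = 1$) together with the fact that the long-range kernel still only redistributes mass among finitely many sites per toppling. Once this is in place, convergence of $\nu_k, u_k$ and the identity $\nu = \rho + L_{M,1}u$ are routine, and $\nu \le 1$ follows from legality as above.
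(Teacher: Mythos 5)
Your proof is correct in outline, but it takes a genuinely different route from the paper's for the key step, the boundedness of the odometer. The paper first confines all activity to a bounded set $K$ (a chain argument: any site that becomes full is connected to $\supp(\rho)$ by at most $m$ full sites, each a jump of length at most $M+1$), and then uses the quadratic weight $W_k=\sum_x\nu_k(x)|x|^2/\sigma_{M,1}^2$. Since $L_{M,1}|x|^2=\sigma_{M,1}^2$, each toppling increases $W_k$ by exactly the mass emitted, so $W_k=W_0+\sum_x u_k(x)$; boundedness of $W_k$ (mass $m$ confined to $K$) then bounds the \emph{total} odometer and gives convergence. You instead invoke a least-action comparison $u_k^T\le f-\gamma$ for any superharmonic $f\ge\gamma$, proved by induction on $k$; this induction is sound (it is literally the same computation the paper uses to prove the Abelian property, Proposition \ref{abelianprop}, with $u^{T_2}$ replaced by $f-\gamma$, and it uses legality through $\nu_{k-1}(x_k)\ge 1$). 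The trade-off: your route presupposes the obstacle $\gamma$ with $L_{M,1}\gamma=\rho-1$, hence the potential kernel $G_{M,1}$, and you must exhibit at least one finite superharmonic majorant — say a large constant, which works because $\gamma(x)\to-\infty$; you should state this explicitly, since otherwise the comparison bound is vacuous. The paper's argument is more elementary and self-contained (no Green's function needed) and yields the stronger conclusion $\sum_x u_k(x)\le W_k-W_0<\infty$, whereas yours delivers the least-action principle, which is the inequality $u\le s-\gamma$ underlying Lemma \ref{Odmeqdim1}. Your argument for $\nu\le1$ is fine and essentially the paper's: both rest on legality plus the fact that a full site appears infinitely often in $T$ (the paper observes $\nu_k(x)\le1$ along toppling times; you derive a contradiction with $u(x)<\infty$ from emissions bounded below). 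The only imprecision worth fixing is the phrase about toppling ``where $u_k^T(x)<(f-\gamma)(x)$'': the induction needs no strictness hypothesis, only $u_{k-1}^T\le f-\gamma$ everywhere together with $L_{M,1}(f-\gamma)\le 1-\rho$.
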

\begin{proof}
	We claim that there exists a bounded set $K$ which contains all the possible sites with positive mass throughout the realization of the topplings. To see that, denote by $S$ the support of the initial distribution $\rho$ and let $m$ be the total amount of mass. If a lattice site $y$ outside $S$ becomes full at a certain instant of the toppling scheme $T$, then there exists a sequence $y_1,y_2,\cdots,y_\ell$ of full sites such that $y_1=y$, $y_\ell\in S$, and $y_{i+1}\in \{y_i\}+B_{M+1}^{1,::}$. Note that, since the mass is preserved, $\ell\leq m$. Then all possible full sites are inside the set $\widetilde{K}:=S+B_{m(M+1)}$. This allows us to conclude that the final set of sites with positive mass is contained in the set $K:=\widetilde{K}+B_{M+1}$. 
	
	Let us consider the function
	$$W_k=\sum_{x\in\bb{Z}^2}\nu^T_k(x)\frac{|x|^2}{\sigma_{M,1}^2},$$
	where $\sigma_{M,1}^2$ was defined in \eqref{sigma1}.
	
	Note that $W_k$ is uniformly bounded.
	
	Assume that $x$ is the $k$-th site to be toppled, then the mass at point $x$ is modified by $\beta_k(x)=\nu^T_{k-1}(x)-\nu^T_k(x)$, and this amount will be transfered to the lattice according to the toppling rule. Hence,
	\begin{eqnarray*}
		W_k(x)-W_{k-1}(x)&=&\sum_{y\in\bb{Z}^2}(\nu^T_k(y)-\nu^T_{k-1}(y))\frac{|y|^2}{\sigma_{M,1}^2}\\
		&=&-\beta_k(x)\frac{|x|^2}{\sigma_{M,1}^2}+\sum_{y\neq x}\beta_k(x)p(y-x)\frac{|y|^2}{\sigma_{M,1}^2}\\
		&=&\beta_k(x)L_{M,1}\frac{|x|^2}{\sigma_{M,1}^2}\\
		&=&\beta_k(x),
	\end{eqnarray*}
	where in the last line we used \eqref{Lquadra} and \eqref{sigma1}. Since $u^T_k$ is the sum up to $k$ of all the relevant $\beta_i(x)$, we get
	$$W_k=W_0+\sum_{x\in\bb{Z}^2}u^T_k(x).$$
	Note that, by definition, the sequence $(u^T_k)_k$ is increasing. Now we also have that the if sequence is bounded, then it converges to a certain function $u^T(x)$.
	
	As we observed in \ref{L1(u)}, the relation $\nu_k^T(x)=\rho(x)+L_{M,1} u_k^T(x)$ holds for all finite time $k$, so the convergence of $\nu_k$ is a consequence of the convergence of $u_k$. Moreover, its limit $\nu^T$ satisfies $\nu^T=\rho+L_{M,1} u^T$.
	
	Finally a point $x$ is either never toppled, in which case we have $\nu_k^T(x)\leq 1$ for all $k$, or it is toppled infinitely often, and then when a toppling occurs at time $k$ at the point $x$ we have $\nu_k^T(x)\leq 1$. In both cases we conclude that the limit $\nu^T$ satisfies $\nu^T(x)\leq 1$.
\end{proof}

The next proposition states the Abelian property for the truncated $\alpha$-stable sandpile. 

\begin{proposition}\label{abelianprop}
	Consider two legal toppling schemes $T_1$ and $T_2$ for an initial configuration $\rho$. For $i=1,2$, let $\nu^{T_i}$ and $u^{T_i}$ be, respectively, the final distribution and the odometer function for the toppling scheme $T_i$.\\
	Then $\nu^{T_1}=\nu^{T_2}$ and $u^{T_1}=u^{T_2}$.
\end{proposition}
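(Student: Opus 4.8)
The plan is to prove the classical "abelian" lemma in the form used for divisible sandpiles: if $T$ and $T'$ are two legal toppling schemes for $\rho$, then the pointwise limit of the odometers coincides, $u^T = u^{T'}$, and then $\nu^T = \nu^{T'}$ follows immediately from the relation $\nu = \rho + L_{M,1}u$ established in Proposition \ref{existence-odometer}. The key monotonicity input is the following elementary observation about single topplings: performing a legal toppling at a site $x$ only increases the odometer at $x$ (by a positive amount) and, since mass is only ever sent outward with non-negative weights $p_1(\cdot)$, it never decreases the mass at any other site that has not yet been toppled relative to what it would have been. More precisely, I would first isolate the statement that topplings are \emph{monotone} in the following sense: if $\sigma$ is a legal toppling operation at $x$ and $u, \tilde u$ are two odometer functions (viewed as instructions "emit $u(y)$ from $y$") with $u \le \tilde u$ pointwise and the configuration produced by $\tilde u$ is everywhere $\le 1$, then applying $\sigma$ to $u$ yields an odometer still $\le \tilde u$.

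The main step is then a standard "abelian" comparison argument. Fix the legal scheme $T_2$, with limiting odometer $u^{T_2}$ and final configuration $\nu^{T_2} \le 1$. I would show by induction on $k$ that $u_k^{T_1} \le u^{T_2}$ for every $k$, where $u_k^{T_1}$ is the odometer after the first $k$ topplings of $T_1$. The base case is trivial since $u_0^{T_1} = 0 \le u^{T_2}$. For the inductive step, suppose $u_k^{T_1} \le u^{T_2}$ and that the $(k+1)$-st toppling of $T_1$ occurs at a site $x$; since $T_1$ is legal, $x$ is full in the configuration $\nu_k^{T_1} = \rho + L_{M,1}u_k^{T_1}$, i.e. $\nu_k^{T_1}(x) \ge 1$. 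Because $u_k^{T_1} \le u^{T_2}$ and $x$ receives mass monotonically in the emitted amounts (here I use that all off-diagonal coefficients of $L_{M,1}$, namely the $p_1(y)$, are non-negative), the "pre-toppling excess" at $x$ under $T_1$ is at most the total amount $x$ could legally be asked to emit under $T_2$; more carefully, since $\nu^{T_2}(x) \le 1$, the site $x$ must have emitted under $T_2$ at least the amount needed to bring it down from $\nu_k^{T_1}(x) \ge 1$, which shows that incrementing $u_k^{T_1}(x)$ by the toppling amount $\nu_k^{T_1}(x) - 1$ keeps it $\le u^{T_2}(x)$, while all other values are unchanged. This gives $u_{k+1}^{T_1} \le u^{T_2}$, completing the induction. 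Letting $k \to \infty$ yields $u^{T_1} \le u^{T_2}$, and by symmetry (exchanging the roles of $T_1$ and $T_2$) also $u^{T_2} \le u^{T_1}$, hence $u^{T_1} = u^{T_2}$. Finally $\nu^{T_1} = \rho + L_{M,1}u^{T_1} = \rho + L_{M,1}u^{T_2} = \nu^{T_2}$.

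The step I expect to be the main obstacle is making the monotonicity/induction bookkeeping fully rigorous: one must be careful that "emitting less so far" at every site really does imply "having received no more" at $x$, which relies on the non-negativity of $p_1$, and one must correctly handle the fact that a single toppling of $T_1$ at $x$ may force $x$ to have been toppled \emph{several} times (or with a large cumulative emission) under $T_2$ — the legality of $T_2$ together with $\nu^{T_2} \le 1$ is exactly what licenses comparing a single $T_1$-toppling increment against the \emph{total} $T_2$-emission. A clean way to organize this is to phrase the induction in terms of the functional $u \mapsto \rho + L_{M,1}u$ and the "least action principle" familiar from the divisible sandpile literature (cf. \cite{Levine1}, \cite{Levine2}): $u^{T_2}$ is the minimal non-negative function whose associated configuration is $\le 1$, and any legal partial toppling sequence produces an odometer dominated by any such stabilizing function. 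I would state and prove that least-action lemma first, and then both the inductive comparison above and the symmetric statement fall out of it directly.
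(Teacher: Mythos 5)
Your proposal is correct and follows essentially the same route as the paper: an induction over the topplings of $T_1$ showing that the partial odometer $u_k^{T_1}$ stays below the full odometer $u^{T_2}$, using the non-negativity of the weights $p_1$ together with the inequality $\nu^{T_2}(x)\leq 1\leq \nu_k^{T_1}(x)$ at the site being toppled, and then concluding by symmetry and by $\nu=\rho+L_{M,1}u$. The only cosmetic difference is that the paper phrases the inductive hypothesis as an inequality at the currently toppled site (recovering the other sites via the last time each was toppled), whereas you maintain the pointwise inequality $u_k^{T_1}\leq u^{T_2}$ everywhere, which is equivalent.
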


\begin{proof}
	Assume that at time $k$ we topple the site $x_k$ in the toppling scheme $T_1$. We will prove by induction that $u^{T_2}(x_k)\geq u_k^{T_1}(x_k)$.
	
	This property is trivially true for $k=1$. Suppose that this holds for all $i<k$. For $x\neq x_k$ we have two possibilities: either $x$ is not toppled before time $k$ in the scheme $T_1$, in which case $u_k^{T_1}(x)=0$, or $x$ is toppled before time $k$. In this case we consider the last index $i$ in which the site $x$ is toppled. Then $u^{T_2}(x)\geq u_i^{T_1}(x)=u_k^{T_1}(x)$. In both cases
	\begin{equation}\label{abelian1}
		u^{T_2}(x)\geq u_k^{T_1}(x).
	\end{equation} 
	Since $T_1$ is a legal toppling scheme,
	\begin{equation}\label{abelian2}
		\nu^{T_2}(x_k)\leq 1\leq \nu_k^{T_1}(x_k).
	\end{equation}
	The initial configuration is the same for both toppling schemes, so \eqref{abelian2} says
	$$L_{M,1} u^{T_2}(x_k)\leq L_{M,1} u_k^{T_1}(x_k).$$
	That is,
	$$u^{T_2}(x_k)-u_k^{T_1}(x_k)\geq \sum_{x\neq x_k}p(x-x_k)(u^{T_2}(x)-u_k^{T_1}(x)).$$
	
	Each term in the sum above is nonnegative by \eqref{abelian1}; then we conclude that $u^{T_2}(x_k)\geq u_k^{T_1}(x_k)$. Since the site $x_k$ is toppled infinitely many times, it follows that $u^{T_2}\geq u^{T_1}$. The same argument shows the reversed inequality, which concludes the proof.
\end{proof}

From now on, we can omit the index $T$ on $u^T$ and $\nu^T$ since these functions do not depend on the sequence of topplings.

\section{The obstacle function}\label{obstacle-function}

The odometer equation stated in \eqref{odmeqdimn} gives us a way to find the odometer function for the truncated $\alpha$-stable sandpile, provided that we have a function $\gamma_n$, that we call the obstacle function, which satisfies condition \eqref{obstaclecondition}. It is important to notice that the obstacle function is not unique, only its Laplacian needs to be fixed. However, by \eqref{odmeqdimn}, we check that the odometer function $u_n$ is unique and only depends on the initial configuration $\rho_n$.

This section is devoted to the construction of our obstacle function $\gamma_n$. We follow the approach of \cite{Levine2} making our construction using the Green's functions. However, in order to have convergence of the discrete Green's functions, we need to add a special sequence of constants. This strategy is valid because we are only interested in preserving the value of $L_{M,n}\gamma_n$.

We want a function $G_{M,n}$ which satisfies $L_{M,n}G_{M,n}(x)=0$ for $x\neq0$ and $L_{M,n}G_{M,n}(0)=-n^2$. We know that the Green's function of the random walk with transition probability $p_n$ satisfies this property. See \eqref{GM1} for the definition of the Green's function of the random walk with jump probability $p$.

Using Fourier's transform, as in \cite{LawlerLimic}, we write this function as 
$$\widehat{G}_{M,n}(x)=\frac{1}{(2\pi)^2}\int_{[-n\pi,n\pi]^2}\frac{\cos(\theta \cdot x)-1}{\psi_{M,n}(\theta)}d\theta,$$
where 
$$\psi_{M,n}(\theta)=n^\alpha\!\!\!\!\!\!\sum_{y\in B_M^{::}\cup\partial B_M^{::}} \!\!\!\!\!\!p_n(y)(1-\cos(\theta\cdot y)),$$
as defined in \eqref{psiMn}.

In Section \ref{GREEN}, especially in Lemmas \ref{psiM-pequeno-estimate} and \ref{psiM-grande-estimate}, we will have a better understanding of the behavior of the function $\psi_{M,n}$, so we have that $\widehat{G}_{M,n}$ diverges as we send $n$ to infinity. To overcome this difficulty we add a divergent sequence of constants in a way that allows us to obtain convergence. So we fix some $x_0$ in $R^2$ such that $|x_0|=1$ and define

$$G_{M,n}(x)=\frac{1}{(2\pi)^2}\int_{[-n\pi,n\pi]^2}\frac{\cos(\theta \cdot x)-\cos(\theta \cdot x_0)}{\psi_{M,n}(\theta)}d\theta.$$
Recall that we previously introduced this function in \eqref{greenformuladimn}.

Define the convolution
$$G_{M,n}*\rho_n(x)=\frac{1}{n^2}\sum_{y\in\frac{1}{n}\bb{Z}^2}G_{M,n}(x-y)\rho_n(y).$$
The operator $L_{M,n}$ commutes with the sum, therefore,
$$L_{M,n}(G_{M,n}*\rho_n)(x)=(L_{M,n}G_{M,n})*\rho_n(x)=-\rho_n(x).$$
Now we are ready to define the obstacle function $\gamma_n$ on $\frac{1}{n}\bb{Z}^2$ as
\begin{equation}\label{obstacledimn}
	\gamma_n(x)=-\frac{|x|^2}{\sigma_{M,n}^2}-G_{M,n}*\rho_n(x),
\end{equation}
where $\sigma_{M,n}^2=L_{M,n}|x|^2$, as noticed in \eqref{Lquadra} and \eqref{sigma1}.

\section{Analytical estimates}\label{analytical-estimates}

In this section we put together some of the analytical estimates that we use throughout this paper. We also state some qualitative properties of the Green's function, the obstacle function, and the odometer function. The proofs of these propositions can be found in Section \ref{proofs}. 

Recall from \eqref{kdim1} the definition of $k_1$. The next lemma states that we can choose the parameters in the definition of $k_1$ in order to ensure that $k_1$ is positive and, consequently, the probability jump $p$ is well defined. 
\begin{lemma}\label{knpositive}
	There exists a positive constant $C$ such that for all $r>C$ and $M>r$ we have that $k_1$ is positive.
\end{lemma}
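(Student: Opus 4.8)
The plan is to show that the quantity inside the parentheses in \eqref{kdim1}, namely
$$
\Delta(r,M) := \int_{B_M}\frac{dy}{|y|^\alpha} \;-\!\!\!\!\sum_{y\in A_{r,M}^{::}\cup\partial A_{r,M}^{::}}\frac{F_1(y)}{|y|^\alpha},
$$
is strictly positive for a suitable choice of $r<M$. The natural idea is to read this difference as the error between an integral and a Riemann-type sum with cells $[y-\tfrac12,y+\tfrac12]^2$. Indeed, by the very definition of $F_1(y)=\mu([y-\tfrac12,y+\tfrac12]^2\cap A_{r,M})$, one has $\sum_y F_1(y)|y|^{-\alpha}$ playing the role of the Riemann sum $\sum_y \mu(\square_y\cap A_{r,M})\,g(y)$ for $g(y)=|y|^{-\alpha}$, while $\int_{B_M}|y|^{-\alpha}dy = \int_{A_{r,M}}|y|^{-\alpha}dy + \int_{B_r}|y|^{-\alpha}dy$. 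So the first step is the decomposition
$$
\Delta(r,M) = \underbrace{\Big(\int_{A_{r,M}}\tfrac{dy}{|y|^\alpha} - \sum_{y}\tfrac{F_1(y)}{|y|^\alpha}\Big)}_{=:E(r,M)} \;+\; \int_{B_r}\frac{dy}{|y|^\alpha},
$$
where the last integral equals $\tfrac{2\pi}{2-\alpha}r^{2-\alpha}$, a fixed positive quantity once $r$ is fixed (finite since $\alpha<2$).

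The second step is to bound the error term $E(r,M)$. On each unit cell $\square_y$ that lies entirely inside the annulus $A_{r,M}$, the contribution is $\int_{\square_y}(|z|^{-\alpha}-|y|^{-\alpha})\,dz$; since $|\nabla |z|^{-\alpha}| = \alpha |z|^{-\alpha-1}$ and the cell has diameter $\sqrt2$, each such term is $O(|y|^{-\alpha-1})$, and summing over $y$ with $r\le|y|\le M$ gives a bound $O\big(\int_{r}^{M} t^{-\alpha-1}\cdot t\,dt\big) = O(r^{1-\alpha})$ — crucially \emph{independent of $M$} as $M\to\infty$ (the integral $\int_r^\infty t^{-\alpha}\,dt$ converges because $\alpha>1$). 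The boundary cells (those meeting $\partial B_r$ or $\partial B_M$) contribute an extra $O(r^{-\alpha})$ from near $\partial B_r$ and a vanishing amount from near $\partial B_M$. Hence $|E(r,M)| \le C\, r^{-\alpha}$ with $C$ depending only on $\alpha$, uniformly in $M$.

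The third step is to combine the two: $\Delta(r,M) \ge \tfrac{2\pi}{2-\alpha}r^{2-\alpha} - C r^{-\alpha}$, and this is positive as soon as $r$ is chosen large enough that $\tfrac{2\pi}{2-\alpha} r^{2} > C$, i.e. $r > r_0(\alpha)$; then pick any $M>r$. This proves $k_1>0$. The main obstacle I anticipate is making the error estimate on $E(r,M)$ genuinely uniform in $M$ and handling the boundary cells cleanly — in particular checking that $F_1$ on $\partial A_{r,M}^{::}$ is defined precisely so that the Riemann-sum comparison has no systematic bias at the outer boundary, so that the only non-negligible boundary error comes from the inner circle $\partial B_r$ and is absorbed into the $O(r^{-\alpha})$ term. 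The remark in the excerpt that "as a consequence of the proof, $k_n>0$ for the same $r,M$" suggests the same computation should be carried out with the scaled cells of side $1/n$, where the analogous error is $O((r/n)^{1-\alpha}\cdot$ something$)$ times $n^{2-\alpha}$ and the main term $\tfrac{2\pi}{2-\alpha}r^{2-\alpha}$ is scale-invariant; so I would phrase the estimate on $E$ in a scale-robust way from the start.
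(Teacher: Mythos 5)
Your proposal is correct and follows essentially the same route as the paper: both isolate the contribution of the inner ball $B_r$ (present in the integral but absent from the sum, and of size $\tfrac{2\pi}{2-\alpha}r^{2-\alpha}$) and control the integral-versus-sum discrepancy over the rest uniformly in $M$ by a Taylor/Riemann-sum comparison --- the paper packages this as the statement that the full-ball discrepancy $k(R)$ is Cauchy, hence bounded by some $S$, and then chooses $r$ so the (divergent) lattice sum over $B_r$ exceeds $S$, while you bound the annulus error directly. The only blemish is that your claimed bound $|E(r,M)|\le C r^{-\alpha}$ does not follow from the first-order (gradient) estimate you give, which only yields $O(r^{1-\alpha})$; this is harmless, since $r^{1-\alpha}$ is still dominated by $r^{2-\alpha}$ for large $r$, and the paper's symmetrized second-order Taylor argument would recover the sharper rate if needed.
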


The next lemma tells us a simple fact concerning the normalizing constants $c_n$ of the jump probability $p_n$ (recall from \eqref{probdimn}), and the constant $c=\lim_{n\rightarrow\infty}c_n$.
\begin{lemma}\label{cn-to-c}
	There exists a positive constant $C$ which depends on $\alpha$, $r$, and $M$ such that 
	$$|c_n-c|\leq\frac{C}{n^\alpha}\text{~for all~}n.$$
\end{lemma}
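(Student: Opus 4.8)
The final statement is Lemma~\ref{cn-to-c}, asserting $|c_n - c| \le C n^{-\alpha}$.

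\medskip

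The plan is to work directly from the definition of the normalizing constant. By definition, $c_n$ is determined by the requirement that $p_n$ be a probability, i.e. $c_n^{-1} = k_n \cdot \#\{y \in \tfrac1n\bb Z^2 : |y| = \tfrac1n\} + \sum_{y \in \frac1n\bb Z^2 \setminus\{0\}} \frac{F_n(y)}{n^{2+\alpha}|y|^{2+\alpha}}$. The first term is simply $4k_n$ since there are exactly four lattice points at distance $\tfrac1n$. For the second term, I would rewrite it as a Riemann sum: $\sum_{y} \frac{F_n(y)}{n^{2+\alpha}|y|^{2+\alpha}} = \frac{1}{n^2}\sum_{y} \frac{F_n(y)}{n^{\alpha}|y|^{2+\alpha}}$, and since $F_n(y) = n^2 \mu(y^\square \cap A_{r/n,M})$, this is exactly $\sum_y \mu(y^\square \cap A_{r/n,M}) \cdot \frac{1}{n^\alpha |y|^{2+\alpha}}$, which approximates $\int_{A_{r/n,M}} \frac{dz}{n^\alpha |z|^{2+\alpha}} \to \int_{B_M} \frac{dz}{n^\alpha\,\cdots}$ — wait, more carefully, after the scaling the natural object is $\int_{B_M \setminus B_{r/n}} |z|^{-2-\alpha}\,dz$ up to the $n^\alpha$ factor, and one must track how $k_n$ and this integral combine. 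Similarly $c^{-1}$ is read off from \eqref{psiM} / the definition of $L_M$: comparing \eqref{lapldimn} against \eqref{truncfractlapl} one sees $c$ is the constant making the discrete and continuous operators match, so $c^{-1} = \frac12 \int_{B_M} |y|^{-2-\alpha}\,dy$ appropriately interpreted (this identification is essentially what Proposition~\ref{LntoL} records). So the task reduces to: (i) express $c_n^{-1}$ as (four times $k_n$) plus a Riemann-type sum, (ii) express $c^{-1}$ as the corresponding integral, and (iii) bound the difference.

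\medskip

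The key estimates are then two error bounds. First, the Riemann-sum error: $\big| \sum_y \mu(y^\square \cap A_{r/n,M})\,|y|^{-2-\alpha} - \int_{B_M \setminus B_{r/n}} |z|^{-2-\alpha}\,dz\big|$. On the annulus away from the origin, $|z|^{-2-\alpha}$ has gradient of size $O(|z|^{-3-\alpha})$, so the cube-by-cube replacement error is $O(n^{-1}|y|^{-3-\alpha})$ per cube of area $n^{-2}$; summing over the annulus $r/n \le |y| \le M$ gives $O(n^{-1}) \cdot \int_{r/n}^{M} \rho^{-3-\alpha} \rho\,d\rho = O(n^{-1}) \cdot O((r/n)^{-1-\alpha}) = O(n^{\alpha})$ — which is the wrong order, so one must be more careful near the inner boundary. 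The point is that the region $B_{r/n}$ itself has $\int_{B_{r/n}} |z|^{-2-\alpha}dz = O((r/n)^{-\alpha}) = O(n^\alpha)$, and this is exactly cancelled, up to the desired order, by the $k_n$ term: indeed \eqref{kn} is precisely engineered so that $4k_n/n^{2-\alpha}$ equals $\frac14 \cdot 4 \cdot (\int_{B_M}|y|^{-\alpha}dy - \frac1{n^2}\sum F_n(y)|y|^{-\alpha})$, and the difference $\int_{B_M} |y|^{-\alpha} - \frac1{n^2}\sum F_n(y)|y|^{-\alpha}$ is a Riemann-sum error for the \emph{integrable} singularity $|y|^{-\alpha}$ (since $\alpha < 2$), hence $O(n^{-\alpha})$ or better by a direct estimate splitting into $|y| \lesssim 1/n$ and $|y| \gtrsim 1/n$. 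So the right way to organize the proof is to add $4k_n$ (written via \eqref{kn}) to the truncated-at-$r/n$ Riemann sum and observe that the $B_{r/n}$ contributions telescope, leaving only Riemann-sum errors for the singularities $|y|^{-2-\alpha}$ on $B_M \setminus B_{r/n}$ and $|y|^{-\alpha}$ on $B_M$, each controlled by an elementary dyadic decomposition.

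\medskip

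I expect the main obstacle to be the bookkeeping near the inner cutoff $B_{r/n}$: getting the $k_n$-correction to cancel the divergent part of $\int_{B_M\setminus B_{r/n}}|z|^{-2-\alpha}dz$ to the required precision $O(n^{-\alpha})$, rather than merely $O(n^{\alpha-2})$ or $O(1)$, is where the delicate algebra lives — this is the discrete analogue of the cancellation the authors flag in the introduction as the reason for introducing the nearest-neighbor correction. Once the two terms are combined, the remaining estimates are standard: split each sum/integral over dyadic annuli $2^j/n \le |y| \le 2^{j+1}/n$, use $|\nabla(|y|^{-s})| \lesssim |y|^{-s-1}$ to bound the per-cube error by $n^{-1}|y|^{-s-1}$, and sum the resulting geometric-type series, which converges because $s = 2+\alpha$ combined with the $r/n$ cutoff (resp. $s = \alpha < 2$ with no cutoff) keeps everything summable at the claimed rate. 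A maximum-principle or positivity argument is not needed here; this is purely an analytic/combinatorial estimate on the normalization.
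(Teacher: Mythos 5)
There is a genuine gap, and it starts with the identification of the limit. In this paper $c$ is \emph{defined} as $\lim_n c_n$; it is not $\tfrac12\int_{B_M}|y|^{-2-\alpha}\,dy$ (that integral diverges at the origin), and Lemma~\ref{cn-to-c} is really a Cauchy-type rate estimate establishing that the limit exists. Your program of comparing the normalization sum to a continuum integral cannot succeed: writing $y=z/n$, one has
$$c_n^{-1}=4k_n+\sum_{z\in\bb Z^2\setminus\{0\}}\frac{F_n(z/n)}{|z|^{2+\alpha}},\qquad F_n(z/n)=\mu\bigl(z^{1,\square}\cap(B_{Mn}\setminus B_r)\bigr),$$
so the sum lives on the \emph{unrescaled} lattice $\bb Z^2$ with a fixed inner cutoff at radius $r$ and an outer cutoff $Mn$. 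Its limit is a lattice sum, not an integral, and the discrepancy between this lattice sum and $\int_{B_M\setminus B_{r/n}}|z|^{-2-\alpha}dz$ coming from the cells at distance of order $r/n$ from the origin is genuinely of order $n^{\alpha}$ — you correctly flagged this as "the wrong order," but the rescue you propose does not exist. The constant $k_n$ is built from the kernel $|y|^{-\alpha}$, not $|y|^{-2-\alpha}$, and in the identity $c_n^{-1}=4k_n+\Sigma_n$ the two terms simply add; there is no telescoping between them. The cancellation that $k_n$ is "engineered" for occurs in the characteristic function $\psi_{M,n}$, where the factor $1-\cos(\theta\cdot y)\approx\tfrac12(\theta\cdot y)^2$ converts $|y|^{-2-\alpha}$ into $|y|^{-\alpha}$; it plays no role in the normalization of $p_n$.

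The paper's argument avoids all of this by never leaving the lattice. Since the only $n$-dependence of $\Sigma_n$ is the outer cutoff $Mn$, the difference from its limit is bounded by the tail $\sum_{|z|>Mn}|z|^{-2-\alpha}\le Cn^{-\alpha}$, and $|k_n-k|\le Cn^{-\alpha}$ follows from the Cauchy estimate $|k(R')-k(R)|\le CR^{-\alpha}$ already proved in Lemma~\ref{knpositive}. Your dyadic machinery is not needed, and where you do invoke it (for $|y|^{-2-\alpha}$ near the inner cutoff) it provably fails to give the claimed rate. To repair your write-up, replace the sum-versus-integral comparison by the sum-versus-limiting-sum comparison above.
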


In the next lemma we show that, for a sufficiently smooth function $f$, defined in $\bb R^2$, its discrete fractional Laplacian $L_{M,n}$, defined in \eqref{lapldimn}, approximates the fractional truncated Laplacian $L_M$ defined in \eqref{truncfractlapl}. 
\begin{lemma}\label{LntoL}
	If $f:\bb R^2\to\bb R$ is twice differentiable in $\bb R^2$ and $K\subset\bb{R}^2$ is a compact set, then
	$$|L_Mf(x)-L_{M,n}f(x)|\leq\frac{C}{n^{2-\alpha}}$$
	for $x\in K$, where the constant $C$ depends on $\alpha$ and on the first and second partial derivatives of the function $f$ in $K+B_M$.
\end{lemma}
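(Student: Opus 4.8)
The plan is to compare $L_M f(x)$ and $L_{M,n}f(x)$ directly from their integral/sum representations, using a Taylor expansion of $f$ to second order at each point $x$ and controlling the resulting error by the size of the second derivatives of $f$ on the enlarged compact set $K+B_M$, together with the rate of convergence of the normalizing constants $c_n\to c$ from Lemma \ref{cn-to-c}. The key observation is that both operators are, up to normalization, a discretization of the singular integral $\int_{B_M}\frac{f(x+y)+f(x-y)-2f(x)}{|y|^{2+\alpha}}\,dy$ against the symmetric kernel $|y|^{-2-\alpha}\mathbf 1_{B_M}(y)$, and the symmetrized second difference $f(x+y)+f(x-y)-2f(x)$ is $O(|y|^2)$ uniformly on $K$, which kills the non-integrable singularity at the origin.

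The steps, in order, would be: (i) Rewrite $L_{M,n}f(x)$ using \eqref{lapldimn} and \eqref{probdimn}, symmetrizing over $\pm y$ (the kernel $F_n$ is symmetric), so that $L_{M,n}f(x)=\frac{c_n n^{2-\alpha}}{2}\sum_{y}\big(k_n\mathbf 1_{|y|=1/n}+\frac{F_n(y)}{n^{2+\alpha}|y|^{2+\alpha}}\big)\big(f(x+y)+f(x-y)-2f(x)\big)$, and note that the nearest-neighbor correction term contributes $\frac{c_n n^{2-\alpha}}{2}k_n\big(\sum_{|y|=1/n}(f(x+y)+f(x-y)-2f(x))\big)$, which is $O(c_n n^{2-\alpha} k_n \cdot n^{-2})=O(n^{-\alpha})$ since $k_n$ is bounded (it converges, by Lemma \ref{cn-to-c}-type reasoning / \eqref{kn}) — so this term is harmless at the claimed order $n^{-(2-\alpha)}$. (ii) For the main sum, split $B_M$ into the dyadic-type region $|y|\le 1/n$ (or a fixed small radius) and the bulk; on the former, bound the symmetrized difference by $\|D^2 f\|_\infty |y|^2$ and the Riemann-sum error is negligible; on the bulk, the integrand $y\mapsto \frac{f(x+y)+f(x-y)-2f(x)}{|y|^{2+\alpha}}$ is a $C^2$ function of $y$ away from a fixed neighborhood of $0$ (using $f\in C^2$ on $K+B_M$), and the difference between $\frac{1}{n^2}\sum_{y\in B_M^{n,::}}$ and $\int_{B_M}$ is a standard Riemann-sum approximation with error $O(n^{-1})$ times the total variation / Lipschitz constant of the integrand — actually one needs the error $O(n^{-2})$ of the midpoint rule plus boundary effects $O(n^{-1})$ near $\partial B_M$, all of which are $\le C n^{-1}\le C n^{-(2-\alpha)}$ since $\alpha\in(1,2)$ gives $2-\alpha<1$. (iii) Finally replace $c_n$ by $c$ using Lemma \ref{cn-to-c}, picking up an extra $O(n^{-\alpha})$, and collect all contributions into a single constant $C$ depending on $\alpha$ and on $\sup_{K+B_M}(|f|+|D f|+|D^2 f|)$.

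The main obstacle I expect is the careful treatment of the region near the singularity $y=0$ and near the truncation boundary $\partial B_M$. Near $y=0$ one cannot naively compare the sum to the integral term-by-term because the kernel blows up; the fix is to use the second-order cancellation in $f(x+y)+f(x-y)-2f(x)$ \emph{before} estimating, so that effectively one is integrating/summing a kernel of size $|y|^{-\alpha}$, which is integrable near $0$ in $\bb R^2$, and then the Riemann-sum error on the ball of radius $1/n$ is itself $O(\int_{|y|\le 1/n}|y|^{-\alpha}\,dy)=O(n^{\alpha-2})=O(n^{-(2-\alpha)})$. Near $\partial B_M$, the truncation (and the fact that $F_n$ interpolates on the boundary layer $\partial B_M^{::}$) must be reconciled with the sharp cutoff $\mathbf 1_{B_M}$ in \eqref{truncfractlapl}; since the kernel is bounded there and the boundary layer has width $O(1/n)$ and the integrand is bounded (no singularity away from $0$), this mismatch is $O(n^{-1})\le O(n^{-(2-\alpha)})$. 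A secondary bookkeeping point is confirming that $k_n$ stays bounded as $n\to\infty$ — this follows because $\frac1{n^2}\sum_{y\in B_M^{::}}\frac{F_n(y)}{|y|^\alpha}$ is itself a Riemann sum for $\int_{B_M}|y|^{-\alpha}\,dy$ with error $O(n^{-1})$ near the boundary and $O(n^{\alpha-2})$ near $0$, so $n^{2-\alpha}$ times that difference is $O(1)$ — but this is exactly the kind of estimate already implicit in Lemma \ref{cn-to-c}, so I would simply cite it.
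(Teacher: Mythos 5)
Your plan is essentially the paper's proof: symmetrize over $\pm y$, use the second--order cancellation in $f(x+y)+f(x-y)-2f(x)$ to replace the kernel $|y|^{-2-\alpha}$ by an effective kernel $g_x(y)$ of size $|y|^{-\alpha}$, discard the ball of radius $O(1/n)$ around the origin at cost $O(n^{\alpha-2})$, compare the Riemann sum with the integral on the rest by a second--order (midpoint--type) Taylor expansion on each lattice cell, and absorb the nearest--neighbor correction, the boundary layer at $\partial B_M$, and the replacement $c_n\to c$ into the same rate. (Two small slips: in \eqref{lapldimn} the prefactor is $n^{\alpha}$, not $n^{2-\alpha}$, so the nearest--neighbor term is $O(n^{\alpha-2})$ rather than $O(n^{-\alpha})$ --- still within the claimed bound; and the constant need only involve first and second derivatives, not $\sup|f|$.)

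The one step that does not work as literally stated is your estimate of the bulk Riemann--sum error. You bound it by ``$O(n^{-1})$ times the Lipschitz constant of the integrand'' or ``$O(n^{-2})$ from the midpoint rule,'' but the integrand $g_x(y)=\tfrac{1}{2}\bigl(f(x+y)+f(x-y)-2f(x)\bigr)|y|^{-2-\alpha}$ has $|\nabla g_x(y)|\sim|y|^{-1-\alpha}$ and $|D^2g_x(y)|\sim|y|^{-2-\alpha}$, so on the annulus $r/n\le|y|\le\delta$ these norms are as large as $n^{1+\alpha}$ and $n^{2+\alpha}$; a uniform Lipschitz bound would give the divergent estimate $O(n^{-1}\cdot n^{1+\alpha})=O(n^{\alpha})$. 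The correct accounting must keep the $|y|$--dependence: the midpoint error on the cell centered at $y$ is $\lesssim n^{-4}\,(|y|-\tfrac{\sqrt2}{2n})^{-2-\alpha}$, and summing over the lattice annulus gives $n^{-4}\cdot n^{2}\cdot O(n^{\alpha})=O(n^{\alpha-2})$. So the bulk error is not negligible compared to $n^{-(2-\alpha)}$; it \emph{is} the dominant contribution and lands exactly on the claimed rate. This weighted cell--by--cell Taylor estimate (applied to $\tilde g_{x,y}(z)=g_x(y+z)+g_x(y-z)-2g_x(y)$) is precisely what the paper's proof carries out, and once you replace your uniform bound by it, your argument closes.
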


As we mentioned before, the Green's function $G_M$ (recall the definition from \eqref{greenformulacont}), is in some sense the inverse of the truncated fractional Laplacian $L_M$. This is the subject of the next lemma.
\begin{lemma}\label{inversion-smooth-lemma}
	Let $\phi:\bb{R}^2\rightarrow\bb{R}$ be an infinitely differentiable function with compact support. Then 
	$$L_M(G_M*\phi)=-\phi.$$
\end{lemma}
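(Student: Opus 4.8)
The plan is to verify the identity $L_M(G_M*\phi)=-\phi$ on the Fourier side. Since $\phi\in C_c^\infty$, its Fourier transform $\hat\phi$ is Schwartz, and $G_M$ is given by the oscillatory integral \eqref{greenformulacont}, which differs from the ``true'' kernel $\frac{1}{(2\pi)^2}\int \frac{\cos(\theta\cdot x)}{\psi_M(\theta)}d\theta$ only by a constant (the $\cos(\theta\cdot x_0)$ term). First I would record two facts: (i) for any fixed test function, the constant piece $-\cos(\theta\cdot x_0)$ contributes (after convolving and applying $L_M$) a term of the form $(\text{const})\cdot L_M 1$, which vanishes because $L_M$ annihilates constants; hence we may work with the genuine kernel $\frac{1}{(2\pi)^2}\int\frac{\cos(\theta\cdot x)}{\psi_M(\theta)}d\theta$, interpreted appropriately near $\theta=0$ where $\psi_M(\theta)\sim c'|\theta|^2$ so the integrand is only borderline integrable in $d=2$ --- this is exactly why the regularized definition \eqref{greenformulacont} is used. (ii) The symbol of $L_M$ is $-\psi_M(\theta)$: for a nice function $g$, $\widehat{L_Mg}(\theta)=-\psi_M(\theta)\hat g(\theta)$, which follows directly from \eqref{truncfractlapl} and \eqref{psiM} by writing $f(x+y)+f(x-y)-2f(x)$ in Fourier variables as $(e^{i\theta\cdot y}+e^{-i\theta\cdot y}-2)\hat f(\theta)=-2(1-\cos(\theta\cdot y))\hat f(\theta)$ and integrating in $y$ over $B_M$.

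Given these, the computation is: $G_M*\phi$ has (formally) Fourier transform $\frac{\hat\phi(\theta)}{\psi_M(\theta)}$ up to the constant correction, so $L_M(G_M*\phi)$ has Fourier transform $-\psi_M(\theta)\cdot\frac{\hat\phi(\theta)}{\psi_M(\theta)}=-\hat\phi(\theta)$, i.e. $L_M(G_M*\phi)=-\phi$. To make this rigorous I would proceed as follows. Fix a test function $\eta\in C_c^\infty$ and compute $\langle G_M*\phi, L_M\eta\rangle$ using the duality definition of $L_M$ on $L^1_{loc}$; by Fubini (justified since $G_M$ grows only logarithmically and $\phi$, $L_M\eta$ have compact support / rapid decay) this equals $\int\int G_M(x-y)\phi(y)\,L_M\eta(x)\,dy\,dx$. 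Then use the symmetry of $G_M$ and recognize the $x$-integral as $(G_M * L_M\eta)(y)$. At this point one shows $G_M*L_M\eta=-\eta$ by passing to Fourier transforms: $\widehat{L_M\eta}(\theta)=-\psi_M(\theta)\hat\eta(\theta)$, and since $\hat\eta$ is Schwartz the product $\psi_M(\theta)\hat\eta(\theta)$ vanishes to sufficient order at $\theta=0$ (because $\psi_M(\theta)=O(|\theta|^2)$ there) to kill the $1/\psi_M$ singularity, so $\widehat{G_M*L_M\eta}(\theta)=\frac{-\psi_M(\theta)\hat\eta(\theta)}{\psi_M(\theta)}=-\hat\eta(\theta)$ genuinely (the constant-correction term again integrates against $L_M\eta$ to zero since $\int L_M\eta = 0$, as $L_M\eta$ is a sum of second differences). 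Inverting the Fourier transform gives $G_M*L_M\eta=-\eta$, hence $\langle G_M*\phi,L_M\eta\rangle=\int(-\eta(y))\phi(y)\,dy=\langle-\phi,\eta\rangle$ for all $\eta$, which is the assertion.

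The main obstacle is the low-frequency analysis near $\theta=0$: $\psi_M(\theta)\sim c'|\theta|^2$, so $1/\psi_M$ is not locally integrable in $\bb R^2$ and the ``bare'' kernel $\int\cos(\theta\cdot x)/\psi_M(\theta)\,d\theta$ does not converge absolutely; this is precisely the reason for the $-\cos(\theta\cdot x_0)$ regularization in \eqref{greenformulacont}, and any manipulation must carry that term along and exploit the cancellation $\int L_M(\text{something}) = 0$ / $L_M(\text{const})=0$ to discard it. I would handle this by never isolating the divergent integral: always keep $G_M$ paired with a function ($\phi$ or $L_M\eta$) whose Fourier transform either vanishes at the origin to order $\geq 2$ or at least is smooth enough that, combined with $\psi_M\sim|\theta|^2$, the relevant products are integrable. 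A clean way to organize this is to establish once and for all, as a sublemma, the identity $G_M * L_M\eta = -\eta$ for $\eta\in C_c^\infty$ (which is symmetric-looking but is the real content), prove it via the truncated Fourier integral $\int_{|\theta|<\epsilon} + \int_{|\theta|\ge\epsilon}$ and let $\epsilon\to 0$ using dominated convergence with the bound $|1-\cos(\theta\cdot x_0)|\le C|\theta|^2$, and then the statement of the lemma follows by the duality/Fubini argument above. A secondary, minor point to check is the Fubini interchange, which is fine because $|G_M(x)|\le C(1+\log_+|x|)$ (a bound one expects from \eqref{greenformulacont} and which is presumably established in Section~\ref{analytical-estimates}) while $\phi$ is compactly supported and $L_M\eta$ decays like $|x|^{-2-\alpha}$ at infinity.
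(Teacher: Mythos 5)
Your argument is correct, and its core is the same as the paper's: both proofs live on the Fourier side and rest on the observation that the symbol of $L_M$ is $-\psi_M(\theta)$, which cancels the $1/\psi_M$ in the representation \eqref{greenformulacont}. Where you differ is in how the formal cancellation is made rigorous. The paper keeps $L_M$ acting on $G_M$ itself, writes $L_M G_M$ as a non-absolutely-convergent double integral, and inserts a Gaussian damping factor $e^{-t|\theta|^2}$ to justify Fubini before letting $t\to 0$. You instead transfer $L_M$ onto a test function $\eta$ by duality and reduce everything to the sublemma $G_M*L_M\eta=-\eta$, where the low-frequency problem disappears because $\widehat{L_M\eta}(\theta)=-\psi_M(\theta)\hat\eta(\theta)$ vanishes to second order at the origin; the $\cos(\theta\cdot x_0)$ regularization is discarded via $\int L_M\eta=0$. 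Your route is arguably cleaner at low frequency (no limiting parameter to control), at the cost of delivering the identity a priori only in the distributional sense, which is in fact all the paper uses downstream (Lemma \ref{inversion}). Two small inaccuracies that do not affect the argument: for the \emph{truncated} operator, $L_M\eta$ is not merely $O(|x|^{-2-\alpha})$ but actually compactly supported (in $\supp\eta+\overline{B_M}$), which makes your Fubini step even easier; and the pointwise bound on $G_M$ should include the $|x|^{-(2-\alpha)}$ singularity at the origin from Proposition \ref{estimateGM}, which is still locally integrable since $2-\alpha<2$, so the interchange remains valid.
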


The next two propositions give us a better understanding of the behavior of the Green's function $G_M$ defined in \eqref{greenformulacont}. 

A change of variables shows that 
\begin{equation}\label{changegreen}
	G_M(x)=\frac{1}{M^{2-\alpha}}\Big(G_1(\tfrac{x}{M})-G_1(\tfrac{x_0}{M})\Big).
\end{equation}

We have the following estimates for the function $G_1$.
\begin{proposition}\label{estimateG1}
	There exists a bounded function $g$ in $B_1$, and a bounded function $h$ in $B_1^c$, so that
	\begin{equation}\label{estimateG1-1}
		G_1(x)=\frac{\alpha^2}{c(2\pi)^2}\frac{1}{|x|^{2-\alpha}}+g(x)\text{~for all~}|x|<1
	\end{equation} 
	and
	\begin{equation}\label{estimateG1-2}
		G_1(x)=-\frac{2}{\pi\sigma_1^2}\log|x|+h_1(x)+\frac{h(x)}{|x|^{2-\alpha}}\text{~for all~}|x|\geq 1,
	\end{equation}
	where $h_1$ is the bounded function in $B_1^c$, given by $h_1(x)=\frac{1}{\pi^2\sigma_1^2}\int_{B_{|x|}\setminus B_1}\frac{\cos(\theta\cdot \omega)}{|\theta|^2}d\theta$, where $\omega$ is any unitary vector. 
\end{proposition}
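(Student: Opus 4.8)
Everything reduces to understanding the symbol $\psi_1$ appearing in \eqref{greenformulacont}. Since $\psi_1(\theta)=c\int_{B_1}\frac{1-\cos(\theta\cdot y)}{|y|^{2+\alpha}}dy$ is an integral over the fixed compact set $B_1$ of a rotation‑invariant kernel, it is \emph{radial} and real‑analytic on $\bb{R}^2$. Expanding $1-\cos(\theta\cdot y)$ and using $\sigma_1^2=c\int_{B_1}|y|^{-\alpha}dy$ gives $\psi_1(\theta)=\tfrac{\sigma_1^2}{4}|\theta|^2+O(|\theta|^4)$ as $\theta\to0$, hence $\tfrac1{\psi_1(\theta)}=\tfrac{4}{\sigma_1^2|\theta|^2}+\phi(\theta)$ on $B_1$ with $\phi$ bounded and real‑analytic (in particular smooth up to $|\theta|=1$). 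For the high‑frequency regime I would substitute $z=|\theta|y$ to get $\psi_1(\theta)=c|\theta|^\alpha\int_{B_{|\theta|}}\frac{1-\cos(\hat\theta\cdot z)}{|z|^{2+\alpha}}dz$, so that $\psi_1(\theta)=cC_\alpha|\theta|^\alpha\bigl(1+O(|\theta|^{-\alpha})\bigr)$ with $C_\alpha=\int_{\bb{R}^2}\frac{1-\cos(e_1\cdot z)}{|z|^{2+\alpha}}dz$; consequently $\tfrac1{\psi_1(\theta)}-\tfrac1{cC_\alpha|\theta|^\alpha}$ is smooth and $O(|\theta|^{-2\alpha})$ for $|\theta|>1$, hence integrable at infinity since $\alpha>1$.

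With this in hand I would split $\tfrac1{\psi_1}=\tfrac{4}{\sigma_1^2|\theta|^2}\mathbf{1}_{\{|\theta|\le1\}}+\tfrac1{cC_\alpha|\theta|^\alpha}\mathbf{1}_{\{|\theta|>1\}}+E(\theta)$, which induces a decomposition $G_1=G^{(1)}+G^{(2)}+G^{(E)}$. Here $E$ is bounded near $0$ and $O(|\theta|^{-2\alpha})$ at infinity, so $E\in L^1(\bb{R}^2)$, and $E$ is smooth off $\{0\}\cup\{|\theta|=1\}$ (indeed analytic on $\overline{B_1}$ and on $\overline{B_1^{\,c}}$ separately, with only a jump across $|\theta|=1$). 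The low‑frequency piece is the one carrying the logarithm: writing the numerator in $G^{(1)}(x)=\tfrac{4}{(2\pi)^2\sigma_1^2}\int_{B_1}\frac{\cos(\theta\cdot x)-\cos(\theta\cdot x_0)}{|\theta|^2}d\theta$ as $(\cos(\theta\cdot x)-1)-(\cos(\theta\cdot x_0)-1)$, applying the dilation $\eta=|x|\theta$ to the first term, and peeling off the annulus $\{1<|\eta|<|x|\}$ (using $\int_{1<|\eta|<|x|}|\eta|^{-2}d\eta=2\pi\log|x|$ and the radial symmetry), I expect the two boundary constants $\int_{B_1}\frac{\cos(\theta\cdot\omega)-1}{|\theta|^2}d\theta$ to cancel \emph{exactly because $|x_0|=1$}, yielding for $|x|\ge1$ the identity $G^{(1)}(x)=-\tfrac2{\pi\sigma_1^2}\log|x|+h_1(x)$ with $h_1$ precisely as in the statement; for $|x|<1$ the same integral is uniformly bounded since its integrand is $O(1)$.

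The high‑frequency piece $G^{(2)}(x)=\tfrac1{(2\pi)^2cC_\alpha}\int_{|\theta|>1}\frac{\cos(\theta\cdot x)-\cos(\theta\cdot x_0)}{|\theta|^\alpha}d\theta$ I would rewrite using $\int_{|\theta|>1}=\int_{\bb{R}^2}-\int_{B_1}$ and the classical Fourier identity $\int_{\bb{R}^2}\frac{\cos(\theta\cdot x)}{|\theta|^\alpha}d\theta=c_{2,\alpha}|x|^{-(2-\alpha)}$; a direct computation of the Gamma‑function constants (using the known value of $C_\alpha$) gives $c_{2,\alpha}/C_\alpha=\alpha^2$, which produces exactly the coefficient $\tfrac{\alpha^2}{c(2\pi)^2}$ of \eqref{estimateG1-1}. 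The leftover terms, $\int_{B_1}\frac{\cos(\theta\cdot x)}{|\theta|^\alpha}d\theta$ and the $\cos(\theta\cdot x_0)$ contribution, are bounded on $B_1$; on $B_1^{\,c}$ they are $O(|x|^{-3/2})=O(|x|^{-(2-\alpha)})$, which I would get from Bessel asymptotics for the conditionally convergent tail $\int_{|\theta|>1}\frac{\cos(\theta\cdot x)}{|\theta|^\alpha}d\theta=2\pi|x|^{\alpha-2}\int_{|x|}^\infty s^{1-\alpha}J_0(s)\,ds$ together with a stationary‑phase estimate. Finally $G^{(E)}$ is bounded because $E\in L^1$, and on $B_1^{\,c}$ it is $O(|x|^{-3/2})$ as well: one integration by parts (divergence theorem on $B_1$ and on $B_1^{\,c}$) trades the non‑decay of $\cos(\theta\cdot x)$ for a factor $|x|^{-1}$ in the interior (which one iterates) while the surface term on $\{|\theta|=1\}$ contributes the borderline $|x|^{-1}\cdot|x|^{-1/2}$. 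Assembling the three pieces gives \eqref{estimateG1-1} with $g$ the bounded restriction of $G^{(1)}+G^{(2)}+G^{(E)}$ to $B_1$, and \eqref{estimateG1-2} with $\tfrac{h(x)}{|x|^{2-\alpha}}=G^{(2)}(x)+G^{(E)}(x)$ on $B_1^{\,c}$, $h$ being bounded there by the decay estimates above.

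I expect the main obstacle to be twofold. First, the exact bookkeeping of constants: the cancellation of the boundary constants through $|x_0|=1$ (which is what makes the coefficient $-\tfrac2{\pi\sigma_1^2}$ and the function $h_1$ come out clean), and the Gamma‑function identity $c_{2,\alpha}/C_\alpha=\alpha^2$ behind the coefficient of $|x|^{-(2-\alpha)}$. Second, and more seriously, obtaining the \emph{optimal} rate $O(|x|^{-(2-\alpha)})$ for the remainders when $|x|\ge1$: Riemann–Lebesgue alone is not enough, and one must combine the Bessel‑function/stationary‑phase analysis of the conditionally convergent oscillatory tails with the integration‑by‑parts argument at the interface $|\theta|=1$, exploiting the smoothness of $\tfrac1{\psi_1}$ away from the origin that the radial structure of $\psi_1$ provides.
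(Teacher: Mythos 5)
Your proposal is correct and follows essentially the same route as the paper: the same low/high-frequency splitting of $1/\psi_1$ into $\tfrac{4}{\sigma_1^2|\theta|^2}$ on $B_1$, $\tfrac{1}{c_\alpha|\theta|^\alpha}$ on $B_1^c$, and an integrable remainder, the same dilation/annulus-peeling argument producing the logarithm and $h_1$ (with the constants cancelling via $|x_0|=1$), and the same use of the Riesz-kernel Fourier identity for the $|x|^{\alpha-2}$ term. The only difference is cosmetic: where you invoke Bessel/stationary-phase asymptotics for the oscillatory tails, the paper gets the (weaker but sufficient) decay $O(|x|^{-1})$ by writing $\cos(\theta\cdot x)=-|x|^{-2}\Delta_\theta\cos(\theta\cdot x)$ and integrating by parts.
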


We have the following estimates for $G_M$.
\begin{proposition}\label{estimateGM}
	There exists a positive constant $C$ independent of $x$ and $M$, such that
	$$\Big|G_M(x)-\frac{\alpha^2}{c(2\pi)^2}\frac{1}{|x|^{2-\alpha}}+\frac{\alpha^2}{c(2\pi)^2}\Big|<\frac{C}{M^{2-\alpha}}\text{~for all~}x\in B_M$$
	and
	$$\Big|G_M(x)+\frac{2}{\pi\sigma_M^2}\log|x|-h_M(x)-\delta_M\Big|<\frac{C}{|x|^{2-\alpha}}\text{~for all}~x\in B_M^c,$$
	where $\delta_M$ is an explicitly computable constant, converging to $-\frac{\alpha^2}{(2\pi)^2c}$ as $M$ goes to infinity, and $h_M$ is a bounded function in $B_M^c$, given by the formula $h_M(x)=\frac{1}{\pi^2\sigma_M^2}\int_{B_{\frac{|x|}{M}}\setminus B_1}\frac{\cos(\theta\cdot\omega)}{|\theta|^2}d\theta$, where $\omega$ is any unitary vector.
\end{proposition}

The function $G(x):=\frac{\alpha^2}{c(2\pi)^2}\frac{1}{|x|^{2-\alpha}}$ is the Green's function of the fractional Laplacian whose integral form is given by (see \cite{SilvestreTesis}) 
\begin{equation}\label{fractlapl}
	Lf(x)=\int_{\bb{R}^2}\frac{f(x+y)+f(x-y)-2f(x)}{|y|^{2+\alpha}}dy
\end{equation} 
for a twice differentiable, bounded function $f$.

Also note that $\log|x|$ is the Green's function of the classical Laplacian (see \cite{LawlerIntersections}); then Proposition \ref{estimateGM} states that the Green's function $G_M$ is an interpolation between the Green's function of the classical Laplacian and the Green's function of the fractional Laplacian.

The corollary bellow is a direct consequence of Proposition \ref{estimateGM}.
\begin{corollary}\label{GMtoG}
	The function $G_M(x)$ converges to $G(x)-\frac{\alpha^2}{(2\pi)^2c}$ uniformly on compacts sets of $\bb{R}^2$, as $M$ goes to infinity.
\end{corollary}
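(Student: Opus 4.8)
The plan is to read off the statement directly from the estimate \eqref{estim-GM-pequeno} of Proposition \ref{estimateGM}, treating the inequality as an explicit rate of convergence. Recall that $G(x)=\frac{\alpha^2}{c(2\pi)^2}\frac{1}{|x|^{2-\alpha}}$, so \eqref{estim-GM-pequeno} reads
$$
\Big|G_M(x)-\Big(G(x)-\tfrac{\alpha^2}{(2\pi)^2 c}\Big)\Big|<\frac{C}{M^{2-\alpha}}\quad\text{for all }x\in B_M,
$$
with $C$ independent of both $x$ and $M$. Since $\alpha\in(1,2)$ we have $2-\alpha>0$, hence $C/M^{2-\alpha}\to 0$ as $M\to\infty$.

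The first step is to fix an arbitrary compact set $K\subset\bb{R}^2$ and choose $M_0$ so large that $K\subset B_{M_0}$; this is possible because $K$ is bounded. Then for every $M\geq M_0$ we have $K\subset B_M$, so the displayed inequality applies at every $x\in K$, giving
$$
\sup_{x\in K}\Big|G_M(x)-\Big(G(x)-\tfrac{\alpha^2}{(2\pi)^2 c}\Big)\Big|\leq\frac{C}{M^{2-\alpha}}.
$$
Letting $M\to\infty$ along integers (or along reals, the statement being the same) the right-hand side tends to $0$, which is exactly uniform convergence of $G_M$ to $G-\frac{\alpha^2}{(2\pi)^2 c}$ on $K$. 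Since $K$ was an arbitrary compact subset of $\bb{R}^2$, this proves the corollary.

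There is essentially no obstacle here: the corollary is a packaging of Proposition \ref{estimateGM}, and the only thing to be careful about is that the bound in \eqref{estim-GM-pequeno} is valid on all of $B_M$ with a constant $C$ that does not depend on $M$, so that enlarging $M$ genuinely enlarges the region of validity without degrading the bound — this is what makes the limit legitimate on any fixed compact set. One should also note that the singularity of $G$ at the origin is harmless: the statement concerns uniform convergence on compact subsets of $\bb{R}^2$ including a neighbourhood of $0$, but since we are comparing $G_M$ with $G-\frac{\alpha^2}{(2\pi)^2c}$ and \emph{both} functions carry the same $|x|^{\alpha-2}$ singularity, their difference is bounded near $0$, consistently with the right-hand side of \eqref{estim-GM-pequeno} being finite there.
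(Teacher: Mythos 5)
Your proof is correct and is exactly the paper's argument: the paper itself offers no separate proof, stating only that the corollary is a direct consequence of \eqref{estim-GM-pequeno}, and your write-up simply makes that deduction explicit (fix a compact $K$, take $M$ large enough that $K\subset B_M$, and use that the constant $C$ is independent of $M$).
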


In some situations we will need to integrate the first partial derivatives of the Green's function $G_M$ over finite sets. In the following proposition we justify such operations. 
\begin{proposition}\label{GM-derivada}
	The first partial derivatives of the Green's function $G_M$ are locally integrable.
\end{proposition}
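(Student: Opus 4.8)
The plan is to derive the local integrability of the first partial derivatives of $G_M$ from the explicit integral representation \eqref{greenformulacont} together with the estimates already available in Proposition \ref{estimateGM} (equivalently, via the scaling relation \eqref{changegreen}, from Proposition \ref{estimateG1}). The only possible trouble spots are the origin and, for the pieces expressed through $h_M$ or $h_1$, the sphere $|x|=M$ (resp.\ $|x|=1$), where the various formulas are glued together; away from those loci $G_M$ is manifestly smooth, since $\psi_{M,n}$-type denominators are real-analytic and strictly positive, so the integrand in \eqref{greenformulacont} can be differentiated under the integral sign on compacts of $\mathbb{R}^2\setminus\{0\}$ with all the singular behaviour concentrated near $\theta=\infty$, which is harmless after the $\cos(\theta\cdot x)-\cos(\theta\cdot x_0)$ cancellation. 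So local integrability near any point with $|x|\notin\{0,M\}$ is immediate, and it remains to examine a neighbourhood of $0$ and a neighbourhood of the sphere of radius $M$.

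Near the origin I would use \eqref{estim-GM-pequeno}: on $B_M$ we have $G_M(x)=\tfrac{\alpha^2}{c(2\pi)^2}|x|^{-(2-\alpha)}+R(x)$ with $R$ bounded. Differentiating the explicit singular term gives $\partial_i\big(|x|^{-(2-\alpha)}\big)=-(2-\alpha)x_i|x|^{-(4-\alpha)}$, which is $O(|x|^{-(3-\alpha)})$; since $\alpha>1$ we have $3-\alpha<2$, so this is locally integrable in $\mathbb{R}^2$. For the remainder $R$ one argues that its gradient is likewise controlled: either differentiate the integral formula for $G_M$ directly and subtract the corresponding representation of the singular term (the difference integrand decays fast enough in $\theta$ to justify differentiation under the integral), or invoke the $\alpha$-stable regularity theory — $G_M$ differs from the true fractional Green's function $G$ by a function that is harmonic-type/smooth near $0$ up to the $\log$ and bounded corrections appearing in Proposition \ref{estimateGM}. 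Either way $\nabla R$ is bounded, hence locally integrable, near $0$.

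Near the sphere $|x|=M$, the potentially delicate object is $h_M(x)=\tfrac{1}{\pi^2\sigma_M^2}\int_{B_{|x|/M}\setminus B_1}\tfrac{\cos(\theta\cdot\omega)}{|\theta|^2}\,d\theta$, which is defined for $|x|\ge M$ and whose radial derivative picks up a boundary term from the moving domain of integration: $\tfrac{d}{d|x|}h_M$ involves the surface integral of $|\theta|^{-2}$ over $|\theta|=|x|/M$, a bounded quantity, so $\nabla h_M$ is in fact bounded (indeed continuous) on any compact subset of $\{|x|\ge M\}$, and in particular locally integrable across the sphere when combined with the $B_M$-side estimate. The leftover terms in \eqref{estim-GM-grande}, namely $-\tfrac{2}{\pi\sigma_M^2}\log|x|$ and the $O(|x|^{-(2-\alpha)})$ error, have gradients of size $O(1/|x|)$ and $O(|x|^{-(3-\alpha)})$ respectively, both bounded away from $0$ and hence locally integrable there. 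Assembling these three local statements — smoothness off $\{0\}\cup\{|x|=M\}$, integrable $|x|^{-(3-\alpha)}$ singularity at the origin, and bounded gradient across $|x|=M$ — via a partition of unity gives local integrability of $\partial_1 G_M$ and $\partial_2 G_M$ on all of $\mathbb{R}^2$. The main obstacle is bookkeeping the matching at $|x|=M$ so that no spurious distributional (surface-measure) term is produced in the gradient; showing the one-sided gradients agree there, which follows from the $C^1$ character of the integral representation \eqref{greenformulacont} across that sphere, disposes of it.
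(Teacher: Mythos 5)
Your proposal is correct in substance and reaches the right conclusion, but it takes a slightly different route from the paper. The paper's proof reduces to $G_1$ via the scaling identity \eqref{changegreen}, then to the Fourier-space decomposition $\widetilde{G}_1=\tfrac{4}{\sigma_1^2}f_1+\tfrac{1}{c_\alpha}f_2+f_3+f_4$ of \eqref{f1f2f3f4}, and asserts that each $f_i$ has locally integrable first partials by ``standard arguments.'' You instead work in real space from the asymptotics of Proposition \ref{estimateGM}: the only genuine singularity is at the origin, where the explicit term $|x|^{-(2-\alpha)}$ has gradient of order $|x|^{-(3-\alpha)}$, locally integrable in $\bb R^2$ precisely because $\alpha>1$. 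That is the same essential fact driving the paper's argument (it is the contribution of $f_2$), so the two proofs are close relatives; yours buys a cleaner statement of \emph{why} the result holds, at the cost of having to control the gradient of the remainder.

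Two cautions. First, Proposition \ref{estimateGM} bounds only the remainder $R$ itself, not $\nabla R$, so your claim that $\nabla R$ is bounded near $0$ genuinely requires returning to the integral representation and differentiating it --- which is exactly the paper's decomposition into $f_1,\dots,f_4$. Note also that for $f_4$ (and for $f_2$ on $B_1^c$) the differentiated integrand decays only like $|\theta|^{1-2\alpha}$ (resp.\ $|\theta|^{1-\alpha}$), which is not absolutely integrable at infinity for all $\alpha\in(1,2)$; the cancellation $\cos(\theta\cdot x)-\cos(\theta\cdot x_0)$ does \emph{not} supply decay in $\theta$, so ``harmless after the cancellation'' is not a valid justification --- one must exploit the oscillation via integration by parts in $\theta$, as the paper does repeatedly in Section \ref{proofs}. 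Second, your worry about a distributional surface term on $\{|x|=M\}$ is moot: $G_M$ is defined by the single formula \eqref{greenformulacont} on all of $\bb R^2\setminus\{0\}$, and the two expansions in Proposition \ref{estimateGM} are asymptotic descriptions on overlapping-free regions of one smooth function, not a piecewise definition, so there is nothing to glue.
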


Later on we will use that the odometer function $u$ defined in \eqref{odometercontinuous} has compact support. The next lemma will be used to prove this fact. 

\begin{lemma}\label{obstacle-cont-concave}
	There exists a ball $\Omega$ such that the continuous obstacle function $\gamma$ defined in \eqref{obstaclecontinuous} is concave outside $\Omega$.
\end{lemma}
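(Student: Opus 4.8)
\textbf{Proof proposal for Lemma \ref{obstacle-cont-concave}.}

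The plan is to compute the Hessian of $\gamma$ far from the support of $\rho$ and show it is negative definite there. Recall from \eqref{obstaclecontinuous} that $\gamma(x)=-|x|^2/\sigma_M^2-\int_{\bb{R}^2}G_M(x-y)\rho(y)\,dy$. The first term contributes $-2/\sigma_M^2$ to each diagonal entry of the Hessian and nothing off-diagonal, so it alone makes the Hessian equal to $-(2/\sigma_M^2)\,\mathrm{Id}$. Hence it suffices to show that the convolution term $W(x):=\int G_M(x-y)\rho(y)\,dy$ has second derivatives that tend to $0$ as $|x|\to\infty$; then for $|x|$ large enough the Hessian of $\gamma$ is a small perturbation of $-(2/\sigma_M^2)\,\mathrm{Id}$ and in particular negative definite, so $\gamma$ is concave on the complement of a sufficiently large ball $\Omega$.

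The key step is therefore to control $\partial^2_{ij} W(x)$ for large $|x|$. Since $\rho$ has compact support, say $\supp\rho\subset B_{R_0}$, for $|x|\geq 2R_0$ the point $x-y$ ranges over a region with $|x-y|\geq |x|/2$ as $y$ runs over $\supp\rho$; in particular $x-y$ stays outside $B_M$ once $|x|\geq 2(M+R_0)$. On that region I would differentiate under the integral sign, $\partial^2_{ij}W(x)=\int \partial^2_{ij}G_M(x-y)\,\rho(y)\,dy$, which is justified because the relevant derivatives of $G_M$ are locally integrable (Proposition \ref{GM-derivada}) and, away from the origin, smooth. Then the estimate \eqref{estim-GM-grande} of Proposition \ref{estimateGM} gives, for $|z|\geq M$, $G_M(z)=-\frac{2}{\pi\sigma_M^2}\log|z|+h_M(z)+\delta_M+O(|z|^{-(2-\alpha)})$. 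The logarithmic term has second derivatives of size $O(|z|^{-2})$, the remainder has second derivatives of size $O(|z|^{-(4-\alpha)})$, and $h_M(z)=\frac{1}{\pi^2\sigma_M^2}\int_{B_{|z|/M}\setminus B_1}\frac{\cos(\theta\cdot\omega)}{|\theta|^2}\,d\theta$ depends on $z$ only through $|z|$, with $\partial_{|z|}h_M$ and $\partial^2_{|z|}h_M$ decaying polynomially in $|z|$ as well. Consequently $|\partial^2_{ij}G_M(z)|\leq C|z|^{-2}$ for $|z|\geq M$, and integrating against $\rho$ over $\supp\rho$ gives $|\partial^2_{ij}W(x)|\leq C\|\rho\|_{L^1}\,|x|^{-2}\to 0$.

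The main obstacle is the rigorous justification of differentiating under the integral and of the pointwise derivative bounds on $G_M$ outside $B_M$: the formula \eqref{greenformulacont} for $G_M$ is an oscillatory integral, so the cleanest route is to use the already-established decomposition \eqref{estim-GM-grande} rather than differentiate the Fourier representation directly, and to check that the error term in \eqref{estim-GM-grande} can also be differentiated twice with the expected gain of two powers of $|z|^{-1}$ — this is implicit in the proof of Proposition \ref{estimateGM} and I would either invoke it or repeat the short scaling argument via \eqref{changegreen}, reducing everything to derivative estimates for $G_1$ on $B_1^c$ coming from \eqref{estimateG1-2}. Once the bound $|\partial^2_{ij}W(x)|\to 0$ is in hand, choosing $\Omega=B_{R_1}$ with $R_1$ large enough that $\sup_{|x|\geq R_1}|\partial^2_{ij}W(x)|<1/(2\sigma_M^2)$ for all $i,j$ makes the Hessian of $\gamma$ strictly negative definite outside $\Omega$, which is the assertion.
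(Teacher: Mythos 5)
Your proposal follows the paper's proof almost exactly: kill the singular part of the convolution by going far from $\supp\rho$, expand $G_M$ via Proposition \ref{estimateGM}, show the second derivatives of $G_M*\rho$ vanish at infinity, and conclude that the Hessian of $\gamma$ is a small perturbation of $-(2/\sigma_M^2)\,\mathrm{Id}$. The one point where you diverge — and where you correctly sense the obstacle — is the error term in \eqref{estim-GM-grande}: you propose to differentiate $G_M$ itself twice, which requires a bound of the form $|\partial^2_{ij}G_M(z)|\leq C|z|^{-2}$ including the $O(|z|^{-(2-\alpha)})$ remainder, and no such derivative estimate is established in Proposition \ref{estimateGM} or \ref{estimateG1} (the functions $h$, $h_M$ there are only shown to be bounded, and Proposition \ref{GM-derivada} concerns first derivatives and only local integrability). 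The paper avoids this entirely with a small trick: it writes the remainder's contribution as $\int_{\bb{R}^2\setminus B_M} S(y)\,\rho(x-y)\,dy$ with $S(y)\to 0$, so the two $x$-derivatives fall on $\rho\in C_c^2$ rather than on $G_M$, and explicit differentiation is only needed for the logarithm and for $h_M$ (handled in polar coordinates). Your route is repairable by redoing the integration-by-parts estimates of Section \ref{proofs} at the level of second derivatives, but as written it leans on an estimate the paper does not provide; the paper's rearrangement of the convolution is the cleaner fix.
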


\begin{proposition}\label{odom-supp-cont}
	The continuous odometer function $u$ defined in \eqref{odometercontinuous} has compact support. 
\end{proposition}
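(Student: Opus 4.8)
The plan is to show that the odometer $u = s - \gamma$ vanishes outside a large ball, by combining the concavity of $\gamma$ far from the origin (Lemma \ref{obstacle-cont-concave}) with the variational characterization of the least superharmonic majorant $s$. First I would recall that $u \geq 0$ everywhere and that $\{u > 0\}$ is precisely the noncoincidence set $\{s > \gamma\}$, so it suffices to prove $s = \gamma$ outside some ball. By Lemma \ref{obstacle-cont-concave} there is a ball $\Omega$ outside of which $\gamma$ is concave; I would also use that $\rho$ has compact support, so for $|x|$ large $\gamma(x) = -|x|^2/\sigma_M^2 - (G_M * \rho)(x)$ and, by the estimates of Proposition \ref{estimateGM} (the logarithmic growth of $G_M$ and boundedness of $h_M$), $\gamma(x) \to -\infty$ like $-|x|^2/\sigma_M^2$ as $|x|\to\infty$. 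So $\gamma$ is a function that is concave and tends to $-\infty$ at a quadratic rate outside a compact set.

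The key step is a geometric/barrier argument: enlarge $\Omega$ to a ball $B = B_{R}$ so large that $\Omega \subset\subset B$, that $\supp\rho \subset\subset B$, and that (using the quadratic decay) the concave function $\gamma$ restricted to $B^c$ lies below the tangent-plane-type barriers built from the boundary data on $\partial B$. More precisely, I would let $w$ be the (finite) supremum of $\gamma$ on the compact ball $\bar B$; since $\gamma$ is concave on $B^c$ and $\to -\infty$, for $R$ large enough every point $x \in B^c$ satisfies $\gamma(x) \le w$, and moreover the constant function $\equiv w$ is superharmonic (indeed $L_M$ of a constant is $0$) and lies above $\gamma$. This already shows $s \le w$ globally, hence $s$ is bounded above. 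To pin down $s = \gamma$ on $B^c$, I would argue that the coincidence set must be contained in $\bar B$: if $s(x_0) > \gamma(x_0)$ for some $x_0 \in B^c$, then near $x_0$ the majorant $s$ is genuinely superharmonic in the sense of \eqref{def-alter-superh} on the open set $\{s>\gamma\}$ (a standard property of least superharmonic majorants), while the only obstruction to lowering $s$ down to $\gamma$ is the obstacle inside $\bar B$; one then constructs an explicit superharmonic competitor that agrees with $s$ on $\bar B$, dominates $\gamma$ everywhere (using concavity of $\gamma$ on $B^c$ to control it by an affine/constant majorant matching $s$ on $\partial B$), and is strictly smaller than $s$ at $x_0$, contradicting minimality. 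This forces $u = s - \gamma$ to be supported in $\bar B$.

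The main obstacle I expect is making the barrier construction rigorous for the \emph{nonlocal} operator $L_M$: unlike the classical Laplacian, superharmonicity of $L_M$ is sensitive to values of the function everywhere (or at least within range $M$), so one cannot simply cut and paste functions on $B^c$ and $B$ and preserve superharmonicity. The remedy is to exploit that $L_M$ is \emph{truncated} — it only sees values within distance $M$ — together with the fact that a concave function is superharmonic for $L_M$ (since $f(x+y)+f(x-y)-2f(x) \le 0$ pointwise in the integral \eqref{truncfractlapl}); thus on $B_R^c$ with $R$ chosen $\gg M + \mathrm{diam}(\Omega) + \mathrm{diam}(\supp\rho)$, the competitor can be taken to equal $\gamma$ itself far out and to be glued to $s$ in an annulus of width $\gtrsim M$ where concavity still gives the needed sign, so the nonlocal interactions stay under control. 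A secondary technical point is justifying that $s$ is superharmonic in the distributional sense on $\{s > \gamma\}$ and continuous (the latter is supplied by Section \ref{CONT}), which legitimizes applying the maximum principle for $L_M$ used implicitly in the competitor argument.
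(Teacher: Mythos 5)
Your overall target is the right one (show $s=\gamma$ outside a large ball, using Lemma \ref{obstacle-cont-concave} and the fact that concave functions are $L_M$-superharmonic because $f(x+y)+f(x-y)-2f(x)\le 0$ pointwise), and that last observation is exactly the engine of the paper's proof. But the argument as written has two genuine gaps. First, the competitor is never actually constructed: you propose a function that ``agrees with $s$ on $\bar B$, equals $\gamma$ far out, and is glued in an annulus of width $\gtrsim M$,'' and assert that concavity of $\gamma$ ``gives the needed sign'' there. It does not: at a point $x$ of the annulus, $L_M v(x)$ involves the values of $v$ on the whole ball $B(x,M)$, which includes points of $\bar B$ where $v=s\ge\gamma$ is strictly larger; these larger values push $v(x+y)+v(x-y)-2v(x)$ upward, so superharmonicity of the glued function at annulus points is precisely what fails for a nonlocal operator, and no amount of concavity of $\gamma$ alone repairs it. The device that makes this work is to replace the ad hoc gluing by the \emph{concave envelope} $\Gamma$ of $\gamma$ (the least concave function above $\gamma$): by Lemma \ref{obstacle-cont-concave} and the quadratic decay of $\gamma$, one has $\Gamma=\gamma$ outside a (possibly larger) ball, $\Gamma$ is continuous, and at a.e.\ point the tangent plane argument gives $L_M\Gamma\le 0$, so $\Gamma$ is an admissible competitor in \eqref{majorantcontinuous} touching $\gamma$ outside $\Omega$; hence $\gamma\le s\le\Gamma=\gamma$ there. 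Equivalently, one may take a pointwise minimum of affine majorants, since minima of $L_M$-superharmonic functions are $L_M$-superharmonic; but some such global construction is needed in place of the cut-and-paste.

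Second, your argument leans on the continuity of $s$ and on $s$ being superharmonic on $\{s>\gamma\}$, citing Section \ref{CONT}. In this paper those facts are established \emph{after}, and \emph{using}, Proposition \ref{odom-supp-cont}: Lemma \ref{sU-superh} and the choice of $U$ in \eqref{U-assumption} both invoke the compact support of $u$. So within the paper's logical structure your proof is circular. The paper's proof avoids this entirely: it uses no property of $s$ beyond its definition as an infimum over continuous superharmonic majorants of $\gamma$, which is why it must produce an explicit admissible competitor ($\Gamma$) rather than argue by contradiction through properties of $s$.
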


\section{Continuity of the majorant}\label{CONT}

In this section we prove the continuity of the majorant $s$ defined in \eqref{majorantcontinuous}.

The continuity of the majorant, in the previous related works (see, for example,~\cite{Levine2}), is a simple consequence of the mean value inequality for the classical Laplacian operator. We can translate the same technique to our context and, for that, in this section we present a mean value inequality for the truncated fractional Laplacian. We follow the approach of Chapters 2 and 3 of Silvestre~\cite{SilvestreTesis}. However, in our case, extra care must be taken since we are dealing with a truncation parameter that changes when we do simple operations like change of variables on integration. 

Recall from \eqref{truncfractlapl} and \eqref{greenformulacont} the definition of the truncated fractional Laplacian $L_M$ and its respective Green's function $G_M$. For $\lambda>0$, consider the Green's function $G_{\frac{M}{\lambda}}$.

We will construct a family of approximations of the identity. For every $\lambda>0$ we stick a paraboloid from below to cut out the singularity of the function $G_{\frac{M}{\lambda}}$ at $x=0$. In this way we obtain a twice differentiable function that we call $\Gamma_\lambda(x)$. The parameters of the paraboloid can be chosen in a way such that the function $\Gamma_\lambda(x)$ coincides with $G_{\frac{M}{\lambda}}(x)$ when $x$ is outside a ball of radius $1$, and inside the ball $B_1$ the function $\Gamma_\lambda(x)$ coincides with the paraboloid. 

Next we rescale $\Gamma_\lambda$ in the following way:
\begin{equation}\label{Gammatilde_lambda}
	\widetilde{\Gamma}_\lambda(x)=\tfrac{1}{\lambda^{2-\alpha}}\Gamma_\lambda\left(\tfrac{x}{\lambda}\right).
\end{equation}
A change of variables shows that
\begin{equation}\label{changegreenlambda}
	G_{\frac{M}{\lambda}}(x)=\lambda^{2-\alpha}G_M(\lambda x)-\lambda^{2-\alpha}G_M(\lambda x_0),
\end{equation} 
so the function $\widetilde{\Gamma}_\lambda$ coincides with $G_M(x)-G_M(\lambda x_0)$ when $x$ is outside $B_\lambda$.

Let us consider then the family of functions
\begin{equation}\label{Gammahat_lambda}
	\widehat{\Gamma}_\lambda=\widetilde{\Gamma}_\lambda+G_M(\lambda x_0)
\end{equation}
and note that $\widehat{\Gamma}_\lambda$ coincides with $G_M$ outside $B_\lambda$. Moreover $\widehat{\Gamma}_{\lambda_1}\leq\widehat{\Gamma}_{\lambda_2}$ if $\lambda_1\geq\lambda_2$. 

\begin{proposition}\label{propcont-1}
	For all $\lambda>0$, $-L_M\widehat{\Gamma}_\lambda$ is a positive continuous function in $L^1$. Moreover  
	$$\int_{\bb{R}^2}-L_M\widehat{\Gamma}_\lambda(x)dx=1.$$
\end{proposition}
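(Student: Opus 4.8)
The plan is to exploit the explicit construction of $\widehat{\Gamma}_\lambda$ as a ``paraboloid-capped'' Green's function together with the inversion property of $G_M$ from Lemma \ref{inversion-smooth-lemma}. First I would record that $\widehat{\Gamma}_\lambda$ is a $C^2$ function on $\bb{R}^2$ (by construction the paraboloid is glued to $G_M$ in a $C^2$ fashion on $\partial B_\lambda$), so $L_M\widehat{\Gamma}_\lambda$ is defined in the classical, pointwise sense of \eqref{truncfractlapl} and is continuous. Outside $B_\lambda$ the function $\widehat{\Gamma}_\lambda$ agrees with $G_M$; since $L_MG_M = 0$ away from the singularity (this is the content of Lemma \ref{inversion-smooth-lemma} applied to approximate identities, or can be read off \eqref{greenformulacont}), one expects $-L_M\widehat{\Gamma}_\lambda$ to be supported essentially where the modification lives, giving the $L^1$ and compact-support-type control. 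The scaling identity \eqref{changegreenlambda}, i.e. $\widehat{\Gamma}_\lambda(x) = \lambda^{2-\alpha}\widehat{\Gamma}_1(x/\lambda) + \text{const}$ in the appropriate form, reduces everything to the case $\lambda = 1$: one checks that $L_M$ interacts with this rescaling in a way that turns $-L_M\widehat\Gamma_\lambda$ into $\lambda^{-2}(-L_M\widehat\Gamma_1)(\cdot/\lambda)$, which automatically preserves both positivity and the value of the integral, so it suffices to treat $\widehat\Gamma_1$.

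Next I would establish positivity. The heuristic is that $\widehat{\Gamma}_\lambda$ is obtained from the superharmonic-type profile $G_M$ (whose Laplacian is a negative delta plus lower-order pieces) by lifting it with a paraboloid from below on $B_\lambda$; the paraboloid has strictly negative truncated fractional Laplacian (a downward-opening paraboloid $-a|x|^2+b$ has $L_M(-a|x|^2) = -a\sigma_M^2 < 0$ pointwise), and on the transition this should dominate. More carefully, I would argue via the distributional identity: for a nonnegative test function $\phi$, use $\langle -L_M\widehat\Gamma_\lambda,\phi\rangle = -\langle \widehat\Gamma_\lambda, L_M\phi\rangle$ and compare $\widehat\Gamma_\lambda$ with $G_M$, whose distributional Laplacian is $-\delta_0$ up to the machinery in Section \ref{obstacle-function}/Lemma \ref{inversion-smooth-lemma}. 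Since $\widehat\Gamma_\lambda \geq G_M$ everywhere (it lies above $G_M$ because we removed a singularity by pushing up) with equality outside $B_\lambda$, and $\widehat\Gamma_\lambda - G_M$ is a nonnegative bump, the sign of $L_M(\widehat\Gamma_\lambda - G_M)$ plus the $-\delta_0$ from $G_M$ must net out to a nonnegative measure; combined with the pointwise $C^2$ computation this forces $-L_M\widehat\Gamma_\lambda \geq 0$ and that it is a genuine $L^1$ function (the $\delta_0$ having been smoothed away).

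For the normalization $\int_{\bb{R}^2}-L_M\widehat{\Gamma}_\lambda\,dx = 1$, the clean approach is: approximate the constant $1$ by a sequence $\phi_k \in C_c^\infty$ that equals $1$ on a large ball containing the (bounded) region where $-L_M\widehat\Gamma_\lambda$ is concentrated — noting $-L_M\widehat\Gamma_\lambda$ decays fast at infinity since $\widehat\Gamma_\lambda = G_M$ there and $L_MG_M$ vanishes — and write $\int -L_M\widehat\Gamma_\lambda\,\phi_k = -\langle\widehat\Gamma_\lambda, L_M\phi_k\rangle = -\langle G_M, L_M\phi_k\rangle - \langle \widehat\Gamma_\lambda - G_M, L_M\phi_k\rangle$. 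The second term vanishes for $k$ large because $\widehat\Gamma_\lambda - G_M$ is supported in $B_\lambda$ while $L_M\phi_k$ vanishes there. The first term is $-\langle G_M, L_M\phi_k\rangle = \langle L_M G_M, \phi_k\rangle = \langle \delta_0, \phi_k\rangle = 1$ by Lemma \ref{inversion-smooth-lemma} (or its proof, which identifies $L_MG_M = -\delta_0$); here one must be slightly careful that $L_M\phi_k$ is not compactly supported in the naive sense, but since $\phi_k$ is eventually constant $L_M\phi_k$ is supported in an annulus and the pairings make sense. I expect the main obstacle to be the bookkeeping around $G_M$ only being defined up to an additive constant and its distributional Laplacian being $-\delta_0$ only in the precise sense set up in Section \ref{obstacle-function} — i.e. making rigorous the informal claim ``$L_M G_M = -\delta_0$'' and the claim that $-L_M\widehat\Gamma_\lambda$ decays fast enough at infinity to justify replacing $1$ by compactly supported $\phi_k$; once those two points are pinned down, positivity and the value of the integral follow from the pointwise $C^2$ computation of $L_M$ applied to the explicit paraboloid cap plus the scaling reduction to $\lambda=1$.
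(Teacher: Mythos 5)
Your normalization argument (pairing with cutoffs that are eventually constant, killing the $\langle \widehat\Gamma_\lambda - G_M, L_M\phi_k\rangle$ term by support considerations, and invoking $L_MG_M=-\delta_0$ via Lemma \ref{inversion-smooth-lemma}) matches the paper's in substance. But two points do not hold up. First, the scaling reduction is wrong as stated: \eqref{changegreenlambda} gives $G_{\frac{M}{\lambda}}(x)=\lambda^{2-\alpha}G_M(\lambda x)-\lambda^{2-\alpha}G_M(\lambda x_0)$, so the cap $\Gamma_\lambda$ is a modification of $G_{\frac{M}{\lambda}}$ and the change of variables yields $L_M\widehat\Gamma_\lambda(x)=\lambda^{-2}L_{\frac{M}{\lambda}}\Gamma_\lambda(x/\lambda)$ --- the truncation radius changes with $\lambda$. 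There is no single ``$\lambda=1$ case'' to reduce to; one must prove positivity and normalization of $-L_{\frac{M}{\lambda}}\Gamma_\lambda$ for every $\lambda$. This is exactly the pitfall the paper flags at the start of Section \ref{CONT}.

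Second, and more seriously, your positivity argument has a genuine gap. The sign of your comparison is backwards: the paraboloid is stuck from \emph{below} to cut out the $+\infty$ singularity, so $\Gamma_\lambda\le G_{\frac{M}{\lambda}}$ (equivalently $\widehat\Gamma_\lambda\le G_M$), not $\ge$. More importantly, the ``signs must net out'' reasoning is not a proof: writing $-L_M\widehat\Gamma_\lambda=\delta_0+L_M(G_M-\widehat\Gamma_\lambda)$, the second term is $L_M$ applied to a nonnegative compactly supported bump, which has no definite sign, so nothing forces the sum to be a nonnegative function. Likewise the pointwise computation $L_M(-a|x|^2)=-a\sigma_M^2<0$ does not settle the matter on $B_1$, because the operator is nonlocal and at $x\in B_1$ it also samples values of $\Gamma_\lambda$ in the region where it equals $G_{\frac{M}{\lambda}}$. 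The paper's proof supplies the missing idea: a sliding/touching argument. For $x\notin B_1$ one compares directly with $G_{\frac{M}{\lambda}}$ using $\Gamma_\lambda\le G_{\frac{M}{\lambda}}$ with equality at $x$; for $x\in B_1\setminus\{0\}$ one finds a translate $G_{\frac{M}{\lambda}}(\cdot-x_1)+\delta$ touching $\Gamma_\lambda$ from above at $x$, and the comparison at the touching point gives $-L_{\frac{M}{\lambda}}\Gamma_\lambda(x)\ge -L_{\frac{M}{\lambda}}G_{\frac{M}{\lambda}}(x-x_1)=0$; at $x=0$ the integrand is pointwise positive since $\Gamma_\lambda$ attains its maximum there. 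Without some version of this barrier argument your positivity claim is unsupported.
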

\begin{proof}
	We have the following rescaling property
	$$L_M\widehat{\Gamma}_\lambda(x)=L_M\widetilde{\Gamma}_\lambda(x)=\frac{1}{\lambda^2}L_{\frac{M}{\lambda}}\Gamma_\lambda\left(\frac{x}{\lambda}\right).$$
	Then a change of variables shows that it is suffices to prove that for all $\lambda>0$, we have that $-L_{\frac{M}{\lambda}}\Gamma_\lambda$ is a positive continuous function in $L^1$ and $\int_{\bb{R}^2}-L_{\frac{M}{\lambda}}\Gamma_\lambda(x)dx=1$.
	
	We first prove the $-L_{\frac{M}{\lambda}}\Gamma_\lambda$ is positive.\\
	
	If $x\notin B_1$, then $\Gamma_\lambda(x)=G_{\frac{M}{\lambda}}(x)$ and for every other $y$, $\Gamma_\lambda(y)\leq G_{\frac{M}{\lambda}}(y)$; then
	\begin{eqnarray*}
		-L_{\frac{M}{\lambda}}\Gamma_\lambda(x)&=&\frac{c}{2}\int_{B_{\frac{M}{\lambda}}}\frac{2\Gamma_\lambda(x)-\Gamma_\lambda(x-y)-\Gamma_\lambda(x+y)}{|y|^{2+\alpha}}dy\\
		&\geq& \frac{c}{2}\int_{B_{\frac{M}{\lambda}}}\frac{2G_{\frac{M}{\lambda}}(x)-G_{\frac{M}{\lambda}}(x-y)-G_{\frac{M}{\lambda}}(x+y)}{|y|^{2+\alpha}}dy\\
		&=&0
	\end{eqnarray*}
	since $G_{\frac{M}{\lambda}}$ is the Green's function of $L_{\frac{M}{\lambda}}$.\\
	
	If $x\in B_1\setminus {\{0\}}$, there exist $x_1\neq x$ and a positive $\delta$ such that $G_{\frac{M}{\lambda}}(\cdot -x_1)\!+\!\delta$ touches $\Gamma_\lambda$ from above at the point $x$. Then 
	\begin{eqnarray*}
		-L_{\frac{M}{\lambda}}\Gamma_\lambda(x)&=&\frac{c}{2}\int_{B_{\frac{M}{\lambda}}}\frac{2\Gamma_\lambda(x)-\Gamma_\lambda(x-y)-\Gamma_\lambda(x+y)}{|y|^{2+\alpha}}dy\\
		&\geq& \frac{c}{2}\int_{B_{\frac{M}{\lambda}}}\frac{2G_{\frac{M}{\lambda}}(x-x_1)-G_{\frac{M}{\lambda}}(x-y-x_1)-G_{\frac{M}{\lambda}}(x+y-x_1)}{|y|^{2+\alpha}}dy\\
		&=&0
	\end{eqnarray*}
	since $G_{\frac{M}{\lambda}}$ is the Green's function of $L_{\frac{M}{\lambda}}$. In the second line above we canceled the parameter $\delta$.
	
	If $x=0$, then $\Gamma_\lambda$ attains its maximum at $x$:
	$$-L_{\frac{M}{\lambda}}\Gamma_\lambda(x)=\frac{c}{2}\int_{B_{\frac{M}{\lambda}}}\frac{2\Gamma_\lambda(x)-\Gamma_\lambda(x-y)-\Gamma_\lambda(x+y)}{|y|^{2+\alpha}}dy>0$$
	because we are integrating a positive function.
	
	To show that $\int_{\bb{R}^2}-L_{\frac{M}{\lambda}}\Gamma_\lambda(x)dx=1$ we consider a smooth function $\omega$ such that $\omega(x)\leq 1$ for every $x\in\bb{R}^2$, $\omega(x)=1$ for every $x\in B_1$ and $\supp\omega\subset B_2$. Let $\omega_R(x)=\omega(\frac{x}{R})$, then
	\begin{eqnarray*}
		\int_{\bb{R}^2}-L_{\frac{M}{\lambda}}\Gamma_\lambda(x)dx-1&=&\lim_{R\rightarrow 0}\big\langle -L_{\frac{M}{\lambda}}(\Gamma_\lambda-G_{\frac{M}{\lambda}}),\omega_R\big\rangle\\
		&=&\lim_{R\rightarrow 0}\big\langle \Gamma_\lambda-G_{\frac{M}{\lambda}},-L_{\frac{M}{\lambda}}\omega_R\big\rangle\\
		&=&0
	\end{eqnarray*}
	since $L_{\frac{M}{\lambda}}\omega_R$ goes to zero uniformly on compacts sets, and $G_{\frac{M}{\lambda}}-\Gamma_\lambda$ is an $L^1$ function with compact support.
\end{proof}

Now let us define $\eta_\lambda=-L_M\widehat{\Gamma}_\lambda$. 

\begin{proposition}\label{propcont-2}
	The family $\eta_\lambda$ is an approximation of the identity as $\lambda\rightarrow0$ in the sense that, if $f$ is a continuous function in $\bb{R}^2$, then
	$$\eta_\lambda*f\rightarrow f \text{~~when~~}\lambda\rightarrow0\text{~~uniformly on compacts}.$$
\end{proposition}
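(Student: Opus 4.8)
The plan is to exploit the scaling built into the family $\widehat{\Gamma}_\lambda$ together with the two facts just established in Proposition \ref{propcont-1}, namely that $\eta_\lambda = -L_M\widehat{\Gamma}_\lambda$ is a nonnegative continuous $L^1$ function with total mass $1$. First I would record the exact rescaling relation for $\eta_\lambda$. Using $\widehat{\Gamma}_\lambda = \widetilde{\Gamma}_\lambda + G_M(\lambda x_0)$, $\widetilde{\Gamma}_\lambda(x) = \lambda^{\alpha-2}\Gamma_\lambda(x/\lambda)$, and the rescaling $L_M\widetilde{\Gamma}_\lambda(x) = \lambda^{-2}L_{M/\lambda}\Gamma_\lambda(x/\lambda)$ already used in the proof of Proposition \ref{propcont-1}, I get
\begin{equation}\label{eta-rescale}
\eta_\lambda(x) = \frac{1}{\lambda^2}\,\eta_1^{(\lambda)}\!\left(\frac{x}{\lambda}\right), \qquad \eta_1^{(\lambda)} := -L_{M/\lambda}\Gamma_\lambda .
\end{equation}
The subtlety flagged by the authors is precisely that $\eta_1^{(\lambda)}$ still carries a dependence on $\lambda$ through the truncation parameter $M/\lambda$; so this is not literally a dilation of one fixed bump. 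Nonetheless $\eta_1^{(\lambda)}$ is supported in $B_1$ for every $\lambda$: outside $B_1$ we have $\Gamma_\lambda = G_{M/\lambda}$, hence $L_{M/\lambda}\Gamma_\lambda = L_{M/\lambda}G_{M/\lambda} = 0$ away from the origin, so $\eta_\lambda$ is supported in $B_\lambda$. This uniform support control is the replacement for the usual "fixed profile" hypothesis in an approximation of the identity.

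Next I would run the standard approximation-of-the-identity argument adapted to a family of bumps that are only uniformly supported and uniformly of mass $1$, without needing a common profile. Fix a compact $K\subset\bb{R}^2$ and let $f$ be continuous; then $f$ is uniformly continuous on a compact neighborhood $K+B_1$ of $K$, so given $\varepsilon>0$ there is $\delta\in(0,1)$ with $|f(x-y)-f(x)|<\varepsilon$ whenever $x\in K$ and $|y|<\delta$. For $\lambda<\delta$, using $\int \eta_\lambda = 1$ and $\supp\eta_\lambda\subset B_\lambda$,
\begin{equation}\label{aoi-estimate}
\big|(\eta_\lambda * f)(x) - f(x)\big| = \Big| \int_{B_\lambda} \eta_\lambda(y)\,\big(f(x-y) - f(x)\big)\, dy \Big| \leq \varepsilon \int_{B_\lambda}\eta_\lambda(y)\,dy = \varepsilon,
\end{equation}
uniformly in $x\in K$, where I used $\eta_\lambda \geq 0$. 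Here I am implicitly using that $\eta_\lambda*f$ is well defined and continuous, which follows from $\eta_\lambda\in L^1$ with compact support and $f$ continuous. Letting $\lambda\to 0$ gives the claim; note the argument never needed the rescaling identity \eqref{eta-rescale} in the end, only nonnegativity, unit mass, and shrinking support — but \eqref{eta-rescale} and the remark that $\Gamma_\lambda=G_{M/\lambda}$ outside $B_1$ are what deliver $\supp\eta_\lambda\subset B_\lambda$.

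The only real obstacle is bookkeeping: one must be certain that the three properties (nonnegativity, $\int\eta_\lambda=1$, $\supp\eta_\lambda\subset B_\lambda$) genuinely hold for all $\lambda>0$ and not merely for small $\lambda$. Nonnegativity and unit mass are exactly Proposition \ref{propcont-1}. For the support, I would spell out that $\Gamma_\lambda$ is constructed to agree with $G_{M/\lambda}$ on $B_1^c$ (the paraboloid only modifies $\Gamma_\lambda$ inside $B_1$), and that $L_{M/\lambda}G_{M/\lambda}$ vanishes off the origin because $G_{M/\lambda}$ is the Green's function of $L_{M/\lambda}$; the nonlocality of $L_M$ is not an issue for a point $x\in B_1^c$ because the defining integral for $L_{M/\lambda}\Gamma_\lambda(x)$ only probes $\Gamma_\lambda$ at $x\pm y$ with $|y|<M/\lambda$, and one checks $\Gamma_\lambda$ and $G_{M/\lambda}$ differ only on $B_1$, a set that, while it may be reached, contributes to both $L_{M/\lambda}\Gamma_\lambda(x)$ and $L_{M/\lambda}G_{M/\lambda}(x)$ identically — more cleanly, one simply observes $\Gamma_\lambda - G_{M/\lambda}$ is an $L^1$ function supported in $\overline{B_1}$, so for the support statement it is enough that $-L_{M/\lambda}\Gamma_\lambda$ agrees with $-L_{M/\lambda}G_{M/\lambda}=0$ as a distribution on $B_1^c$, which is immediate from the distributional identity $L_{M/\lambda}G_{M/\lambda} = -\delta_0$ together with $\Gamma_\lambda = G_{M/\lambda}$ near any point of $B_1^c$ only after accounting for the overlap; to avoid this nuisance I would instead directly invoke $\supp\eta_\lambda \subset B_\lambda$ as proved via $\supp(-L_{M/\lambda}\Gamma_\lambda)\subset B_1$, which is how the authors phrased it when they wrote "$\widehat\Gamma_\lambda$ coincides with $G_M$ outside $B_\lambda$." With these three facts in hand, \eqref{aoi-estimate} completes the proof.
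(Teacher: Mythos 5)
Your argument hinges on the claim $\supp\eta_\lambda\subset B_\lambda$, and that claim is false. Because $L_{M/\lambda}$ is nonlocal, the fact that $\Gamma_\lambda$ coincides with $G_{M/\lambda}$ outside $B_1$ does \emph{not} make $L_{M/\lambda}\Gamma_\lambda$ vanish outside $B_1$: for $x\in B_1^c$ with $|x|<M/\lambda+1$ one has
\begin{equation*}
-L_{M/\lambda}\Gamma_\lambda(x)\;=\;-L_{M/\lambda}G_{M/\lambda}(x)\;+\;\frac{c}{2}\int_{B_{M/\lambda}}\frac{(G_{M/\lambda}-\Gamma_\lambda)(x-y)+(G_{M/\lambda}-\Gamma_\lambda)(x+y)}{|y|^{2+\alpha}}\,dy,
\end{equation*}
and while the first term is zero, the second is strictly positive whenever $x\pm y$ reaches $B_1$, precisely because $G_{M/\lambda}-\Gamma_\lambda>0$ inside $B_1$. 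Your parenthetical remark that the overlap with $B_1$ "contributes to both $L_{M/\lambda}\Gamma_\lambda(x)$ and $L_{M/\lambda}G_{M/\lambda}(x)$ identically" is exactly where the argument breaks: the two functions differ on $B_1$, so the contributions differ. The correct support statement, which the paper records explicitly, is $\supp(L_{M/\lambda}\Gamma_\lambda)\subset\overline{B_{M/\lambda+1}}$, i.e.\ $\supp\eta_\lambda\subset\overline{B_{M+\lambda}}$ --- a set that does \emph{not} shrink as $\lambda\to0$. With that, your estimate \eqref{aoi-estimate} collapses, since $|y|<\delta$ no longer holds on the domain of integration.

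What is actually needed, and what the paper proves, is a substitute for shrinking support: a uniform tail bound $-L_{M/\lambda}\Gamma_\lambda(y)\leq C|y|^{-(2+\alpha)}$ for $|y|\geq 2$ and $\lambda$ small (obtained from $\Gamma_\lambda=G_{M/\lambda}$ off $B_1$, $L_{M/\lambda}G_{M/\lambda}=0$ off the origin, and the uniform control of $\int_{B_1}G_{M/\lambda}$ coming from Corollary \ref{GMtoG}). This gives uniform integrability of the rescaled profiles $-L_{M/\lambda}\Gamma_\lambda$, and then the standard splitting $\int_{B_R}+\int_{B_R^c}$ in the variable $y$ of $\int -L_{M/\lambda}\Gamma_\lambda(y)\big(f(x-\lambda y)-f(x)\big)\,dy$ finishes the proof: on $B_R$ one uses uniform continuity of $f$ (since $|\lambda y|\leq\lambda R$ is small), and on $B_R^c$ one uses the small tail mass together with boundedness of $f$ on $K+B_{M+1}$. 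Your proof is missing this entire mechanism; the rest of your write-up (nonnegativity, unit mass, continuity of the convolution) is fine but does not suffice on its own.
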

\begin{proof}
	We will prove first that the collection of functions $\{L_{\frac{M}{\lambda}}\Gamma_\lambda, \lambda>0\}$ is uniformly integrable for $\lambda$ sufficiently small. That is, for all $\epsilon>0$ there exists $R>0$ such that
	$$\int_{\bb{R}^2\setminus B_R}|L_{\frac{M}{\lambda}}\Gamma_\lambda(y)|dy<\epsilon$$
	for $\lambda$ sufficiently small.
	
	Fix $y$ outside $B_2$. Hence, since $\Gamma_\lambda(y)=G_{\frac{M}{\lambda}}(y)$, we have
	$$-L_{\frac{M}{\lambda}}\Gamma_\lambda(y)=\frac{c}{2}\int_{B_{\frac{M}{\lambda}}}\frac{2G_{\frac{M}{\lambda}}(y)-\Gamma_\lambda(y-z)-\Gamma_\lambda(y+z)}{|z|^{2+\alpha}}dz.$$
	Since for $y\neq0$, $L_{\frac{M}{\lambda}}G_{\frac{M}{\lambda}}(y)=0$, the equation above can be written as
	\begin{eqnarray*}
		-L_{\frac{M}{\lambda}}\Gamma_\lambda(y)&=&\frac{c}{2}\!\int_{B_{\frac{M}{\lambda}}}\!\!\!\!\!\!\!\frac{G_{\frac{M}{\lambda}}(y-z)-\Gamma_\lambda(y-z)}{|z|^{2+\alpha}}dz+\frac{c}{2}\!\int_{B_{\frac{M}{\lambda}}}\!\!\!\!\!\!\!\frac{G_{\frac{M}{\lambda}}(y+z)-\Gamma_\lambda(y+z)}{|z|^{2+\alpha}}dz\\
		&=:&I_1+I_2.
	\end{eqnarray*}
	Since the support of $G_{\frac{M}{\lambda}}-\Gamma_\lambda$ is contained in $\overline{B_1}$, the integral $I_1$ above can be taken over the set $B_{\frac{M}{\lambda}}\cap B(y,1)$, and the integral $I_2$ can be taken in $B_{\frac{M}{\lambda}}\cap B(-y,1)$. 
	
	Since $G_{\frac{M}{\lambda}}$ is bigger than $\Gamma_\lambda$ in $B_1$, for a positive constant $C$ independent of $y$ we write
	\begin{eqnarray*}
		I_1&\leq&C\int_{B(-y,1)}\frac{G_{\frac{M}{\lambda}}(y+z)}{|z|^{2+\alpha}}dz\\
		&\leq&\frac{C}{|y|^{2+\alpha}}\int_{B_1}G_{\frac{M}{\lambda}}(z)dz,
	\end{eqnarray*}
	where the constant $C$ has changed from one line to the other.
	
	By Corollary \ref{GMtoG}, $G_{\frac{M}{\lambda}}$ converges uniformly on $B_1$ to $G-\frac{\alpha^2}{c(2\pi)^2}$ (where $G$ is the Green's function of the fractional Laplacian) when $\lambda$ goes to $0$. 
	
	Then we can pass the limit inside the integral:
	$$\lim_{\lambda\rightarrow0}\int_{B_1}G_{\frac{M}{\lambda}}(y)dy=\int_{B_1}\left(G(y)-\frac{\alpha^2}{c(2\pi)^2}\right)dy.$$
	Since $G$ is integrable in $B_1$, for $\lambda$ sufficiently small, there exists a constant $C>0$ independent of $\lambda$ and $y$ such that
	\begin{equation}\label{Estim-I1}
		I_1\leq\frac{C}{|y|^{2+\alpha}}\text{~for~}y\in B_2^c.
	\end{equation}
	The exact same argument proves that \eqref{Estim-I1} also holds for $I_2$. Then we have
	$$-L_{\frac{M}{\lambda}}\Gamma_\lambda(y)\leq\frac{C}{|y|^{2+\alpha}},\text{~for all~}y\in B_2^c.$$
	Take $R>2$, then
	$$\int_{\bb{R}^2\setminus B_R}-L_{\frac{M}{\lambda}}\Gamma_\lambda(y)dy\leq\int_{\bb{R}^2\setminus B_R}\frac{C}{|y|^{2+\alpha}}dy=\frac{2\pi C}{\alpha R^\alpha},$$
	which proves the uniform integrability. 
	
	Now fix a compact set $K$; we will prove that $\eta_\lambda*f\rightarrow f$ as $\lambda$ goes to zero, uniformly on $K$, assuming that the function $f$ is continuous. 
	
	Recall that $\eta_\lambda(x)=-L_M\widehat{\Gamma}_\lambda(x)=-\frac{1}{\lambda^2}L_{\frac{M}{\lambda}}\Gamma_\lambda(\frac{x}{\lambda})$. We have, for $x\in K$
	\begin{eqnarray*}
		\eta_\lambda*f(x)-f(x)&=&\int_{\bb{R}^2}\eta_\lambda(x-y)(f(y)-f(x))dy\\
		&=&\int_{\bb{R}^2}-L_{\frac{M}{\lambda}}\Gamma_\lambda(y)(f(x-\lambda y)-f(x))dy\\
		&=&\int_{B_{(\frac{M}{\lambda}+1)}}-L_{\frac{M}{\lambda}}\Gamma_\lambda(y)(f(x-\lambda y)-f(x))dy,
	\end{eqnarray*}
	where in the last line we used that the support of $L_{\frac{M}{\lambda}}\Gamma_\lambda$ is contained in $\overline{B_{(\frac{M}{\lambda}+1)}}$; this can be easily seen from the fact that $\Gamma_\lambda$ coincides with $G_{\frac{M}{\lambda}}$ outside $B_1$.
	
	By the uniform integrability of $\{L_{\frac{M}{\lambda}}\Gamma_\lambda,\lambda>0\}$, we fix $\epsilon>0$ and take $R>0$ so that 
	\begin{equation}\label{prop-2-2}
		\int_{B_R^c}-L_{\frac{M}{\lambda}}\Gamma_\lambda(y)dy<\epsilon\text{~for all~}\lambda\text{~~sufficiently small}.
	\end{equation} 
	Then we write, for $x\in K$
	\begin{eqnarray*}
		\eta_\lambda*f(x)-f(x)&=&\int_{B_R\cap B_{(\frac{M}{\lambda}+1)}}-L_{\frac{M}{\lambda}}\Gamma_\lambda(y)(f(x-\lambda y)-f(x))dy\\
		&&~~~~~~~~+\int_{B_R^c\cap B_{(\frac{M}{\lambda}+1)}}-L_{\frac{M}{\lambda}}\Gamma_\lambda(y)(f(x-\lambda y)-f(x))dy\\
		&=:&J_1+J_2
	\end{eqnarray*}
	
	\emph{Estimate of} $J_1$: Since $f$ is uniformly continuous on compact sets, we choose $\lambda$ small enough, so that 
	$$|f(x-\lambda y)-f(x)|<\epsilon\text{~for all~}x\in K, y\in B_R.$$
	Then 
	$$|J_1|\leq\epsilon\int_{\bb{R}^2}-L_{\frac{M}{\lambda}}\Gamma_\lambda(y)dy=\epsilon.$$
	
	\emph{Estimate of} $J_2$: For $y\in B_{(\frac{M}{\lambda}+1)}$, we have $\lambda y\in B_{M+\lambda}\subset B_{M+1}$. Then in the integral $J_2$, we are only considering $f$ taking values in the set $K+B_{(M+1)}$. Since $f$ is continuous, in particular, is bounded in $\overline{K+B_{(M+1)}}$, let us say by $A$. 
	
	Then, by \eqref{prop-2-2}, we have
	$$|J_2|\leq 2A\int_{B_R^c}-L_{\frac{M}{\lambda}}\Gamma_\lambda(y)dy<2A\epsilon.$$
	
	Since $\epsilon$ is arbitrary, we conclude our proof.
\end{proof}

In the next proposition we prove a mean value inequality for superharmonic functions with respect to the operator $L_M$. Recall from \eqref{def-alter-superh} the definition of superharmonicity for functions that are not twice differentiable. 

\begin{proposition}\label{propcont-3}
	Let $f$ be a continuous function. Then $f$ is superharmonic in an open set $U$ if and only if 
	\begin{equation}\label{mean-value}
		f(x)\geq \eta_\lambda *f(x)
	\end{equation}
	for any $x$ in $U$ and any $\lambda$ satisfying $B(x,\lambda)\subset U$.
\end{proposition}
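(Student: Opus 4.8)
The plan is to prove the two implications separately, using the family $\widehat{\Gamma}_\lambda$ and the approximation of the identity $\eta_\lambda$ constructed above.

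\emph{The implication ``mean value inequality $\Rightarrow$ superharmonic''} is the easier direction. Suppose that \eqref{mean-value} holds at every $x\in U$ for all admissible $\lambda$. We want to show $\langle f, L_M\phi\rangle\le 0$ for every nonnegative $\phi\in C_c^\infty$ with $\supp\phi\subset U$. The key observation is that $L_M\phi$ can be written in terms of the operators $\eta_\lambda$: more precisely, since $\eta_\lambda=-L_M\widehat\Gamma_\lambda$ and $\widehat\Gamma_\lambda\to$ (a fundamental solution) as $\lambda\to 0$, one has $L_M(\eta_\lambda*\phi)=-(\,L_M\widehat\Gamma_\lambda * L_M\phi\,)$-type identities; concretely I would use that $\eta_\lambda*\phi\to\phi$ and $L_M(\eta_\lambda*\phi)=\eta_\lambda*L_M\phi\to L_M\phi$ uniformly on compacts (Proposition \ref{propcont-2} applied to the continuous function $L_M\phi$). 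Then by symmetry of $L_M$,
$$\langle f, L_M\phi\rangle=\lim_{\lambda\to 0}\langle f, \eta_\lambda*L_M\phi\rangle=\lim_{\lambda\to 0}\langle f, L_M(\eta_\lambda*\phi)\rangle.$$
Now I want to move the $L_M$ back onto $f$ and exploit \eqref{mean-value}. The cleanest route: fix $\lambda>0$ small and write $\eta_\lambda*\phi-\phi$; using the representation $\eta_\lambda*\phi(x)-\phi(x)=\int -L_{M/\lambda}\Gamma_\lambda(y)(\phi(x-\lambda y)-\phi(x))dy$, and pairing against $f$ and integrating by parts/using Fubini, $\langle f, \eta_\lambda*\phi-\phi\rangle=\langle \eta_\lambda*f - f,\phi\rangle\le 0$ by \eqref{mean-value} (since $\supp\phi\subset U$, only values of $\eta_\lambda*f-f$ on $\supp\phi\subset U$ matter, and there $B(x,\lambda)\subset U$ for $\lambda$ small). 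So $\langle f,\phi\rangle\ge\langle f,\eta_\lambda*\phi\rangle$ for all small $\lambda$, hence by the same monotone-in-$\lambda$ structure (recall $\widehat\Gamma_{\lambda_1}\le\widehat\Gamma_{\lambda_2}$ for $\lambda_1\ge\lambda_2$) one gets the distributional inequality $\langle f,L_M\phi\rangle\le 0$ after differentiating in $\lambda$ or simply taking $\lambda\to 0$ in a telescoping estimate.

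\emph{The implication ``superharmonic $\Rightarrow$ mean value inequality''} is the main obstacle, because here $f$ is only continuous and $L^1_{loc}$, so we cannot touch $f$ with test functions at a single point. The strategy is a mollification argument: set $f_\epsilon=\eta_\epsilon*f$, which is smooth (since $\eta_\epsilon=-L_M\widehat\Gamma_\epsilon$ is smooth away from where $\widehat\Gamma_\epsilon$ is glued, and actually $C^\infty$ after convolving), and check that $f_\epsilon$ is superharmonic in a slightly shrunk open set $U_\epsilon=\{x\in U: B(x,C\epsilon)\subset U\}$: indeed for nonnegative $\psi\in C_c^\infty(U_\epsilon)$, $\langle L_M f_\epsilon,\psi\rangle=\langle f, \eta_\epsilon * L_M\psi\rangle=\langle f, L_M(\eta_\epsilon*\psi)\rangle\le 0$ since $\eta_\epsilon*\psi$ is a legitimate nonnegative test function supported in $U$ (because $\supp(\eta_\epsilon*\psi)\subset \supp\psi + \overline{B_{(M+1)\epsilon}}\subset U$). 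For a $C^2$ superharmonic function $g$ with $L_M g\le 0$ pointwise, the mean value inequality $g(x)\ge\eta_\lambda*g(x)$ follows by integrating $L_M g$ against $-\widehat\Gamma_\lambda(\cdot-x)$-shifted kernels: precisely, $\eta_\lambda*g(x)-g(x)=\langle \eta_\lambda(\cdot-x)-\delta_x,g\rangle$ and since $\int\eta_\lambda=1$ (Proposition \ref{propcont-1}) with $\eta_\lambda=-L_M\widehat\Gamma_\lambda$, a duality computation gives $\eta_\lambda*g(x)-g(x)=\langle \widehat\Gamma_\lambda(\cdot - x)-(\text{fundamental solution at }x),\, L_M g\rangle\le 0$, using that $\widehat\Gamma_\lambda\ge$ the genuine Green's function $G_M$ only inside $B_\lambda$ and they agree outside, together with $L_M g\le 0$. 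Applying this to $g=f_\epsilon$ gives $f_\epsilon(x)\ge\eta_\lambda*f_\epsilon(x)$ for $x\in U_\epsilon$ and $B(x,\lambda)\subset U_\epsilon$.

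Finally I would pass to the limit $\epsilon\to 0$: by Proposition \ref{propcont-2}, $f_\epsilon\to f$ uniformly on compacts (since $f$ is continuous), and similarly $\eta_\lambda*f_\epsilon\to\eta_\lambda*f$ uniformly on compacts (convolve the uniform convergence with the fixed $L^1$ kernel $\eta_\lambda$, or use associativity $\eta_\lambda*f_\epsilon=\eta_\epsilon*(\eta_\lambda*f)\to\eta_\lambda*f$). Hence $f(x)\ge\eta_\lambda*f(x)$ for every $x\in U$ with $B(x,\lambda)\subset U$ (any such $x$ lies in $U_\epsilon$ for $\epsilon$ small). The delicate points to get right are: (i) the duality identity expressing $\eta_\lambda*g-g$ as a pairing of $L_M g$ with a nonpositive kernel — this is where the construction of $\widehat\Gamma_\lambda$ as $G_M$ capped by a paraboloid is used, since $\widehat\Gamma_\lambda-G_M\le 0$ and is compactly supported, so one must carefully track the truncation parameter changing under the rescaling \eqref{Gammatilde_lambda}–\eqref{changegreenlambda}; and (ii) verifying $\eta_\epsilon*\psi$ is an admissible test function inside $U$, which controls exactly how much $U$ must be shrunk. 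Once these are in place, both implications close.
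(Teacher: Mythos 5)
Your overall strategy for the main direction (superharmonic $\Rightarrow$ mean value inequality) — mollify $f$, prove the inequality for $C^2$ functions by a duality computation, then pass to the limit — is workable in principle but has two concrete problems as written. First, $f_\epsilon=\eta_\epsilon*f$ is not $C^2$: by Proposition \ref{propcont-1} the kernel $\eta_\epsilon=-L_M\widehat{\Gamma}_\epsilon$ is merely a continuous $L^1$ function (the gluing of the paraboloid to the Green's function costs regularity), so convolving it with a continuous $f$ yields only a continuous function, and the pointwise $C^2$ argument cannot then be applied to $g=f_\epsilon$. You would need the standard smooth mollifier of \eqref{mollifier} for this step. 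Second, the comparison you invoke is reversed: the paraboloid is stuck from \emph{below} to cut out the singularity, so $\widehat{\Gamma}_\lambda\leq G_M$ inside $B_\lambda$ (with equality outside); it is precisely $\widehat{\Gamma}_\lambda-G_M\leq0$ together with $-L_Mg\geq0$ that makes the pairing $\langle(\widehat{\Gamma}_\lambda-G_M)(x-\cdot),-L_Mg\rangle$ nonpositive and hence $\eta_\lambda*g(x)\leq g(x)$. With the inequality as you state it ($\widehat{\Gamma}_\lambda\geq G_M$ inside $B_\lambda$) the conclusion would come out with the wrong sign.

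The paper's proof avoids all of this with a device you did not use: for $\lambda_1\geq\lambda_2$ the difference $\widehat{\Gamma}_{\lambda_2}(\cdot-x)-\widehat{\Gamma}_{\lambda_1}(\cdot-x)$ is a nonnegative function supported in a small ball around $x$, hence is itself an admissible test function for the distributional inequality $L_Mf\leq0$; pairing and using self-adjointness gives directly $\eta_{\lambda_1}*f(x)\leq\eta_{\lambda_2}*f(x)$, i.e.\ monotonicity in $\lambda$, and letting $\lambda_2\to0$ via Proposition \ref{propcont-2} yields \eqref{mean-value} with no mollification of $f$ at all. Finally, your converse direction is left at the level of a slogan ("differentiating in $\lambda$ or a telescoping estimate"): the identity $\langle f,\eta_\lambda*\phi-\phi\rangle=\langle\eta_\lambda*f-f,\phi\rangle\leq0$ that you derive does not by itself produce $\langle f,L_M\phi\rangle\leq0$ without a further quantitative step relating $\eta_\lambda*\phi-\phi$ to $L_M\phi$ as $\lambda\to0$; the paper also dismisses this direction in one line, so this is a shared omission rather than a defect unique to your argument.
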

\begin{proof}
	Suppose $L_Mf\leq0$ in $U$.	Let $r>0$ such that, $r>\lambda_1>\lambda_2$, then $\widehat{\Gamma}_{\lambda_2}-\widehat{\Gamma}_{\lambda
		_1}$ is a nonnegative smooth function supported in $B_r$. If $L_{M}f\leq0$ in $B(x,r)$, then
	$$\big\langle L_Mf,\widehat{\Gamma}_{\lambda_2}(\cdot-x)-\widehat{\Gamma}_{\lambda
		_1}(\cdot-x)\big\rangle\leq0.$$
	Using the self-adjointness of $L_M$,
	$$\big\langle f,(-L_M)\widehat{\Gamma}_{\lambda_1}(\cdot-x)-(-L_M)\widehat{\Gamma}_{\lambda_2}(\cdot-x)\big\rangle\leq0.$$
	Therefore 
	\begin{eqnarray*}
		\big\langle f,\eta_{\lambda_1}(\cdot-x)\big\rangle&\leq&\big\langle f,\eta_{\lambda_2}(\cdot-x)\big\rangle\\
		\Rightarrow f*\eta_{\lambda_1}(x)&\leq&f*\eta_{\lambda_2}(x).
	\end{eqnarray*}
	By Proposition \ref{propcont-2}, since $f$ is continuous, we have $f*\eta_\lambda\rightarrow f$ uniformly on compact sets, as $\lambda\rightarrow0$. We take $\lambda_2\rightarrow0$ above, and this concludes our proof.
	
	The converse part follows easily. 
\end{proof}

In Proposition \ref{propcont-3} we obtained the desired mean value inequality for the truncated fractional Laplacian. Before proving the continuity of the majorant $s$ defined in \eqref{majorantcontinuous} we need a few lemmas.  

Given a bounded open set $U$, define the function
\begin{equation}\label{majorantU}
	s^U(x)=\inf\{f(x): f \text{~~is continuous, superharmonic on~}U, \text{~and~}f\geq\gamma\}.
\end{equation}
By Proposition \ref{odom-supp-cont}, there exists a ball $\Omega$ which contains the support of the odometer function $u$. We fix this ball $\Omega$ and assume that $U$ satisfies 
\begin{equation}\label{U-assumption}
	\Omega+B_{M+1}\subset U.
\end{equation} 

\begin{lemma}\label{sU-superh}
	Let $U$ be a bounded open set satisfying \eqref{U-assumption}. Then the function $s^U$ defined in \eqref{majorantU} satisfies
	$$s^U(x)\geq \eta_\lambda*s^U(x)$$
	for all $x\in\bb{R}^2$ and $\lambda$ sufficiently small.
\end{lemma}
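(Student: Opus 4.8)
The plan is to distinguish two regimes for the point $x$, according to whether $B(x,\lambda)$ is contained in $U$ or not, and to exploit the mean value inequality from Proposition \ref{propcont-3} together with the fact that $s^U$ is, on its own domain of harmonicity, a genuine superharmonic function. First I would record the basic structural facts about $s^U$: it is the infimum of continuous superharmonic majorants of $\gamma$ on $U$, hence (by a standard argument, using that the pointwise infimum of finitely many superharmonic functions is superharmonic and that one can regularize) $s^U$ is itself continuous and superharmonic on $U$, and $s^U \geq \gamma$ everywhere. Outside $U$, the function $s^U$ need not be superharmonic a priori, so the delicate point is controlling $\eta_\lambda * s^U(x)$ when $x$ is near or outside $\partial U$.

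The key observation is that for $\lambda$ small (specifically $\lambda < 1$, so that the convolution with $\eta_\lambda$ only sees values of the argument within distance $\tfrac{M}{\lambda}\cdot\lambda + \lambda = M+\lambda \le M+1$, by the support property of $L_{M/\lambda}\Gamma_\lambda$ established in the proof of Proposition \ref{propcont-2}), the inequality $s^U(x) \geq \eta_\lambda * s^U(x)$ at a point $x$ only involves the values of $s^U$ on the ball $B(x, M+1)$. I would therefore argue as follows. If $B(x,\lambda) \subset U$, then since $s^U$ is superharmonic on $U$ and continuous, Proposition \ref{propcont-3} applies directly and gives $s^U(x) \geq \eta_\lambda * s^U(x)$. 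If $B(x,\lambda) \not\subset U$, then $x$ lies within distance $\lambda$ of $U^c$; using assumption \eqref{U-assumption}, namely $\Omega + B_{M+1} \subset U$, such an $x$ is at distance more than $M+1-\lambda > 1$ from $\Omega$ once $\lambda$ is small. In that region one shows $s^U$ agrees with the obstacle-free majorant behaviour; more precisely, the plan is to show that away from $\Omega + B_{M+1}$ the constraint $f \geq \gamma$ is not binding for the competitors defining $s^U$ on $U$, so that $s^U$ near $\partial U$ coincides with (or is dominated pointwise by, in the right direction) a fixed continuous superharmonic function on all of $\bb{R}^2$ — and for that function the mean value inequality holds globally by Proposition \ref{propcont-3}.

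Concretely I expect the argument to run through the following steps, in order. \textbf{(1)} Establish continuity and superharmonicity of $s^U$ on $U$ by the usual obstacle-problem reasoning (lattice of superharmonic functions, lower semicontinuous regularization, using Lemma \ref{obstacle-cont-concave} and Proposition \ref{odom-supp-cont} to guarantee there are superharmonic majorants and that the relevant competitors exist). \textbf{(2)} Note that $s^U$ is bounded and that, since $\gamma$ is concave outside $\Omega$ (Lemma \ref{obstacle-cont-concave}), for $x$ outside $\Omega + B_{M+1}$ one has fine control: a competitor that is superharmonic on $U$ and lies above $\gamma$ can be taken, near such $x$, to be a fixed affine-plus-correction function, which is superharmonic on all of $\bb{R}^2$. \textbf{(3)} For the actual inequality, split on whether $B(x,\lambda)\subset U$: in the interior case invoke Proposition \ref{propcont-3} directly; in the boundary case use step (2) plus the finite range $M+1$ of the convolution to reduce $\eta_\lambda * s^U(x)$ to a quantity governed by a genuinely superharmonic function, for which Proposition \ref{propcont-3} again yields the inequality.

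The main obstacle, I expect, is step (2): controlling $s^U$ near $\partial U$ and showing that the obstacle constraint is inactive there, so that the boundary-case convolution inequality can be reduced to the mean value inequality for a globally superharmonic function. This requires using the concavity of $\gamma$ outside $\Omega$ together with assumption \eqref{U-assumption} in a quantitative way — essentially that $\Omega + B_{M+1}$ sits strictly inside $U$ buys enough room that, for $\lambda$ small, the ``bad'' points $x$ with $B(x,\lambda)\not\subset U$ all lie in the region where $\gamma$ is concave and the sandpile has no odometer mass, so that $s^U$ behaves there like the least superharmonic majorant of a concave function, which is locally affine and hence harmonic. The finite-range property of $\eta_\lambda$ (hidden in the support statement $\supp L_{M/\lambda}\Gamma_\lambda \subset \overline{B_{M/\lambda + 1}}$ from Proposition \ref{propcont-2}) is what makes this localization legitimate and is worth stating explicitly before the case split.
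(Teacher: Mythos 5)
Your overall architecture (interior versus boundary case, concavity of $\gamma$ outside $\Omega$, finite range $M+1$ of $\eta_\lambda$) matches the paper's, but both halves of your argument have gaps that the paper's proof is specifically designed to avoid. In the interior case you propose to apply Proposition \ref{propcont-3} directly to $s^U$, which requires first knowing that $s^U$ is continuous and superharmonic on $U$. That is circular in the paper's logical order: the continuity of $s^U$ is Lemma \ref{sU-cont}, whose proof \emph{uses} the present lemma, and the distributional notion of superharmonicity \eqref{def-alter-superh} is not obviously stable under an infimum over an infinite family (your appeal to ``the infimum of finitely many superharmonic functions'' plus regularization does not yield that the regularized infimum equals $s^U$). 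The paper sidesteps all of this: for each competitor $f$ (continuous, superharmonic on $U$, $f\geq\gamma$) one has $f\geq s^U$, hence by Proposition \ref{propcont-3} and the positivity of $\eta_\lambda$,
$$f(x)\;\geq\;\eta_\lambda*f(x)\;\geq\;\eta_\lambda*s^U(x),$$
and taking the infimum over $f$ on the left gives $s^U(x)\geq\eta_\lambda*s^U(x)$ for $x\in U$ with no regularity of $s^U$ needed. This one-line trick is the missing idea in your step (1).

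In the boundary/exterior case your plan — show the obstacle constraint is ``not binding'' so that $s^U$ agrees with a globally superharmonic competitor, and assert that the least superharmonic majorant of a concave function is ``locally affine and hence harmonic'' — is both vaguer and partly wrong (a concave function is already superharmonic, so its least superharmonic majorant is the function itself, not something affine). The actual mechanism is a sandwich: since the infimum defining $s^U$ in \eqref{majorantU} is over a larger class than the one defining $s$, one has $\gamma\leq s^U\leq s$, and Proposition \ref{odom-supp-cont} gives $s=\gamma$ on $\Omega^c$, hence $s^U=\gamma$ on $\Omega^c$. For $x\in U^c$ the concavity of $\gamma$ on $\Omega^c$ (Lemma \ref{obstacle-cont-concave}) gives $\gamma(x)\geq\eta_\lambda*\gamma(x)$ via Proposition \ref{propcont-3}, and since $\supp(\eta_\lambda)\subset B_{M+1}$ for $\lambda<1$ and \eqref{U-assumption} forces $B(x,M+1)\subset\Omega^c$, one may replace $\gamma$ by $s^U$ inside the convolution. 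You identified the right ingredients here, but the identity $s^U=\gamma$ on $\Omega^c$ obtained by squeezing $s^U$ between $\gamma$ and $s$ is the concrete step your outline does not supply.
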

\begin{proof}
	Note that since the infimum in \eqref{majorantU} is taken over a larger set than the infimum in the definition of $s$, we have that $s^U\leq s$.
	
	By Proposition \ref{odom-supp-cont}, $s$ coincides with $\gamma$ outside the ball $\Omega$. Then $\gamma\leq s^U\leq s$ and, hence, $s^U$ also coincides with $\gamma$ in $\Omega^c$. 
	
	Consider a continuous function $f$ bigger than or equal to $\gamma$, and superharmonic in $U$. Then $f\geq s^U$ and, moreover, by Proposition \ref{propcont-3}, for $x\in U$ and $\lambda$ sufficiently small, we have
	$$f(x)\geq \eta_\lambda*f(x)\geq \eta_\lambda*s^U(x);$$
	taking the infimum over $f$, we have 
	\begin{equation}\label{s-meanvalue}
		s^U(x)\geq \eta_\lambda*s^U(x)\text{~~for~~}x\in U.
	\end{equation}
	By our choice of $U$, the function $\gamma$ is superharmonic in $U^c$. Then, using Proposition \ref{propcont-3}, for $x\in U^c$ and $\lambda$ sufficiently small
	$$s^U(x)=\gamma(x)\geq \eta_\lambda*\gamma(x)$$
	holds.
	
	Notice that $\supp(\eta_\lambda)\subset B_{M+1}$ for $\lambda<1$. Since $U$ satisfies \eqref{U-assumption}, we can replace $\gamma$ by $s^U$ in the equation above, and obtain
	\begin{equation}\label{s=sU-1}
		s^U(x)\geq\eta_\lambda*s^U(x)\text{~for~}x\in U^c.
	\end{equation}
	By \eqref{s-meanvalue} and \eqref{s=sU-1} we conclude our proof. 
\end{proof}

The next lemma states the continuity of $s^U$.

\begin{lemma}\label{sU-cont}
	Let $U$ be a bounded open set satisfying \eqref{U-assumption}. Then the function $s^U$ defined in \eqref{majorantU} is continuous.
\end{lemma}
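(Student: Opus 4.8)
The plan is to prove continuity of $s^U$ by exploiting the mean value inequality from Lemma \ref{sU-superh} together with an upper barrier, following the classical strategy for obstacle problems. First I would record two bounds on $s^U$. Since $s^U$ coincides with $\gamma$ on $\Omega^c$ (as observed in the proof of Lemma \ref{sU-superh}) and $\gamma$ is continuous, it suffices to prove continuity on the compact set $\overline{\Omega}$, or on a slightly larger compact neighborhood. On that set we know $\gamma \leq s^U \leq s$, but more importantly $s^U$ satisfies $s^U \geq \eta_\lambda * s^U$ for all small $\lambda$, which is the substitute for the supermean-value property of classical superharmonic functions.

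Next I would establish upper semicontinuity and lower semicontinuity separately. For \emph{lower semicontinuity}: since $s^U$ is an infimum of continuous functions, it is automatically upper semicontinuous; so the real content is lower semicontinuity (the harder direction in obstacle problems), which I would get from the mean value inequality. Concretely, fix $x$ and a sequence $x_j \to x$. Using $s^U(x_j) \geq \eta_\lambda * s^U(x_j)$ and the fact that $\eta_\lambda$ is a fixed $L^1$ approximation of the identity (Proposition \ref{propcont-2}, Proposition \ref{propcont-1}), together with a uniform bound $|s^U| \leq C$ on a neighborhood (from $\gamma \leq s^U \leq s$ and continuity of $\gamma, s$ — note $s$ is not yet known continuous, but it is bounded on compacts since $s = \gamma$ off $\Omega$ and $s \leq $ any fixed superharmonic majorant such as a large constant plus $\gamma$), I would pass to the limit: $\liminf_j s^U(x_j) \geq \liminf_j \eta_\lambda * s^U(x_j)$. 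The convolution $\eta_\lambda * s^U$ is continuous in $x$ for each fixed $\lambda$ provided $s^U$ is, say, bounded and measurable — which it is — so $\eta_\lambda * s^U(x_j) \to \eta_\lambda * s^U(x)$. Then $\liminf_j s^U(x_j) \geq \eta_\lambda * s^U(x)$, and letting $\lambda \to 0$ and using $\eta_\lambda * s^U \to s^U$ pointwise (which needs care, since $s^U$ need not be continuous a priori — but $s^U$ is bounded and its set of discontinuities has measure zero, or one argues at Lebesgue points, or alternatively first shows $s^U$ is upper semicontinuous hence the convergence $\eta_\lambda*s^U(x)\to s^U(x)$ holds from above) gives $\liminf_j s^U(x_j) \geq s^U(x)$, i.e. lower semicontinuity.

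Combining upper semicontinuity (from the infimum structure) with lower semicontinuity yields continuity of $s^U$. The main obstacle I anticipate is the circularity issue: Proposition \ref{propcont-2} gives $\eta_\lambda * f \to f$ \emph{uniformly on compacts when $f$ is continuous}, which is exactly what we are trying to prove about $s^U$. The clean way around this is to first prove $s^U$ is upper semicontinuous (immediate), deduce that $\limsup_{\lambda\to 0}\eta_\lambda * s^U(x) \leq s^U(x)$ for every $x$ by a standard measure-theoretic argument (Fatou plus the approximate-identity mass-one property from Proposition \ref{propcont-1}), and combine with the reverse inequality $\liminf_{\lambda\to 0}\eta_\lambda*s^U(x)\ge s^U(x)$ coming from the mean value inequality $s^U(x)\ge \eta_\lambda*s^U(x)$ read the other way is not available — so instead one uses that $\eta_\lambda*s^U(x)$ is monotone increasing as $\lambda\downarrow 0$ along the dyadic family (this monotonicity is exactly inequality \eqref{meanvalue} in the proof of Proposition \ref{propcont-3}, valid since $s^U$ satisfies the mean value inequality), so the limit $\lim_{\lambda\to 0}\eta_\lambda*s^U(x)$ exists and equals $\sup_\lambda \eta_\lambda*s^U(x) \le s^U(x)$; identifying this limit with $s^U(x)$ at every point then upgrades, via the continuity of each $\eta_\lambda*s^U$ and a Dini-type argument on the compact $\overline{\Omega}$, to uniform convergence and hence continuity of $s^U$. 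I would also double-check the edge behavior near $\partial\Omega$, where the matching $s^U = \gamma$ must glue continuously with the interior — this is automatic once continuity on all of $\bb{R}^2$ is phrased as above, since $\gamma$ is continuous and the mean value inequality holds across $\partial\Omega$ by Lemma \ref{sU-superh}.
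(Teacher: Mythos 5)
Your proposal correctly isolates the crux --- the circularity in applying Proposition \ref{propcont-2} to the not-yet-known-continuous $s^U$ --- but the resolution you offer does not close it. Monotonicity of $\lambda\mapsto \eta_\lambda*s^U(x)$ (inequality \eqref{meanvalue}) together with $s^U\ge\eta_\lambda*s^U$ only yields $\lim_{\lambda\to0}\eta_\lambda*s^U(x)=\sup_\lambda\eta_\lambda*s^U(x)\le s^U(x)$; the step you describe as ``identifying this limit with $s^U(x)$ at every point'' is precisely the missing content, and neither Fatou, nor upper semicontinuity of $s^U$, nor a Lebesgue-point argument supplies the reverse inequality $\ge$ at \emph{every} point. (Upper semicontinuity gives bounds in the same direction you already have; Lebesgue-point arguments give the identification only a.e.; and a Dini argument presupposes both that the pointwise limit is $s^U$ and that it is continuous.) Note also that $\supp(\eta_\lambda)\subset\overline{B_{M+\lambda}}$ rather than a ball of radius $O(\lambda)$, so $\eta_\lambda*s^U(x)$ is not a shrinking local average of $s^U$ around $x$; the approximate-identity property in Proposition \ref{propcont-2} genuinely uses continuity of the convolved function. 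In classical potential theory the infimum of superharmonic majorants can indeed fail to be lower semicontinuous (one passes to its l.s.c.\ regularization, which may differ on a polar set), so some structural input beyond the mean value inequality is unavoidable.

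The input the paper uses, and which your argument lacks, is a barrier built from the \emph{continuous obstacle}: since $\eta_\lambda*s^U\ge\eta_\lambda*\gamma\ge\gamma-\epsilon$ uniformly on a suitable neighborhood $U_1\supset\overline{U+B_M}$ (here only continuity of $\gamma$ is needed for Proposition \ref{propcont-2}), the function $g=\max(\eta_\lambda*s^U+\epsilon,\gamma)$ equals $\eta_\lambda*s^U+\epsilon$ on $U_1$, hence is continuous, superharmonic on $U$, and lies above $\gamma$; it is therefore an admissible competitor in the infimum \eqref{majorantU}, giving $s^U\le\eta_\lambda*s^U+\epsilon$ on $U$. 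Combined with $\eta_\lambda*s^U\le s^U$ this sandwiches $s^U$ within $\epsilon$ of a continuous function uniformly on $U$, yielding continuity. If you insert this competitor step into your argument, the rest of your outline (continuity on $\Omega^c$ from $s^U=\gamma$ there, upper semicontinuity from the infimum structure) goes through.
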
 

\begin{proof}
	We have noticed before that $s^U$ coincides with $\gamma$ in $\Omega^c$, then it is continuous in $\Omega^c$. We will prove that it is continuous in $U$.
	
	By Lemma \ref{sU-superh}, we have 
	$$s^U\geq \eta_\lambda*s^U$$
	for $\lambda$ sufficiently small.
	
	Consider now a bounded, open set $U_1$, such that, $U_1\supset\overline{U+B_M}$.
	
	Since $\gamma$ is continuous, by Proposition~\ref{propcont-2} we have that $\eta_\lambda*\gamma\rightarrow\gamma$ uniformly in $\overline{U_1}$ as $\lambda\rightarrow0$, then for $\epsilon>0$ we take $\lambda$ small, so that 
	
	\begin{equation}\label{prop-continuidade-1}
		\eta_\lambda*s^U(x)\geq\eta_\lambda*\gamma(x)\geq\gamma(x)-\epsilon\text{~for~}x\in U_1.
	\end{equation}
	
	Note that $\eta_\lambda*s^U$ is a continuous function. By Lemma \ref{sU-superh} and Proposition \ref{propcont-3} we conclude that it is also superharmonic. 
	
	Consider the function $g=\max(\eta_\lambda*s^U+\epsilon,\gamma)$. By \eqref{prop-continuidade-1}, $g$ coincides with $\eta_\lambda*s^U+\epsilon$ in $U_1$ and, hence, it is superharmonic in $U$. Since $g$ is above $\gamma$, we have $g\geq s^U$. We have proved
	$$\eta_\lambda*s^U(x)\leq s^U(x)\leq \eta_\lambda*s^U(x)+\epsilon\text{~for~}x\in U.$$
	Thus $s^U$ is an increasing limit of continuous functions in $U$ and, hence, lower semicontinuous. Since $s^U$ is defined as an infimum of continuous functions, it is also upper semicontinuous.  
\end{proof}

By Lemmas \ref{sU-superh} and \ref{sU-cont} and Proposition \ref{propcont-3}, we have that $s^U$ is continuous and superharmonic. Since $s^U$ is above the obstacle $\gamma$, we obtain $s=s^U$. 

We have just proved the following proposition which states the continuity of $s$.

\begin{proposition}\label{continuityofS}
	The majorant $s$ defined in \eqref{majorantcontinuous} is a continuous function. Moreover, if $U$ is a bounded open set satisfying \eqref{U-assumption}, then $s=s^U$.
\end{proposition}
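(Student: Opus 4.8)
The plan is to assemble Proposition~\ref{continuityofS} directly from the three preceding results, Lemma~\ref{sU-superh}, Lemma~\ref{sU-cont}, and Proposition~\ref{propcont-3}, together with the structural fact from Proposition~\ref{odom-supp-cont} that the odometer $u=s-\gamma$ has compact support, say inside a ball $\Omega$. The key observation is that the two majorants $s$ and $s^U$ agree once $U$ is large enough, so that continuity of $s^U$ transfers to continuity of $s$. First I would fix a bounded open set $U$ satisfying the containment \eqref{U-assumption}, $\Omega+B_{M+1}\subset U$; such a $U$ exists because $\Omega$ is a fixed ball. By Lemma~\ref{sU-cont} the function $s^U$ is continuous, and by Lemma~\ref{sU-superh} it satisfies the mean value inequality $s^U(x)\geq \eta_\lambda*s^U(x)$ for all $x$ and all $\lambda$ small; hence, by the ``if'' direction of Proposition~\ref{propcont-3}, $s^U$ is superharmonic (with respect to $L_M$) on all of $\bb{R}^2$, in particular on any open set.

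Next I would check that $s^U$ is an admissible competitor in the infimum defining $s$ in \eqref{majorantcontinuous}. That infimum is taken over functions that are continuous, superharmonic, and lie above $\gamma$; $s^U$ is continuous (Lemma~\ref{sU-cont}), superharmonic everywhere (previous step), and satisfies $s^U\geq\gamma$ by its very definition \eqref{majorantU}. Therefore $s\leq s^U$. For the reverse inequality, note that the infimum defining $s^U$ in \eqref{majorantU} is over a \emph{larger} class of functions — superharmonic merely on $U$ rather than on all of $\bb{R}^2$ — so automatically $s^U\leq s$. Combining the two gives $s=s^U$, and since $s^U$ is continuous, so is $s$. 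This proves both assertions of the proposition.

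I do not expect a genuine obstacle here: the real work has already been done in Lemmas~\ref{sU-superh} and~\ref{sU-cont}, whose proofs rely on the approximate-identity machinery of Propositions~\ref{propcont-1} and~\ref{propcont-2} and on the compact support of $u$ from Proposition~\ref{odom-supp-cont}. The only point that needs a moment of care is making sure the ``superharmonic on $U$'' versus ``superharmonic on $\bb{R}^2$'' distinction is handled in the right direction when comparing the two infima — it is precisely this asymmetry that forces $s^U\leq s$ trivially while $s\leq s^U$ requires the upgrade of $s^U$ to a globally superharmonic function via Proposition~\ref{propcont-3}. One should also confirm that the choice of $U$ is harmless: any two admissible $U$'s give the same $s^U$ because each equals $s$, so the statement ``$s=s^U$ for every $U$ satisfying \eqref{U-assumption}'' is consistent. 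With these remarks the proof is a short synthesis of the established lemmas.
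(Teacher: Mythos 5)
Your proposal is correct and follows essentially the same route as the paper: the paper also deduces Proposition~\ref{continuityofS} by combining Lemmas~\ref{sU-superh} and~\ref{sU-cont} with Proposition~\ref{propcont-3} to conclude that $s^U$ is continuous and superharmonic, hence an admissible competitor giving $s\leq s^U$, while $s^U\leq s$ holds because its defining infimum is over a larger class. Your write-up merely makes explicit the two-inequality comparison that the paper states in a single line.
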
 

\section{Convergence of Green's functions}\label{GREEN}

In this section we prove Theorem \ref{GMntoGM} which tells us about the convergence of the Green's functions. We refer to Section \ref{convergence-green-proofs} for the proofs of the auxiliary results presented here.

Recall from \eqref{greenformuladimn} and \eqref{greenformulacont} the definition of $G_{M,n}$ and $G_M$. The first step in order to prove Theorem \ref{GMntoGM} is to investigate the convergence of $\psi_{M,n}$ to $\psi_M$ defined in \eqref{psiMn} and \eqref{psiM}, respectively.

A simple change of variables shows that 
\begin{equation}\label{psi-change}
	\psi_M(\theta)=\frac{1}{M^\alpha}\psi_1(\theta M). 
\end{equation}
For $\theta\in B_{\frac{1}{M}}$, by \eqref{psi1-1} we have that
\begin{equation}\label{psiM-pequeno}
	\psi_M(\theta)=\frac{\sigma_M^2}{4}|\theta|^2+|\theta|^4g_M(\theta)
\end{equation}
for a bounded function $g_M:B_{\frac{1}{M}}\to\bb R$. 

In the same way, for $\theta\in B_{\frac{1}{M}}^c$, by \eqref{psi1-2}
\begin{equation}\label{psiM-grande}
	\psi_M(\theta)=c_\alpha|\theta|^\alpha+h_M(\theta)
\end{equation}
for a bounded function $h_M:B_{\frac{1}{M}}^c\to\bb R$. 

This suggests that we must split the integrals that define $G_M$ and $G_{M,n}$ into three different regions: $B_{\frac{1}{M}}$, $[-n\pi,n\pi]^2\setminus B_{\frac{1}{M}}$, and $\bb{R}^2\setminus [-n\pi,n\pi]^2$. Recall from \eqref{FnDef} the definition of $F_n$.

For $\theta\in [-n\pi,n\pi]^2$, denote $\widehat{\psi}_{M,n}(\theta)=\frac{1}{n^2}\sum_{y\in B_M^{::}\cup\partial B_M^{::}}\frac{F_n(y)(1-\cos(\theta\cdot y))}{|y|^{2+\alpha}}$. Then we have 

$$\psi_{M,n}(\theta)=n^\alpha c_nk_n\sum_{|y|=\frac{1}{n}}(1-\cos(\theta\cdot y))+c_n\widehat{\psi}_{M,n}(\theta).$$
Note that for $\theta\in [-n\pi,n\pi]^2$, we have $\frac{\theta}{n}$ is bounded; then we use Taylor's theorem to ensure that there exists a positive constant $C$, such that
$$\left|2\left(1-\cos\left(\frac{\theta_1}{n}\right)\right)+2\left(1-\cos\left(\frac{\theta_2}{n}\right)\right)-\frac{|\theta|^2}{n^2}\right|\leq\frac{C|\theta|^4}{n^4},$$
then
\begin{equation}\label{psiMn-hat}
	\Big|\psi_{M,n}(\theta)-c_n\widehat{\psi}_{M,n}(\theta)-\frac{c_n k_n|\theta|^2}{n^{2-\alpha}}\Big|\leq\frac{C|\theta|^4}{n^4}.
\end{equation}

The next two lemmas describe the convergence of $\psi_{M,n}$ to $\psi_M$ when $n$ goes to infinity.
\begin{lemma}\label{psiM-pequeno-estimate}
	For $\theta\in B_{\frac{1}{M}}$ we have 
	$$|\psi_{M,n}(\theta)-\frac{c_n}{c}\psi_M(\theta)|\leq\frac{C|\theta|^4}{n^2},$$
	where the constant $C$ only depends on $\alpha$.
\end{lemma}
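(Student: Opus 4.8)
The plan is to work with the $M=1$ case first and then rescale, since the change of variables \eqref{psi-change} reduces everything to comparing $\psi_{1,n}$ with $\tfrac{c_n}{c}\psi_1$ on $B_1$. On this small-frequency region the right objects to compare are the quadratic approximations: by \eqref{psiM-pequeno} we have $\psi_M(\theta)=\tfrac{\sigma_M^2}{4}|\theta|^2+|\theta|^4 g_M(\theta)$ with $g_M$ bounded, and by \eqref{psiMn-hat} we have the analogous Taylor expansion for $\psi_{M,n}$, namely $\psi_{M,n}(\theta)=c_n\widehat\psi_{M,n}(\theta)+\tfrac{c_n k_n|\theta|^2}{n^{2-\alpha}}+O(|\theta|^4/n^4)$. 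So the first step is to insert these two expansions into the difference and isolate the terms that must cancel.

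The heart of the matter is to estimate the discrete Riemann-sum-type quantity $\widehat\psi_{M,n}(\theta)=\tfrac{1}{n^2}\sum_{y\in B_M^{::}\cup\partial B_M^{::}} F_n(y)(1-\cos(\theta\cdot y))/|y|^{2+\alpha}$ and show it approximates $\tfrac1c\psi_M(\theta)$ to the required order. For $\theta\in B_{1/M}$ one has $|\theta\cdot y|\le |\theta| M = O(1)$ on the summation range, so $1-\cos(\theta\cdot y)=\tfrac12(\theta\cdot y)^2 + O((\theta\cdot y)^4)$; plugging this in, the leading piece is $\tfrac{|\theta|^2}{2n^2}\sum_y F_n(y) (\hat\theta\cdot y)^2/|y|^{2+\alpha}$ up to an $O(|\theta|^4)$ error coming from the quartic Taylor remainder, whose sum $\tfrac1{n^2}\sum_y F_n(y)|y|^2/|y|^{2+\alpha}$ is uniformly bounded. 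Second, I would compare the Riemann sum $\tfrac{1}{n^2}\sum_{y} F_n(y)(\hat\theta\cdot y)^2/|y|^{2+\alpha}$ with the integral $\int_{B_M}(\hat\theta\cdot z)^2/|z|^{2+\alpha}\,dz = \tfrac1c\cdot\tfrac{4}{|\theta|^2}\psi_M(\theta)$ at $\theta$ small — but here the definition \eqref{FnDef} of $F_n$ as $n^2\mu(y^\square\cap A_{r/n,M})$ is exactly designed so that $\tfrac1{n^2}F_n(y)$ is the area of the square $y^\square$ intersected with the annulus, which makes $\tfrac1{n^2}\sum_y F_n(y)\phi(y) = \int_{A_{r/n,M}}\phi(\lfloor z\rceil)\,dz$ an honest exact mid-point-rounding integral of the piecewise-constant sample of $\phi(z)=(\hat\theta\cdot z)^2/|z|^{2+\alpha}$. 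The error in replacing $\phi(\lfloor z\rceil)$ by $\phi(z)$ over this region is controlled by $\|\nabla\phi\|$ times the mesh $1/n$ times the measure, and since $\phi$ is smooth away from $0$ while the inner hole has radius $r/n$, a short computation shows this is $O(1/n^2)$ after accounting for the singularity near the origin (the contribution from $|z|\lesssim 1/n$ is itself $O(1/n^2)$ since $\int_{|z|<1/n}|z|^{-\alpha}dz = O(n^{\alpha-2})$ — actually $O(1/n^2)$ only after the $|\theta|^2$ prefactor is multiplied back, giving the stated bound). The bound from the annular hole $B_{r/n}$ removed from $B_M$ versus the full $B_M$ in the definition of $\psi_M$ contributes $\int_{B_{r/n}}|z|^{-\alpha}dz \cdot |\theta|^2 = O(n^{\alpha-2})|\theta|^2$; I must check this is absorbed into $C|\theta|^4/n^2$ on $B_{1/M}$ — wait, $n^{\alpha-2}$ is \emph{larger} than $n^{-2}$, so this term needs more care: the point is the correction constant $k_n$ in \eqref{kn} is defined precisely to cancel this $O(n^{\alpha-2})$ discrepancy, and indeed the $\tfrac{c_nk_n|\theta|^2}{n^{2-\alpha}}$ term in \eqref{psiMn-hat} together with the $\tfrac{\sigma_M^2}{4}|\theta|^2$ comparison is where the cancellation of the dangerous order happens.

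So the key algebraic step is: collect the coefficient of $|\theta|^2$ from $\psi_{M,n}(\theta)$, which is $\tfrac{c_nk_n}{n^{2-\alpha}} + c_n\cdot\tfrac{1}{2n^2}\sum_y F_n(y)(\hat\theta\cdot y)^2/|y|^{2+\alpha}$, and show it equals $\tfrac{c_n}{c}\cdot\tfrac{\sigma_M^2}{4} + O(1/n^2)$; by the definition of $\sigma_M^2 = c\int_{B_M}|z|^{-\alpha}dz$ and $k_n$ in \eqref{kn}, the $n^{\alpha-2}$-order terms cancel identically (this is the ``carefully chosen constant'' remark right after \eqref{kdim1}), leaving a genuine $O(1/n^2)$ remainder from the Riemann-sum-to-integral comparison of a smooth integrand on $A_{r/n,M}$ with a singularity only at scale $1/n$. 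Combining this with the $O(|\theta|^4/n^4)$ from \eqref{psiMn-hat}, the $O(|\theta|^4)\cdot O(1/n^2)$ from the quartic Taylor remainders (which carry a $1/n^2$ because the leading such sum is already subsumed in $g_M$ up to the same mesh error), and using Lemma \ref{cn-to-c} to replace $c_n$ by $c$ where needed, yields $|\psi_{M,n}(\theta) - \tfrac{c_n}{c}\psi_M(\theta)| \le C|\theta|^4/n^2$ on $B_{1/M}$. The main obstacle I anticipate is bookkeeping the singularity at the origin: one must verify that every Riemann-sum error near $z=0$, where the integrand blows up like $|z|^{-\alpha}$ but the hole $B_{r/n}$ protects us, actually contributes $O(1/n^2)$ (times the appropriate power of $|\theta|$) rather than the naive $O(n^{\alpha-2})$, and this works out only because of the exact cancellation engineered through $k_n$.
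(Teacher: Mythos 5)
Your overall architecture is exactly the paper's: Taylor-expand $1-\cos(\theta\cdot y)$ (valid on $B_{1/M}$ since $|\theta\cdot y|\le 1$), observe that the coefficient of $|\theta|^2$ in $\psi_{M,n}$ matches $\tfrac{c_n}{c}\cdot\tfrac{\sigma_M^2}{4}$ \emph{exactly} because the definition \eqref{kn} of $k_n$ makes the nearest-neighbour term cancel the $n^{\alpha-2}$-order integral-versus-sum discrepancy identically, and then bound the quartic Taylor remainder's Riemann-sum error by $C|\theta|^4/n^2$ together with the $O(|\theta|^4/n^4)$ error from \eqref{psiMn-hat}. You correctly caught the dangerous $n^{\alpha-2}$ term and its cancellation via $k_n$, which is the heart of the lemma.

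There is, however, one step where the mechanism you state does not deliver the rate you claim. For the quartic remainder you bound the Riemann-sum error by ``$\|\nabla\phi\|$ times the mesh $1/n$ times the measure.'' For the integrand $\phi(z)=(\theta\cdot z)^4 g(\theta\cdot z)/|z|^{2+\alpha}$ one has $|\nabla\phi|\lesssim|\theta|^4|z|^{1-\alpha}$, which is integrable on $A_{r/n,M}$ uniformly in $n$, so this first-order bound yields only $O(|\theta|^4/n)$ --- short of the required $O(|\theta|^4/n^2)$. The missing ingredient is the midpoint (second-order) cancellation: since $y$ is the centroid of $y^\square$, the first-order Taylor term integrates to zero over each square, and one should write the per-square error as $\tfrac12\int_{[-\frac{1}{2n},\frac{1}{2n}]^2}\bigl(\phi(y+z)+\phi(y-z)-2\phi(y)\bigr)dz$ and bound it by second derivatives, $|D^2\phi|\lesssim|\theta|^4|y|^{-\alpha}$, times $n^{-4}$; summing over the lattice then gives $\tfrac{|\theta|^4}{n^4}\sum_y|y|^{-\alpha}\lesssim|\theta|^4/n^2$, and the separately treated contribution of $B_{r/n}$ is $O(|\theta|^4 n^{\alpha-4})$, which is even smaller. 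This symmetrization is exactly what the paper does (the functions $f_{y,\theta}$ in its proof); your phrase ``mid-point-rounding'' points in the right direction, but the bound you actually write down would not close the argument. With that repair the proposal coincides with the paper's proof.
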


\begin{lemma}\label{psiM-grande-estimate}
	For $\theta\in [-n\pi,n\pi]^2\setminus B_{\frac{1}{M}}$ there exists an uniformly bounded family of functions $r_n$ such that
	$$\psi_{M,n}(\theta)=\frac{c_n}{c}\psi_M(\theta)+\frac{r_n(\theta)|\theta|^{2+\alpha}}{n^2}.$$
	Moreover there exists a positive constant $C$ depending only on $\alpha$ such that sequence $r_n$ satisfies
	\begin{itemize}
		\item[(i)] $\frac{d}{d_{\theta_i}}(r_n(\theta)|\theta|^{2+\alpha})\leq C|\theta|^{1+\alpha}$ for $i=1,2,$
		\item[(ii)] $\Delta(r_n(\theta)|\theta|^{2+\alpha})\leq C|\theta|^\alpha$.
	\end{itemize} 
\end{lemma}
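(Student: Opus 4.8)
The plan is to prove Lemma \ref{psiM-grande-estimate} by a careful Taylor expansion of the sum defining $\psi_{M,n}$ around the integral defining $\psi_M$, keeping track of the error term as a function $r_n(\theta)$ with the required regularity. First I would start from the decomposition $\psi_{M,n}(\theta)=n^\alpha c_n k_n\sum_{|y|=\frac{1}{n}}(1-\cos(\theta\cdot y))+c_n\widehat{\psi}_{M,n}(\theta)$, together with estimate \eqref{psiMn-hat}, so that the nearest-neighbor part contributes only $\frac{c_n k_n|\theta|^2}{n^{2-\alpha}}$ up to an error of order $|\theta|^4/n^4$; by the definition of $k_n$ in \eqref{kn} this term is itself of order $|\theta|^2/n^{2}$ times a bounded factor, hence absorbable into $\frac{r_n(\theta)|\theta|^{2+\alpha}}{n^2}$ for $\theta$ bounded below away from zero (which is exactly the regime $\theta\notin B_{1/M}$), after also accounting for the discrepancy $\frac{c_n}{c}\psi_M-\psi_M$ which by Lemma \ref{cn-to-c} is $O(n^{-\alpha})$ and hence again of the right form. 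So the heart of the matter is comparing $c_n\widehat{\psi}_{M,n}(\theta)$ with $\frac{c_n}{c}\psi_M(\theta)=\frac{c_n}{c}\cdot c\int_{B_M}\frac{1-\cos(\theta\cdot y)}{|y|^{2+\alpha}}dy=c_n\int_{B_M}\frac{1-\cos(\theta\cdot y)}{|y|^{2+\alpha}}dy$; i.e. comparing the Riemann sum $\frac{1}{n^2}\sum_{y\in B_M^{::}\cup\partial B_M^{::}}\frac{F_n(y)(1-\cos(\theta\cdot y))}{|y|^{2+\alpha}}$ with the integral $\int_{B_M}\frac{1-\cos(\theta\cdot y)}{|y|^{2+\alpha}}dy$.

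The key step is thus a quantitative Riemann-sum estimate for the function $\Phi_\theta(y)=\frac{1-\cos(\theta\cdot y)}{|y|^{2+\alpha}}$ on $B_M$. I would split $B_M$ into an inner region $B_{\epsilon}$ near the singularity and the outer annulus $B_M\setminus B_\epsilon$; for a suitable $\epsilon$ of order $1/n$, on the outer region $\Phi_\theta$ is smooth with derivative bounds that scale correctly in $\theta$ (using $|1-\cos(\theta\cdot y)|\le |\theta|^2|y|^2/2$ near zero and $\le 2$ always, and $|\nabla_y\Phi_\theta|\lesssim |\theta||y|^{-1-\alpha}$), so the standard midpoint-rule error $\frac{1}{n^2}\times$(sum of Hessian bounds over cells) is controlled; on the inner region one bounds both the sum and the integral directly using the $|\theta|^2$ bound, and crucially one uses that $F_n$ is defined via Lebesgue measure of $y^\square\cap A_{r/n,M}$, so the boundary cells and the excised disk $B_{r/n}$ are handled exactly by the measure-theoretic definition of $F_n$ rather than by crude counting. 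The output of this step should be an error of the form $\frac{r_n(\theta)|\theta|^{2+\alpha}}{n^2}$ where the $|\theta|^{2+\alpha}$ factor comes from balancing the $|\theta|^2$ small-$y$ behaviour against the $|\theta|^\alpha$ scale already visible in \eqref{psiM-grande}, and $r_n$ is uniformly bounded for $|\theta|\ge 1/M$.

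For the derivative bounds (i) and (ii), I would \emph{not} differentiate $r_n$ itself, but rather differentiate the identity $r_n(\theta)|\theta|^{2+\alpha} = n^2\big(\psi_{M,n}(\theta)-\frac{c_n}{c}\psi_M(\theta)\big)$ directly: each derivative $\partial_{\theta_i}$ falls on the $\cos(\theta\cdot y)$ factors producing $y_i\sin(\theta\cdot y)$ in the sum and $y_i\sin(\theta\cdot y)$ in the integral, and similarly $\Delta_\theta$ produces $|y|^2\cos(\theta\cdot y)$; one then re-runs the same Riemann-sum comparison for these new integrands. Since differentiating once costs a factor $|y|$ (killing one power in the denominator) and the small-$y$ bound improves by one order of $|\theta|$ on the differentiated cosine, the net effect is that $\partial_{\theta_i}(r_n|\theta|^{2+\alpha})\lesssim |\theta|^{1+\alpha}$ and $\Delta(r_n|\theta|^{2+\alpha})\lesssim|\theta|^{\alpha}$, as claimed. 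The main obstacle I anticipate is getting the \emph{exponents} right uniformly: one has to verify that the contributions from the near-singularity region (where $F_n$ and the excised ball $B_{r/n}$ interact with the lattice) genuinely produce the clean powers $|\theta|^{2+\alpha}/n^2$, $|\theta|^{1+\alpha}$, $|\theta|^{\alpha}$ rather than something with a worse $n$-dependence or an extra logarithm; this is precisely the place where the measure-theoretic definition of $F_n$ and the correction constant $k_n$ are doing real work, and it will require splitting the domain at the $1/n$ scale and estimating the inner and outer pieces separately with the appropriate $\theta$-dependent bounds on $1-\cos$, $\sin$, and $\cos$ and their products with powers of $|y|$.
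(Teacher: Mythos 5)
There is a genuine gap, and it sits at the exact point the paper is designed around. You claim that the nearest-neighbor contribution $\frac{c_nk_n|\theta|^2}{n^{2-\alpha}}$ is ``of order $|\theta|^2/n^2$ times a bounded factor'' and hence absorbable into $\frac{r_n(\theta)|\theta|^{2+\alpha}}{n^2}$ for $|\theta|\geq 1/M$. This is false: by \eqref{kn} and the proof of Lemma \ref{knpositive}, $k_n$ converges to a \emph{positive constant} $k$ (with $|k_n-k|\leq Cn^{-\alpha}$), not to zero at rate $n^{-\alpha}$. So this term is genuinely of order $|\theta|^2/n^{2-\alpha}$, which for $|\theta|$ of order one exceeds $|\theta|^{2+\alpha}/n^2$ by a factor $n^{\alpha}$. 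Likewise, your naive midpoint-rule bound for the Riemann sum of $\Phi_\theta(y)=(1-\cos(\theta\cdot y))|y|^{-(2+\alpha)}$ over $B_M\setminus B_{1/n}$ gives $\frac{1}{n^4}\sum_y|D^2\Phi_\theta(y)|\lesssim\frac{|\theta|^2}{n^2}\int_{B_M\setminus B_{1/n}}|y|^{-(2+\alpha)}dy\sim|\theta|^2/n^{2-\alpha}$ — the same too-large order. The point of the lemma (and of the whole correction term in \eqref{probdimn}) is that these two quantities of order $n^{\alpha-2}$ \emph{cancel exactly}: expanding $1-\cos(\theta\cdot y)=\frac{(\theta\cdot y)^2}{2}+O((\theta\cdot y)^4)$ on the ball $B_{1/|\theta|}$ (note: the split must be at radius $1/|\theta|$, not $1/n$, so that $|\theta\cdot y|\le 1$ there), the quadratic part of the sum-minus-integral discrepancy is $\frac{|\theta|^2}{4}\bigl(\int_{B_M}|y|^{-\alpha}dy-\frac{1}{n^2}\sum F_n(y)|y|^{-\alpha}\bigr)=\frac{k_n|\theta|^2}{n^{2-\alpha}}$ by the very definition \eqref{kn}, up to a correction supported on the annulus $A_{1/|\theta|,M}$ which is genuinely $O(|\theta|^{2+\alpha}/n^2)$; this term then cancels the nearest-neighbor term from \eqref{psiMn-hat}. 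Discarding it instead of cancelling it yields only the rate $n^{\alpha-2}$ that the introduction explicitly says one gets \emph{without} the correction.

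The remaining ingredients of your plan are sound once this is fixed: the fourth-order Taylor remainder on $B_{1/|\theta|}$ and the symmetrized second-order Riemann-sum error on the annulus $A_{1/|\theta|,M}$ do give $O(|\theta|^{2+\alpha}/n^2)$ (the factor $|\theta|^\alpha$ coming from $\int_{|y|\ge 1/|\theta|}|y|^{-(2+\alpha)}dy\sim|\theta|^\alpha$ and from $\int_{|y|\le 1/|\theta|}|\theta|^4|y|^{4}|y|^{-(2+\alpha)}\cdot|y|^{-2}\,$-type bounds), and your strategy for (i) and (ii) — differentiating the identity $r_n(\theta)|\theta|^{2+\alpha}=n^2(\psi_{M,n}-\frac{c_n}{c}\psi_M)$ and re-running the same comparison — is exactly what the paper does. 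But as written, the proof of the main identity does not close.
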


Let us introduce the following notations:
\begin{eqnarray*}
	\widetilde{G}_{M,n}(x)&=&\int_{B_{\frac{1}{M}}}\frac{\cos(\theta\cdot x)-1}{\psi_{M,n}(\theta)}d\theta+\int_{[-n\pi,n\pi]^2 \setminus B_{\frac{1}{M}}}\frac{\cos(\theta\cdot x)}{\psi_{M,n}(\theta)}d\theta,\\
	\widetilde{G}_M(x)&=&\int_{B_{\frac{1}{M}}}\frac{\cos(\theta\cdot x)-1}{\psi_M(\theta)}d\theta+\int_{\bb{R}^2\setminus B_{\frac{1}{M}}}\frac{\cos(\theta\cdot x)}{\psi_M(\theta)}d\theta.
\end{eqnarray*}
The next lemma states the convergence of the auxiliary functions defined above. In the following we will conclude the proof of Theorem \ref{GMntoGM} which is the subject of this section.
\begin{lemma}\label{GMntoGMtilde}
	There exists a constant $C$ which only depends on $\alpha$, such that
	$$\left|\frac{c}{c_n}\widetilde{G}_M(x)-\widetilde{G}_{M,n}(x)\right|\leq\frac{C}{n^\alpha}\left(1+\frac{1}{|x|}+\frac{1}{|x|^2}\right).$$
\end{lemma}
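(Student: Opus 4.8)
\textbf{Proof plan for Lemma \ref{GMntoGMtilde}.}

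The plan is to estimate the difference $\frac{c}{c_n}\widetilde{G}_M(x)-\widetilde{G}_{M,n}(x)$ region by region, using the three-region splitting already built into the definitions \eqref{GMn-tilde} and \eqref{GM-tilde}, namely $B_{1/M}$, $[-n\pi,n\pi]^2\setminus B_{1/M}$, and $\mathbb R^2\setminus[-n\pi,n\pi]^2$ (this last region contributes only to $\widetilde G_M$). In each region I would write the integrand difference as
$$
\frac{\cos(\theta\cdot x)(-1)}{\psi_{M,n}(\theta)}-\frac{c}{c_n}\frac{\cos(\theta\cdot x)(-1)}{\psi_M(\theta)}
=\cos(\theta\cdot x)(\cdots)\cdot\frac{\frac{c}{c_n}\psi_{M,n}(\theta)-\psi_M(\theta)}{\psi_{M,n}(\theta)\psi_M(\theta)}\cdot\frac{c_n}{c},
$$
so that everything is controlled by the numerator estimates of Lemmas \ref{psiM-pequeno-estimate} and \ref{psiM-grande-estimate} together with the lower bounds $\psi_M(\theta)\gtrsim|\theta|^2$ on $B_{1/M}$ (from \eqref{psiM-pequeno}) and $\psi_M(\theta)\gtrsim|\theta|^\alpha$ off $B_{1/M}$ (from \eqref{psiM-grande}), and the analogous lower bounds for $\psi_{M,n}$ coming from \eqref{psiMn-hat} and \eqref{psiMn-hat} combined with those for $\psi_M$. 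I would also use Lemma \ref{cn-to-c} to replace $c_n/c$ by $1$ up to an error $O(n^{-\alpha})$ wherever convenient.

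First, on $B_{1/M}$: by Lemma \ref{psiM-pequeno-estimate} the numerator is $O(|\theta|^4/n^2)$, while the denominator $\psi_{M,n}(\theta)\psi_M(\theta)$ is $\gtrsim|\theta|^4$; hence the integrand difference is $O(1/n^2)$ and integrating over the bounded ball $B_{1/M}$ gives a contribution $O(1/n^2)$, which is absorbed into the $K/n^\alpha$ term since $\alpha<2$. (The $\cos(\theta\cdot x)-1$ numerator is bounded, so there is no $x$-dependence here.) Second, on $[-n\pi,n\pi]^2\setminus B_{1/M}$: by Lemma \ref{psiM-grande-estimate} the numerator is $r_n(\theta)|\theta|^{2+\alpha}/n^2$ with $r_n$ uniformly bounded, and the denominator is $\gtrsim|\theta|^{2\alpha}$, so the integrand difference is bounded by $C|\theta|^{2-\alpha}/n^2$. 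Integrating $|\theta|^{2-\alpha}$ over an annulus up to radius $\sim n$ produces a factor $\sim n^{4-\alpha}$, giving $O(n^{2-2\alpha})$, again $\le C/n^\alpha$. However, the crude bound $|\cos(\theta\cdot x)|\le 1$ is wasteful near the origin when $|x|$ is small; to get the stated $1+\frac1{|x|}+\frac1{|x|^2}$ shape I would instead split this annular region at radius $1/|x|$: for $|\theta|\le 1/|x|$ use $|\cos(\theta\cdot x)|\le1$, and for $|\theta|>1/|x|$ integrate by parts once (or twice) in $\theta$, moving the oscillation onto $\frac{d}{d\theta_i}\big(\frac{c}{c_n}\psi_{M,n}^{-1}-\psi_M^{-1}\big)$ and using the derivative bounds (i)--(ii) of Lemma \ref{psiM-grande-estimate}; each integration by parts gains a factor $1/|x|$ at the price of a boundary term on $\partial B_{1/|x|}$ and on $\partial[-n\pi,n\pi]^2$. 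Third, the tail $\mathbb R^2\setminus[-n\pi,n\pi]^2$ contributes only $\int_{|\theta|\gtrsim n}\frac{\cos(\theta\cdot x)}{\psi_M(\theta)}\,d\theta$; since $\psi_M(\theta)\gtrsim|\theta|^\alpha$ with $\alpha>1$, a single integration by parts (again using boundedness of $\frac{d}{d\theta_i}\psi_M^{-1}\lesssim|\theta|^{-1-\alpha}$) shows this is $O\big(\frac1{|x|}\,n^{-\alpha}\big)$, and without integration by parts one gets $O(n^{2-\alpha})$ which, combined, yields the claimed bound.

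The main obstacle I expect is the second region, specifically extracting the optimal $n^{-\alpha}$ rate with the correct $1/|x|$ and $1/|x|^2$ singular factors. The naive pointwise bound gives only $n^{2-2\alpha}$, which is worse than $n^{-\alpha}$ unless one is careful, and the $x$-dependence forces the integration-by-parts/boundary-term bookkeeping: one must check that the boundary contributions on $\partial B_{1/|x|}$ scale like $n^{-2}\cdot|x|^{-1}\cdot(\text{length})\cdot(\text{integrand})$ and actually sum to something of the stated order, and that the boundary term on $\partial[-n\pi,n\pi]^2$ (where the truncation of the discrete integral lives) is genuinely of size $n^{-\alpha}$ and not larger — this is exactly where the uniform bounds on $r_n$ and its derivatives from Lemma \ref{psiM-grande-estimate} are essential. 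Once the bookkeeping in this region is done, the other two regions are routine.
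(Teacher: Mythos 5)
Your three--region splitting, the use of Lemmas \ref{psiM-pequeno-estimate} and \ref{psiM-grande-estimate} for the numerator, and the idea of exploiting the oscillation of $\cos(\theta\cdot x)$ by integration by parts all match the paper's proof in outline. The treatment of $B_{\frac1M}$ is correct. But there is a genuine gap in the second and third regions, precisely at the point you flag as the ``main obstacle'': the boundary terms on $\partial[-n\pi,n\pi]^2$ are \emph{not} individually of size $n^{-\alpha}$, and no amount of uniform control on $r_n$ and its derivatives fixes this. Concretely, after integrating by parts (the paper uses $\cos(\theta\cdot x)=-|x|^{-2}\Delta\cos(\theta\cdot x)$ and Green's identity), the boundary contribution
$\frac{1}{|x|^2}\int_{\partial[-n\pi,n\pi]^2}\psi_M(\theta)^{-1}\,\partial_\nu\cos(\theta\cdot x)\,dS$
has perimeter $\sim n$, $\psi_M^{-1}\sim n^{-\alpha}$ and $\partial_\nu\cos(\theta\cdot x)\sim|x|$, so it is of order $n^{1-\alpha}/|x|$, which is strictly worse than $n^{-\alpha}$ for $\alpha<2$. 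The same obstruction defeats your estimate of the tail: a single integration by parts on $\bb R^2\setminus[-n\pi,n\pi]^2$ gives a bulk term $\frac{1}{|x|}\int_{|\theta|\gtrsim n}|\theta|^{-1-\alpha}d\theta\sim n^{1-\alpha}/|x|$ and a boundary term of the same size, not the $O(n^{-\alpha}/|x|)$ you claim.

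The paper's resolution, which your proposal is missing, is twofold: (a) the boundary terms on $\partial[-n\pi,n\pi]^2$ involving $\psi_{M,n}^{-1}$ vanish identically because $\psi_{M,n}$ is periodic with period cell $[-n\pi,n\pi]^2$ (see \eqref{25}); and (b) the remaining non-periodic boundary terms involving $\psi_M^{-1}$, collected into the quantity $T_n(x)$ of \eqref{T}, appear with opposite signs in the estimate of $f_2-f_{2,n}$ (inner normal of $[-n\pi,n\pi]^2\setminus B_{\frac1M}$) and of $f_3$ (inner normal of $\bb R^2\setminus[-n\pi,n\pi]^2$), and cancel exactly when the three pieces are summed in \eqref{23}. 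Only the bulk terms and the boundary terms on $\partial B_{\frac1M}$ survive, and those are indeed $O(n^{-\alpha}(|x|^{-1}+|x|^{-2}))$ and $O(n^{-2}(|x|^{-1}+|x|^{-2}))$ respectively. Two smaller points: your intermediate claim that the crude pointwise bound on the annulus gives $O(n^{2-2\alpha})\le Cn^{-\alpha}$ is wrong on both counts (the integral $\int^{n}|\theta|^{2-\alpha}n^{-2}\,d\theta$ is $O(n^{2-\alpha})$, which diverges, and in any case $2-2\alpha>-\alpha$ for $\alpha<2$); and your proposed split at radius $1/|x|$ is unnecessary once one integrates by parts globally via the Laplacian, since the inner boundary $\partial B_{\frac1M}$ is fixed and contributes only $O(n^{-2}(|x|^{-1}+|x|^{-2}))$.
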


\begin{proof}[Proof of Theorem \ref{GMntoGM}]
	Note that
	\begin{eqnarray*}
		G_{M,n}(x)&=&\frac{1}{(2\pi)^2}(\widetilde{G}_{M,n}(x)-\widetilde{G}_{M,n}(x_0)),\\
		G_M(x)&=&\frac{1}{(2\pi)^2}(\widetilde{G}_M(x)-\widetilde{G}_M(x_0)).
	\end{eqnarray*} 
	By Lemma \ref{GMntoGMtilde} 
	$$\Big|\frac{c}{c_n}G_M(x)-G_{M,n}(x)\Big|\leq\frac{C}{n^\alpha}\Big(1+\frac{1}{|x|}+\frac{1}{|x|^2}\Big).$$
	Let us define $\beta_n=\frac{2}{\pi\sigma_M^2}\big(\frac{c}{c_n}-1\big)$, and note that $\beta_n$ converges to zero with rate $\tfrac{1}{n^\alpha}$ due to Lemma \ref{cn-to-c}. Finally, using Proposition \ref{estimateGM} we conclude the proof of Theorem~\ref{GMntoGM}.
\end{proof}

\section{Convergence of obstacles}\label{convergence-obstacle}

Fix $x\in\bb{R}^2$ and $y\in\frac{1}{n}\bb{Z}^2$ and define
\begin{equation}\label{alpha-n}
	\alpha_n^x(y):=\frac{1}{n^2}G_{M,n}(x^{::}-y)-\int_{y^\square}G_M(x-z)dz.
\end{equation} 

\begin{lemma}\label{alpha-n-lemma}
	The function $\alpha_n^x$ satisfies 
	$$|\alpha_n^x(y)|\leq C\frac{f(x^{::}-y)}{n^{1+\alpha}},$$
	where $f$ is a positive function in $L_{loc}^1$ and $C$ is a positive constant which depends on $\alpha$.
\end{lemma}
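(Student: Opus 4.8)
The plan is to estimate $\alpha_n^x(y)$ by comparing, term by term, the rescaled discrete Green's function with the local average of the continuous Green's function, using Theorem \ref{GMntoGM} to control the discrete-to-continuous error and a Taylor expansion to control the error between $G_M$ at a point and its average over the small square $y^\square$. Write $z_0 = x^{::}-y$ and split
\begin{equation*}
\alpha_n^x(y) = \Big(\tfrac{1}{n^2}G_{M,n}(z_0) - \tfrac{1}{n^2}G_M(z_0)\Big) + \Big(\tfrac{1}{n^2}G_M(z_0) - \int_{y^\square}G_M(x-z)\,dz\Big) =: \mathrm{I} + \mathrm{II}.
\end{equation*}
For $\mathrm{I}$, apply \eqref{GMntoGMineq}: up to the term $\beta_n\log|z_0|$ (which, since $\beta_n = O(n^{-\alpha})$, contributes $O(n^{-2-\alpha}\log|z_0|)$, absorbed into the claimed bound), one gets $|\mathrm{I}| \le \frac{K}{n^{2+\alpha}}\big(1 + |z_0|^{\alpha-2} + |z_0|^{-2}\big)$. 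This is already better than the target order $n^{-1-\alpha}$ in $n$, with an $x^{::}-y$ dependence of the right integrable type near the origin.

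For $\mathrm{II}$, first rewrite it as $\int_{y^\square}\big(G_M(z_0) - G_M(x-z)\big)\,dz$ after noting $\int_{y^\square} dz = n^{-2}$ and $x - z$ ranges over a square of side $n^{-1}$ centered at $x - y = (x - x^{::}) + z_0$ with $|x - x^{::}| \le \tfrac{1}{\sqrt2\,n}$. The mean value theorem gives $|G_M(z_0) - G_M(x-z)| \le \frac{C}{n}\sup|\nabla G_M|$ on the relevant small neighborhood; using the estimates of Proposition \ref{estimateGM} (and Proposition \ref{estimateG1} through the scaling \eqref{changegreen}), $|\nabla G_M(w)| \lesssim |w|^{\alpha-3} + |w|^{-1}$ away from the origin, so that after integrating over $y^\square$ one finds $|\mathrm{II}| \le \frac{C}{n^2}\cdot\frac{1}{n}\big(|z_0|^{\alpha-3} + |z_0|^{-1}\big)$ whenever $|z_0|$ is bounded away from $0$, which is of order $n^{-3}$, hence again better than $n^{-1-\alpha}$. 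Near the origin, i.e. when $y$ is within $O(1/n)$ of $x^{::}$, one cannot differentiate; instead bound $\mathrm{II}$ crudely by $\tfrac{1}{n^2}|G_M(z_0)| + \int_{y^\square}|G_M(x-z)|\,dz$ and use the local integrability of $G_M$ (it behaves like $|w|^{\alpha-2}$, integrable in $\mathbb R^2$) to see this is $O(n^{-2}) \cdot (\text{integrable singularity})$, which fits into $C\,n^{-1-\alpha}f(x^{::}-y)$ for a suitable $f \in L^1_{loc}$ dominating $1 + |w|^{\alpha-2} + |w|^{\alpha-3}\mathbf{1}_{|w|\ge c/n}$ appropriately; more simply, one defines $f(w)$ to be the sum of the singular profiles appearing above, e.g. $f(w) = 1 + |w|^{\alpha-2} + |w|^{-2}\wedge n^{\ldots}$ — the cleanest choice is to let $f(w) = 1 + |w|^{-2+\alpha} + |w|^{-1}$ on $|w|\ge 1/n$ and handle $|w| < 1/n$ by the integrability argument, then check $f \in L^1_{loc}(\mathbb R^2)$.

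The main obstacle is the region where $x^{::} - y$ is comparable to $1/n$: there the pointwise gradient bound fails, the logarithmic term in Theorem \ref{GMntoGM} and the $|z_0|^{-2}$ term are large, and one must instead exploit averaging and the $L^1_{loc}$ control of $G_M$ and $\nabla G_M$ (Propositions \ref{estimateGM} and \ref{GM-derivada}) to show the contributions still sum, after the $1/n^2$ prefactor, to something of size $n^{-1-\alpha}$ times an integrable function of $x^{::}-y$. I expect the bookkeeping to consist of: (1) choosing $f$ so that it simultaneously dominates the far-field polynomial decay and the near-field integrable singularity; (2) verifying $f \in L^1_{loc}$; and (3) checking that the worst power of $n$ across all regions is indeed $n^{-1-\alpha}$ (it is, since $\mathrm{I}$ is $O(n^{-2-\alpha})$ and $\mathrm{II}$ is $O(n^{-3})$ away from the diagonal and $O(n^{-2})\times L^1_{loc}$ on it, all $\le O(n^{-1-\alpha})$ for $\alpha \in (1,2)$).
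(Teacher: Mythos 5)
Your proposal is correct and follows essentially the same route as the paper's proof: the same two-term decomposition, with Theorem \ref{GMntoGM} controlling the discrete-to-continuous error and a first-order Taylor bound plus the local integrability of $\nabla G_M$ (Proposition \ref{GM-derivada}) controlling the averaging error. Your fix for the non-integrable $|x^{::}-y|^{-2}$ term is exactly the paper's: since $x^{::},y\in\frac{1}{n}\bb{Z}^2$, one has $\frac{1}{n|x^{::}-y|^2}\le\frac{1}{|x^{::}-y|}$, which trades one power of $n$ (whence the exponent $1+\alpha$ rather than $2+\alpha$) for an integrable singularity.
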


\begin{proof}
	Write
	\begin{equation}\label{alpha-n-eq}
		\alpha_n^x(y)=\frac{1}{n^2}(G_{M,n}(x^{::}-y)-G_M(x^{::}-y))+\int_{y^\square}(G_M(x^{::}-y)-G_M(x-z))dz.
	\end{equation}
	For $z\in y^\square$, by Taylor's theorem, there exists a positive constant $C$ such that
	$$|G_M(x^{::}-y)-G_M(x-z)|\leq\frac{C}{n}|\nabla G_M(x^{::}-y)|,$$
	Integrating we obtain
	\begin{equation}\label{alpha-n-eq-1}
		\int_{y^\square}|G_M(x^{::}-y)-G_M(x-z)|dz\leq\frac{C|\nabla G_M(x^{::}-y)|}{n^3}.
	\end{equation}
	By Proposition \ref{GM-derivada}, the function $|\nabla G_M(x)|$ is in $L_{loc}^1(\bb{R}^2)$. 
	
	On the other hand by Theorem \ref{GMntoGM} there exists a constant $C>0$ depending only on $\alpha$, such that
	\begin{equation}\label{alpha-n-eq-2}
		|G_M(x^{::}-y)-G_{M,n}(x^{::}-y)|\leq\frac{C}{n^\alpha}\left(\tilde{f}(x^{::}-y)+\frac{1}{|x^{::}-y|^2}\right),
	\end{equation}
	where $\tilde{f}$ is a function in $L_{loc}^1(\bb{R}^2)$.
	
	The function $\frac{1}{|x|^2}$ is not locally integrable. To overcome this difficulty, since $x^{::}$ and $y$ are both in $\frac{1}{n}\bb{Z}^2$, we write
	$$\frac{1}{n|x^{::}-y|^2}\leq\frac{1}{|x^{::}-y|}.$$
	This last inequality, together with \eqref{alpha-n-eq}, \eqref{alpha-n-eq-1}, and \eqref{alpha-n-eq-2} concludes the proof of the lemma.
\end{proof}

\begin{proposition}\label{obstacle-convergence}
	Let $\gamma_n$ and $\gamma$ be the obstacles defined in \eqref{obstacledimn} and \eqref{obstaclecontinuous}. Then as $n$ goes to infinity
	$$\gamma_n^\square\rightarrow\gamma \text{~~uniformly on compact sets of~~}\bb{R}^2.$$
\end{proposition}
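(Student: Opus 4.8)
The plan is to prove the uniform convergence $\gamma_n^\square\to\gamma$ on compact sets by comparing the two defining formulas \eqref{obstacledimn} and \eqref{obstaclecontinuous} term by term. Fix a compact set $K\subset\bb R^2$. Writing $\gamma_n^\square(x)=\gamma_n(x^{::})$ and recalling that $\sigma_{M,n}^2\to\sigma_M^2$ (which follows from Lemma \ref{cn-to-c} together with the definitions of $\sigma_{M,n}^2$ and $\sigma_M^2$, and the convergence of the Riemann sums defining these variances), the parabolic terms $-|x^{::}|^2/\sigma_{M,n}^2$ converge uniformly on $K$ to $-|x|^2/\sigma_M^2$. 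So the whole problem reduces to showing that the convolution terms converge, i.e.
$$G_{M,n}*\rho_n(x^{::})=\frac{1}{n^2}\sum_{y\in\frac{1}{n}\bb Z^2}G_{M,n}(x^{::}-y)\rho_n(y)\longrightarrow \int_{\bb R^2}G_M(x-z)\rho(z)\,dz$$
uniformly for $x\in K$.

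The key step is to decompose this difference using the function $\alpha_n^x$ from \eqref{alpha-n}. Since $\rho$ has compact support, say $\supp\rho\subset B_\Lambda$, and $\rho\in C_c^2$ (in particular Lipschitz), I would write
$$G_{M,n}*\rho_n(x^{::})-\int_{\bb R^2}G_M(x-z)\rho(z)\,dz=\sum_{y}\alpha_n^x(y)\rho_n(y)+\sum_{y}\Big(\int_{y^\square}G_M(x-z)\,dz\Big)\big(\rho_n(y)-\rho(z)\big)\cdots$$
more carefully: the first piece is $\sum_y \alpha_n^x(y)\rho(y^{::})$ coming from replacing $\int_{y^\square}G_M(x-z)dz$ by $\frac1{n^2}G_{M,n}(x^{::}-y)$, and the remainder is $\sum_y\int_{y^\square}G_M(x-z)(\rho(y)-\rho(z))\,dz$, which is controlled by $\|\nabla\rho\|_\infty n^{-1}\int_{B_\Lambda'}|G_M(x-z)|\,dz$ using that $G_M$ is locally integrable (it has an integrable singularity $|x|^{-(2-\alpha)}$ and logarithmic growth, cf. Propositions \ref{estimateGM}, \ref{estimateG1}); this remainder goes to zero uniformly on $K$. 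For the first piece, Lemma \ref{alpha-n-lemma} gives $|\alpha_n^x(y)|\le C f(x^{::}-y)n^{-1-\alpha}$ with $f\in L^1_{loc}$, so
$$\Big|\sum_{y\in\supp\rho_n}\alpha_n^x(y)\rho(y)\Big|\le \frac{C\|\rho\|_\infty}{n^{1+\alpha}}\sum_{y\in B_\Lambda^{n,::}}f(x^{::}-y)\le \frac{C\|\rho\|_\infty}{n^{\alpha}}\Big(\int_{B_{\Lambda+|x|+1}}f(w)\,dw+o(1)\Big),$$
since $n^{-2}\sum_{y}f(x^{::}-y)$ is a Riemann sum for $\int f$ over a bounded region containing the relevant points; this is uniform in $x\in K$ and tends to $0$.

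The main obstacle I expect is handling the local integrability bookkeeping around the singularity at $x^{::}=y$: both $\nabla G_M$ (from \eqref{alpha-n-eq-1}) and the term $|x^{::}-y|^{-2}$ appearing in \eqref{alpha-n-eq-2} are delicate, and one must use — as in the proof of Lemma \ref{alpha-n-lemma} — that $x^{::}$ and $y$ both lie on the lattice $\frac1n\bb Z^2$, so $|x^{::}-y|\ge \frac1n$ whenever $x^{::}\ne y$, to convert a non-integrable kernel into an integrable one at the cost of a factor $n$. The Riemann-sum comparison $\frac1{n^2}\sum_y f(x^{::}-y)\to\int f$ must be made uniform in $x\in K$; since $f$ is a fixed $L^1_{loc}$ function and all relevant sums are over a fixed bounded set $B_{\Lambda+\mathrm{diam}(K)+1}$, this uniformity is routine (dominated convergence / equicontinuity of translates in $L^1$), but it should be stated. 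Once these estimates are assembled, letting $n\to\infty$ gives uniform convergence of the convolution term on $K$, and combined with the convergence of the parabolic term we conclude $\gamma_n^\square\to\gamma$ uniformly on $K$, which completes the proof since $K$ was arbitrary.
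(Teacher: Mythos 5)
Your proposal follows essentially the same route as the paper: reduce to the convolution term, decompose via $\alpha_n^x$ from \eqref{alpha-n}, and invoke Lemma \ref{alpha-n-lemma} together with a Riemann-sum comparison over the bounded support of $\rho$ (you are in fact slightly more careful than the paper in isolating the error $\sum_y\int_{y^\square}G_M(x-z)(\rho(y)-\rho(z))\,dz$, which the paper absorbs silently). The only slip is the rate in your last display: $n^{-(1+\alpha)}\sum_y f(x^{::}-y)\approx n^{-(\alpha-1)}\int f$, not $n^{-\alpha}$, exactly as in the paper's bound $C n^{-(\alpha-1)}$ — harmless since $\alpha>1$ still gives convergence to zero.
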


\begin{proof}
	We have $\sigma_{M,n}^2\rightarrow\sigma_M^2$. We only need to prove that $(G_{M,n}*\rho_n)^\square\rightarrow G_M*\rho$ uniformly on compact sets of $\bb{R}^2$.
	
	Since $\rho_n=\rho^{::}$ we write
	\begin{eqnarray*}
		(G_{M,n}*\rho)^\square(x)-G_M*\rho(x)&=&G_{M,n}*\rho(x^{::})-G_M*\rho(x)\\
		&=&\frac{1}{n^2}\!\!\!\sum_{y\in\frac{1}{n}\bb{Z}^2}\!\!\rho(y)G_{M,n}(x^{::}-y)\!-\!\int_{\bb{R}^2}\!\!\rho(y)G_M(x-y)dy\\
		&=&\sum_{y\in\frac{1}{n}\bb{Z}^2}\rho(y)\alpha_n^x(y),
	\end{eqnarray*}
	where $\alpha_n^x(y)$ is given by \eqref{alpha-n}.
	
	Recall that $\rho$ has compact support, let us say $E$. By Lemma \ref{alpha-n-lemma}, there exists a positive locally integrable function $f$ which satisfies
	$$|G_{M,n}*\rho(x^{::})-G_M*\rho(x)|\leq\frac{C}{n^{1+\alpha}}\sum_{y\in E^{::}}f(x^{::}-y)\leq\frac{C}{n^{\alpha-1}}\int_E f(x-y)dy,$$
	where the constant $C$ has changed in the last inequality. Since $f\in L_{loc}^1$ and $\alpha>1$, we conclude the proof.
\end{proof}

\section{Convergence of odometers}\label{convergence-odometers}

In this section we prove the convergence of majorants, and obtain, as a consequence, the convergence of odometers as stated in Theorem \ref{odoconvergence}. Before that we need a few lemmas.

Recall from \eqref{obst-n-previa}, \eqref{majorantdimn}, and \eqref{odometerdimn} the definition of the discrete obstacle function $\gamma_n$, the discrete majorant $s_n$, and the discrete odometer function $u_n$, and recall the definition of their respective continuous counterparts $\gamma$, $s$, and $u$ from \eqref{obstaclecontinuous}, \eqref{majorantcontinuous}, and \eqref{odometercontinuous}, respectively.

\begin{lemma}\label{suppofodom}
	There exists a ball $\Omega$ which contains the support of the odometer functions. That is,
	$$\supp(u)\subset \Omega \text{~~and~~}\bigcup_n\supp(u_n)\subset \Omega.$$
\end{lemma}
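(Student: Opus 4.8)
The plan is to establish a uniform (in $n$) bound on the support of the discrete odometers $u_n$ by comparing them with a suitable explicit superharmonic majorant, mimicking the argument for the continuous odometer in Lemma \ref{obstacle-cont-concave} and Proposition \ref{odom-supp-cont}. The key point is the odometer equation $u_n = s_n - \gamma_n$ together with the fact that $s_n$ is the least superharmonic majorant of $\gamma_n$ with respect to $L_{M,n}$: if I can exhibit a fixed ball $B_{R}$ (independent of $n$) outside of which $\gamma_n$ is $L_{M,n}$-superharmonic and $\gamma_n$ itself ``wins'', then on $B_R^c \cap \frac1n\bb Z^2$ the majorant $s_n$ must coincide with $\gamma_n$, forcing $u_n = 0$ there.

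First I would analyze $\gamma_n$ for large $|x|$. Writing $\gamma_n(x) = -|x|^2/\sigma_{M,n}^2 - G_{M,n}*\rho_n(x)$, I would use Theorem \ref{GMntoGM} to control the convolution term: for $|x|$ large compared to the (fixed, $n$-independent) support $E$ of $\rho$, the term $G_{M,n}*\rho_n(x)$ is, up to an error of order $n^{-\alpha}$ and the $\beta_n \log|x|$ correction, close to $G_M*\rho(x)$, which in turn behaves like $c'\log|x|$ for a constant $c' = -\tfrac{2}{\pi\sigma_M^2}\int\rho$ by Proposition \ref{estimateGM} (the total mass $\int\rho$ times the logarithmic leading term). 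Hence $\gamma_n(x) = -|x|^2/\sigma_{M,n}^2 + O(\log|x|)$ uniformly in $n$, so $\gamma_n \to -\infty$. More importantly, since $\sigma_{M,n}^2 \to \sigma_M^2$, the quadratic term dominates uniformly in $n$ outside a fixed ball, and a direct computation of $L_{M,n}$ applied to the relevant pieces — using $L_{M,n}|x|^2 = \sigma_{M,n}^2$ and the fact that $L_{M,n}(G_{M,n}*\rho_n) = -\rho_n = 0$ off $E$ — shows $L_{M,n}\gamma_n = \rho_n - 1 = -1 < 0$ away from $E$, i.e. $\gamma_n$ is superharmonic there. That alone is not quite enough to conclude $s_n = \gamma_n$; I need a global comparison.

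The cleaner route is to build, for each $n$, an explicit superharmonic function $w_n \geq \gamma_n$ that equals $\gamma_n$ outside a fixed ball. A natural candidate is $w_n(x) = \min\{\gamma_n(x) + \text{(quadratic/log correction)},\, \text{affine-type function}\}$ of the type used in the continuous case; alternatively, one shows that the ``$L_{M,n}$-concave modification'' of $\gamma_n$ — obtained by replacing $\gamma_n$ on the compact region near $E$ by its least $L_{M,n}$-superharmonic majorant there and gluing — is a legitimate competitor in the infimum defining $s_n$, because outside the fixed ball $\gamma_n$ itself is superharmonic. Since $u_n = s_n - \gamma_n \geq 0$ and $s_n \leq w_n$, we get $u_n \leq w_n - \gamma_n$, which vanishes outside the fixed ball. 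To make the gluing legitimate one needs a discrete maximum principle for $L_{M,n}$ (available as in Lemma \ref{Odmeqdim1}) and the observation that $\supp\eta$-type range considerations are harmless because the jump kernel $p_n$ has range $B_M$, which is macroscopically fixed.

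The main obstacle will be handling the logarithmic correction term $\beta_n\log|x|$ and the convolution tail uniformly in $n$: I must ensure that the error terms from Theorem \ref{GMntoGM} and from $G_M*\rho$ never overcome the quadratic $-|x|^2/\sigma_{M,n}^2$ at any fixed scale, and that the radius of the ball $\Omega$ can be chosen once and for all. Since $\beta_n \to 0$ at rate $n^{-\alpha}$ and $\sigma_{M,n}^2$ is bounded above and below uniformly in $n$ (by Lemma \ref{cn-to-c} and the definition \eqref{sigma1}), this is a matter of choosing $\Omega$ large enough that $\tfrac{R^2}{\sup_n\sigma_{M,n}^2}$ dominates $\sup_n(\beta_n + 1)\log R + \sup_n|G_{M,n}*\rho_n|$-type bounds on $\partial B_R$; the monotonicity of the quadratic then propagates outward. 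Finally, intersecting with the already-known $\supp(u)\subset\Omega$ from Proposition \ref{odom-supp-cont} (enlarging $\Omega$ if necessary) gives the combined statement.
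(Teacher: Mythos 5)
There is a genuine gap in the construction of the global competitor. You correctly identify that $L_{M,n}\gamma_n=\rho_n-1\le 0$ off the support of $\rho$ is not enough, and that one must exhibit a function $w_n\ge\gamma_n$ that is superharmonic \emph{everywhere} and equals $\gamma_n$ outside a fixed ball. But the construction you propose — replacing $\gamma_n$ on a compact region by its least superharmonic majorant there and ``gluing'' with $\gamma_n$ outside — fails precisely because $L_{M,n}$ is nonlocal. At a point $x$ outside the gluing region but within range $M$ of it, the glued function $w_n$ satisfies $w_n(x)=\gamma_n(x)$ while $w_n(x+y)\ge\gamma_n(x+y)$ for the neighbors, so
$$L_{M,n}w_n(x)\;\ge\;L_{M,n}\gamma_n(x)=-1,$$
which is a lower bound in the wrong direction: $L_{M,n}w_n(x)$ may well be positive, and the glued function need not be a legitimate competitor in \eqref{majorantdimn}. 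The alternative ``min with an affine-type function'' is too vague to assess, and its first piece is not globally superharmonic where $\rho_n>1$.

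The device the paper uses, and which your argument is missing, is the \emph{concave envelope}. One shows — via Theorem \ref{GMntoGM} together with the decay of the second derivatives of $G_M*\rho$ established in Lemma \ref{obstacle-cont-concave} — that the $\gamma_n$ are concave outside a fixed ball $\Omega$, uniformly in $n$; note this is a statement about second (symmetric) differences of $\gamma_n$, not about the function values, so your estimate that the quadratic term ``dominates'' $O(\log|x|)$ pointwise is not the relevant quantity. Concavity is exactly what makes the gluing problem disappear: since $p_n$ is symmetric, $L_{M,n}f(x)=\tfrac{n^\alpha}{2}\sum_y p_n(y)\bigl(f(x+y)+f(x-y)-2f(x)\bigr)$, so \emph{any} concave function is automatically $L_{M,n}$-superharmonic at every point, with no boundary-layer issue. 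The concave envelope $\Gamma_n$ of $\gamma_n$ is therefore a global competitor, it coincides with $\gamma_n$ outside $\Omega$ (because $\gamma_n$ is concave there and decays quadratically), and $s_n\le\Gamma_n=\gamma_n$ on $\Omega^c$ forces $u_n=s_n-\gamma_n=0$ there. Your overall strategy (odometer equation plus a global superharmonic competitor) is the right one, but without the concavity input the proof does not close.
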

\begin{proof}
	By Lemma \ref{obstacle-cont-concave}, there exists a ball $\Omega$ such that the obstacle function $\gamma$ is concave outside $\Omega$. In Proposition \ref{odom-supp-cont} we proved that this ball $\Omega$ also contains the support of the odometer function $u$.
	
	We can write the discrete obstacle function $\gamma_n$ as
	$$\gamma_n(x)=\frac{|x|^2}{\sigma_{M,n}^2}-(G_{M,n}-G_M^{::})*\rho_n(x)-G_M^{::}*\rho_n(x).$$
	We use Theorem \ref{GMntoGM} and repeat the argument used in the proof of Lemma \ref{obstacle-cont-concave} to conclude that there exists a ball $\Omega$ (bigger if necessary) such that the obstacles functions $\gamma_n$ are concave outside $\Omega$.
	
	Then, the concave envelope $\Gamma_n$ of the obstacle $\gamma_n$, defined as the least concave function lying above $\gamma_n$, satisfies that $\Gamma_n(x)=\gamma_n(x)$ for all $x$ outside $\Omega$ and for all $n$. The same argument used in the proof of Proposition \ref{odom-supp-cont} concludes our proof. 
\end{proof}

\begin{lemma}\label{convergencemajorantsupp}
	Let $\gamma_n$ and $s_n$ be the obstacle function and the majorant given by \eqref{obst-n-previa} and \eqref{majorantdimn}. If $\Omega\subset\bb{R}^2$ is a bounded set like the one in Lemma \ref{suppofodom}, then
	$$s_n(x)=\inf{\{f(x);L_{M,n}f\leq 0\text{~on~} \Omega^{::} \text{~and~} f\geq\gamma_n\}}.$$
\end{lemma}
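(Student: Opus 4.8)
The claim is that the discrete majorant $s_n$, originally defined in \eqref{majorantdimn} by an infimum over functions superharmonic on all of $\frac{1}{n}\bb{Z}^2$, can equally be obtained by taking the infimum over functions that are superharmonic only on $\Omega^{::}$. The strategy mirrors the continuous argument already carried out in Section \ref{CONT} (Lemmas \ref{sU-superh}, \ref{sU-cont} and Proposition \ref{continuityofS}), but is substantially simpler because on the discrete lattice there are no continuity or approximation-of-identity issues, and the maximum principle for $L_{M,n}$ is available directly. Denote by $s_n^\Omega(x)$ the right-hand side of the asserted identity. Since every function superharmonic on all of $\frac{1}{n}\bb{Z}^2$ is in particular superharmonic on $\Omega^{::}$, the infimum defining $s_n^\Omega$ is over a larger class, so $s_n^\Omega \le s_n$ pointwise; the whole content of the lemma is the reverse inequality $s_n^\Omega \ge s_n$.

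First I would record the key structural fact supplied by Lemma \ref{suppofodom}: by that lemma (via the concave envelope argument in its proof) the obstacle $\gamma_n$ is concave outside $\Omega$, hence $L_{M,n}\gamma_n \le 0$ on $(\Omega^{::})^c$ — here one uses that for a concave function $\phi$ one has $L_{M,n}\phi(x) = n^\alpha \sum_y p_n(y)(\phi(x+y)-\phi(x)) \le 0$, being an average of increments of a concave function against a symmetric probability, provided the ball $B_M$ of interaction around $x$ is still in the region of concavity; enlarging $\Omega$ by $B_M$ if necessary (exactly as in \eqref{U-assumption}) handles the boundary layer. Consequently $\gamma_n$ is itself an admissible competitor in the infimum defining $s_n^\Omega$ on the complement of $\Omega^{::}$, and combining with the fact that $u_n = s_n - \gamma_n$ is supported in $\Omega$ (Lemma \ref{suppofodom}), we get that $s_n$ and $s_n^\Omega$ both coincide with $\gamma_n$ on $(\Omega^{::})^c$.

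Next I would take an arbitrary competitor $f$ for $s_n^\Omega$, i.e. $f \ge \gamma_n$ everywhere and $L_{M,n} f \le 0$ on $\Omega^{::}$, and show $f \ge s_n$; taking the infimum over such $f$ then yields $s_n^\Omega \ge s_n$ and finishes the proof. The idea is to produce from $f$ a genuinely globally superharmonic function lying above $\gamma_n$ without increasing its values on $\Omega^{::}$. Consider $g = \min(f, s_n^\Omega + C)$ for a suitable constant, or more robustly the truncation that glues $f$ on a neighbourhood of $\Omega$ to $\gamma_n$ outside: since $f \ge \gamma_n$ and, as established, $s_n^\Omega = \gamma_n$ off $\Omega^{::}$ while $f$ is superharmonic on $\Omega^{::}$ and $\gamma_n$ is superharmonic off $\Omega^{::}$ (up to the $B_M$ collar), the function $\tilde f := \min(f,\ \gamma_n + \text{(harmonic-type correction)})$ — concretely, one checks that $g := f \wedge h$ where $h$ agrees with $\gamma_n$ off $\Omega^{::}$ and is large on $\Omega^{::}$ — is superharmonic on all of $\frac{1}{n}\bb{Z}^2$: superharmonicity of a minimum of two superharmonic functions is immediate from $L_{M,n}(\phi_1 \wedge \phi_2)(x) \le L_{M,n}\phi_i(x)$ at a point where $\phi_i$ achieves the minimum. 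Then $g \ge \gamma_n$, $g$ is globally superharmonic, hence $g \ge s_n$; but $g = f$ on $\Omega^{::}$ and $g = \gamma_n = s_n$ off $\Omega^{::}$, so $f \ge s_n$ on $\Omega^{::}$ and $f = \gamma_n \le$ (trivially) on the complement, giving $f \ge s_n$ everywhere. Finally apply the discrete maximum principle for $L_{M,n}$ on $\Omega^{::}$ to rule out the degenerate possibility that the gluing destroys superharmonicity in the $B_M$-collar, exactly as in the proof of Lemma \ref{Odmeqdim1}.

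The main obstacle I anticipate is the bookkeeping around the interaction range: because $L_{M,n}$ is a nonlocal operator with range $B_M$, "superharmonic on $\Omega^{::}$" involves values of $f$ up to distance $M$ outside $\Omega$, so one must be careful that the set $\Omega$ in the statement has already been fattened (as Lemma \ref{suppofodom} permits) so that $\gamma_n$ is concave — hence $L_{M,n}$-superharmonic — on a full $M$-neighbourhood of $\Omega^{::}$, and that the min-gluing above is performed on a correspondingly enlarged region. Once the regions are lined up, the superharmonicity of minima and the discrete maximum principle make the rest routine.
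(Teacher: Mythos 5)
The trivial inequality $s_n^\Omega\le s_n$ and the identification $s_n=\gamma_n$ off $\Omega^{::}$ are fine, but the core of your argument --- manufacturing a \emph{globally} superharmonic competitor $g=f\wedge h$ out of an $f$ that is only superharmonic on $\Omega^{::}$ --- has a genuine gap in the $B_M$-collar, and your closing appeal to the maximum principle does not repair it. The fact $L_{M,n}(\phi_1\wedge\phi_2)(x)\le L_{M,n}\phi_i(x)$ is only useful when the function $\phi_i$ achieving the minimum at $x$ is itself superharmonic at $x$. Take $x\notin\Omega^{::}$ within distance $M$ of $\Omega^{::}$: there the minimum is achieved by $h$ (since $f\ge\gamma_n=h$ off $\Omega^{::}$), but $h$ is emphatically \emph{not} superharmonic at $x$, because $L_{M,n}h(x)=n^\alpha\sum_y p_n(y)(h(x+y)-\gamma_n(x))$ contains terms with $x+y\in\Omega^{::}$ where $h$ is ``large''; bounding $(f\wedge h)(x+y)$ by $f(x+y)$ instead gives an inequality in the wrong direction. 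So the glued $g$ can fail to be superharmonic precisely in the collar, and the construction collapses. No fattening of $\Omega$ helps: the problem is created by $h$ jumping upward on $\Omega^{::}$, not by where $\gamma_n$ is concave.

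Two ways to close the gap. (a) Keep your ``minimum of superharmonic functions'' idea but take the minimum with $s_n$ itself: $g:=\min(f,s_n)$ is globally superharmonic, because at any $x$ where $f$ realizes the minimum you must have $x\in\Omega^{::}$ (off $\Omega^{::}$, $f\ge\gamma_n=s_n$), so $L_{M,n}f(x)\le0$ applies, while at any $x$ where $s_n$ realizes the minimum you use the global superharmonicity of $s_n$; since $g\ge\gamma_n$, the definition of $s_n$ forces $g\ge s_n$, hence $f\ge s_n$. (b) The paper's route is different and avoids gluing altogether: it uses that $L_{M,n}s_n=\nu_n-1=0$ on $D_n=\{\nu_n=1\}\subset\Omega^{::}$ and that $s_n=\gamma_n$ off $D_n$, so $f-s_n$ is superharmonic on $D_n$ and by the discrete minimum principle attains its minimum outside $D_n$, where it equals $f-\gamma_n\ge0$. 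Either repair is short, but as written your proof does not go through.
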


\begin{proof}
	We observe first that $L_{M,n}s_n=0$ on the set $D_n=\{\nu_n=1\}$, since
	\begin{equation*}
		L_{M,n}s_n=L_{M,n}(u_n+\gamma_n)=\nu_n-\sigma_n+\sigma_n-1=\nu_n-1.
	\end{equation*}
	Let us call 
	$$s_n^\Omega(x)=\inf{\{f(x):L_{M,n}f\leq 0\text{~~on~~} \Omega^{::} \text{~~and~~} f\geq\gamma_n\}}.$$
	We want to prove that $s_n^\Omega=s_n$. Since the infimum is taken over a strictly larger set, the inequality $s_n^\Omega\leq s_n$ is trivial.
	
	For the converse inequality consider a function $f\geq\gamma_n$ such that $L_{M,n}f\leq0$ on $\Omega^{::}$. Since $L_{M,n}s_n=0$ on $D_n$, the difference $f-s_n$ satisfies $L_{M,n}(f-s_n)\leq0$ on $D_n$, then by the maximum principle $f-s_n$ attains its minimum at a point $x$ outside $D_n$, where $s_n(x)=\gamma_n(x)$. Since $f\geq\gamma_n$ we conclude $f\geq s_n$ on $\Omega^{::}$ and, hence, everywhere.
\end{proof}

Recall that Lemma \ref{inversion-smooth-lemma} states
\begin{equation}\label{inversion-smooth}
	L_M(G_M*\phi)=-\phi\text{~~for a function~}\phi\in C_c^\infty.
\end{equation}
The next lemma studies the equation above removing the smoothness assumption.

\begin{lemma}\label{inversion}
	Let $f$ be a bounded, measurable function with compact support. Then $L_M(G_M*f)=-f$ in the sense of distributions. Moreover, if $f$ is positive in an open set $U$, then $G_M*f$ is continuous and superharmonic in $U$.
\end{lemma}
\begin{proof}
	For a function $\phi\in C_c^\infty$, it holds that $L_M(G_M*\phi)=G_M*(L_M\phi)$. We have 
	$$\langle L_M(G_M*f),\phi\rangle=\langle f,L_M(G_M*\phi)\rangle=\langle f,-\phi\rangle=\langle -f,\phi\rangle.$$
	Now, by Proposition \ref{GM-derivada}, and since $f$ is bounded with compact support and the first partial derivatives of $G_M$ are locally integrable, it is easy to see that $G_M*f$ is continuous. Now, fix a positive function $\phi\in C_c^\infty$ supported in $U$. Then, we have
	$$\langle L_M(G_M*f),\phi\rangle=\langle -f,\phi\rangle\leq0,$$
	since $f$ and $\phi$ are both positive in $U$.
\end{proof}

The next proposition states the convergence of the majorants. 

\begin{proposition}\label{majorant-convergence}
	Let $s_n$ and $s$ be the majorants defined in \eqref{majorantdimn} and \eqref{majorantcontinuous}, respectively. Then
	$$s_n^\square\rightarrow s$$
	uniformly on compact sets of $\bb{R}^2$.
\end{proposition}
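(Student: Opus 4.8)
The plan is to establish the two inequalities $\limsup_n s_n^\square \le s$ and $\liminf_n s_n^\square \ge s$ uniformly on compact sets, mimicking the classical strategy of Levine--Peres but using the mean value machinery of Section~\ref{CONT} and the quantitative Green's function bound of Theorem~\ref{GMntoGM}. Throughout I fix a ball $\Omega$ as in Lemma~\ref{suppofodom} containing $\supp(u)$ and all $\supp(u_n)$, and I fix a bounded open set $U$ with $\Omega + B_{M+1} \subset U$, so that by Proposition~\ref{continuityofS} we have $s = s^U$, and by Lemma~\ref{convergencemajorantsupp} the discrete majorant $s_n$ is obtained by the analogous infimum over functions that are merely superharmonic on $\Omega^{::}$.

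\emph{Upper bound.} First I would produce, for each $\varepsilon>0$, a fixed continuous superharmonic (with respect to $L_M$) function $f$ with $f \ge \gamma$ and $f \le s + \varepsilon$ on a fixed compact neighborhood of $U$; such $f$ exists by definition of $s$ together with its continuity (Proposition~\ref{continuityofS}) and a standard regularization. One can take $f$ of the form $G_M * h$ plus the quadratic term, with $h$ a suitable nonnegative bounded function of compact support, using Lemma~\ref{inver-util} to guarantee continuity and superharmonicity; alternatively mollify a genuine competitor. Then I restrict $f$ to $\frac1n\bb Z^2$ and correct it: set $f_n = f^{::} + \delta_n$ where $\delta_n \to 0$ is chosen so that $f_n \ge \gamma_n$ everywhere --- this uses Proposition~\ref{obstacle-convergence} ($\gamma_n^\square \to \gamma$ uniformly on compacts, and $\gamma_n = \gamma_n^\square$ agrees with the concave envelope outside $\Omega$) --- and so that $L_{M,n} f_n \le 0$ on $\Omega^{::}$, which follows from Lemma~\ref{LntoL} (discrete Laplacian converges to $L_M f \le 0$, with a strictly negative safety margin built into the choice of $f$, e.g. by first taking $f$ strictly superharmonic on $\overline{\Omega}$). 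By Lemma~\ref{convergencemajorantsupp}, $f_n$ is an admissible competitor, so $s_n \le f_n$, hence $s_n^\square \le f^{n,\square} + \delta_n \le s + 2\varepsilon$ on the relevant compact set for $n$ large.

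\emph{Lower bound.} This is the subtler direction. The idea is that $s_n$, being $L_{M,n}$-superharmonic, satisfies a \emph{discrete} mean value inequality against a rescaled version of the approximate identity $\eta_\lambda$ from Proposition~\ref{propcont-2}; passing to the limit, any subsequential uniform limit $s_\infty$ of $s_n^\square$ must satisfy $s_\infty \ge \eta_\lambda * s_\infty$ for all small $\lambda$, hence be continuous and $L_M$-superharmonic by Proposition~\ref{propcont-3}. Since also $s_\infty \ge \gamma$ (from $s_n \ge \gamma_n$ and Proposition~\ref{obstacle-convergence}), we get $s_\infty \ge s$ by definition of the least superharmonic majorant. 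To run this I first need uniform equicontinuity / uniform boundedness of $\{s_n^\square\}$ on compacts so that Arzel\`a--Ascoli applies: boundedness comes from $\gamma_n^\square \to \gamma$ and $s_n = u_n + \gamma_n$ with $u_n \ge 0$ supported in $\Omega$, together with an upper barrier as in the upper bound step; equicontinuity comes from the discrete mean value inequality plus uniform integrability of the discrete analogue of $L_{M,n}\Gamma_\lambda$, transferred from Proposition~\ref{propcont-2} via Theorem~\ref{GMntoGM}. The main technical obstacle will be precisely this transfer --- making rigorous that $s_n$ satisfies a discrete mean value inequality with kernel close to $\eta_\lambda$, and that the error terms (controlled by the $n^{-\alpha}(1 + |x|^{\alpha-2} + |x|^{-2})$ bound of Theorem~\ref{GMntoGM} and by Lemma~\ref{LntoL}) vanish in the limit uniformly on compacts. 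Once the mean value inequality is in place for the limit, the rest is the soft argument above.

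\emph{Conclusion.} Combining the two bounds, every subsequential limit equals $s$, and since $\{s_n^\square\}$ is precompact in the topology of uniform convergence on compacts, the whole sequence converges: $s_n^\square \to s$ uniformly on compact sets of $\bb R^2$. I expect the lower bound, and within it the uniform control of the discrete mean value inequality via Theorem~\ref{GMntoGM}, to be where essentially all the work lies; the upper bound is a routine competitor construction.
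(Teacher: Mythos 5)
Your upper bound is essentially the paper's argument: mollify $s$ to get a smooth superharmonic $\tilde s$, push it to the lattice, use Lemma \ref{LntoL}, and take a max with $\gamma_n$ to build an admissible competitor for $s_n$. One detail to fix: adding a constant $\delta_n$ to $f^{::}$ does not change $L_{M,n}f_n$, and a mollification of $s$ is not strictly superharmonic (on the non-contact set $L_M s=0$), so you have no margin to absorb the $O(n^{-(2-\alpha)})$ error from Lemma \ref{LntoL}. The paper instead subtracts $\tfrac{C_1}{n^{2-\alpha}}|x|^2$, whose discrete Laplacian is exactly $-\tfrac{C_1\sigma_{M,n}^2}{n^{2-\alpha}}$ and dominates that error; you need a correction of this kind, not a constant shift.

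For the lower bound you propose a genuinely different route (Arzel\`a--Ascoli plus a discrete mean value inequality transferred to subsequential limits), and you correctly identify that the transfer is where all the work lies --- but that is precisely the part you have not supplied, and it is not clear it is easier than what it replaces. In particular, equicontinuity of $\{s_n^\square\}$ does not come for free from discrete superharmonicity; you would have to build discrete analogues of the kernels $\Gamma_\lambda$ and prove uniform regularity estimates from scratch. The paper sidesteps all of this with one device: set $\phi_n=-L_{M,n}(s_n\omega)$ for a cutoff $\omega$, prove $\phi_n$ is uniformly bounded (Lemma \ref{phibounded}, using Theorem \ref{GMntoGM} and the odometer equation, which gives $-L_{M,n}s_n=1-\nu_n\in[0,1]$ on the relevant set), and write $s_n=G_{M,n}*\phi_n$ there. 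Lemma \ref{phi-aprox-sn} then shows $s_n^\square$ is uniformly close to $G_M*(\phi_n)^\square$, which by Lemma \ref{inver-util} is a \emph{continuous, $L_M$-superharmonic} function lying above $\gamma-2\epsilon$; maxing with $\gamma$ produces, for each large $n$, a competitor for $s$ within $O(\epsilon)$ of $s_n^\square$, giving $s_n^\square\ge s-3\epsilon$ directly --- no compactness, no subsequential limits, no discrete mean value inequality. If you want to keep your compactness route, note that the same Green's function representation is also the cheapest way to get the equicontinuity you need, since $\nabla G_M$ is locally integrable (Proposition \ref{GM-derivada}) and the $\phi_n$ are uniformly bounded with supports in a fixed compact set. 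As written, your lower bound has an acknowledged but unfilled gap at its technical core.
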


By Lemma \ref{suppofodom} there exists a ball $\Omega\subset\bb{R}^2$ containing $\supp(u)$ and $\supp(u_n)$ for all $n$. Outside $\Omega$, we have
$$s_n^\square=\gamma_n^\square\rightarrow\gamma=s$$
uniformly on compact sets by Proposition \ref{obstacle-convergence}. We only need to prove convergence in $\Omega$.

Let us consider four compact sets $K_1$, $K_2$, $K_3$, and $K_4$, such that $\supp(\rho)\subset K_1$, $\Omega+B_{M+1}\subset K_1$, and 
\begin{equation}\label{K-assumption}
	K_j+B_{M+1}\subset K_{j+1}\text{~for~}j=1,2,3.
\end{equation}
By our assumption about $K_1$, Lemma \ref{convergencemajorantsupp}, and Proposition \ref{continuityofS}, we have that the discrete and continuous majorants can be written as
\begin{eqnarray}
	s_n&=&\inf\{f(x); L_{M,n}f\leq 0 \text{~on~} K_1^{::} \text{~and~} f\geq\gamma_n\},\label{sn-K1}\\
	s&=&\inf\{f(x);f\text{~is continuous}, L_Mf\leq 0 \text{~on~} K_1 \text{~and~} f\geq\gamma\}\label{s-K1}.
\end{eqnarray} 
Consider the function $\omega(x)$ such that $0\leq\omega\leq1$, $\omega(x)=1$ for $x\in K_3$, and $\supp{\omega}\subset K_3+B_1$.

Define the following function on $\frac{1}{n}\bb{Z}^2$:
\begin{equation}\label{phidefinition}
	\phi_n=-L_{M,n}(s_n\omega).
\end{equation}

\begin{lemma}\label{phibounded}
	The sequence of functions $(\phi_n)_n$ is uniformly bounded. 
\end{lemma}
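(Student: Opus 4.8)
The plan is to show that $\phi_n = -L_{M,n}(s_n\omega)$ is bounded uniformly in $n$ by splitting the lattice into the region where $\omega\equiv 1$ and the boundary region where $\omega$ transitions, and treating each with a separate mechanism. The key structural fact, already recorded in the proof of Lemma~\ref{convergencemajorantsupp}, is that $L_{M,n}s_n = \nu_n - 1$, so $|L_{M,n}s_n|\leq 1$ pointwise. This controls the ``bulk'' contribution; the only real danger is the interaction between $s_n$ and the cutoff $\omega$ at intermediate range, and the (a priori unbounded) growth of $s_n$ itself far away.

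First I would write, for $x$ in the interior region $K_3$ (where $\omega\equiv 1$ on a neighborhood of size $M$ around $x$, by \eqref{K-assumption}), that $L_{M,n}(s_n\omega)(x) = L_{M,n}s_n(x)$, because the operator $L_{M,n}$ only sees values of $s_n\omega$ within $B_M$ of $x$, where $\omega = 1$; hence $|\phi_n(x)|\leq 1$ there. Next, for $x$ outside $K_3+B_1$ we have $\omega = 0$ on a neighborhood, so $s_n\omega$ vanishes near $x$ and $L_{M,n}(s_n\omega)(x) = n^\alpha\sum_{y} p_n(y)(s_n\omega)(x+y)$, which is a nonnegative combination of values of $s_n\omega$ at points within $B_M$; since $s_n\omega$ is supported in $K_3+B_1\subset K_4$ (a fixed compact set), I need a uniform bound on $s_n$ on $K_4$. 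For $x$ in the transition annulus $\left(K_3+B_1\right)\setminus K_3$ the same expansion of $L_{M,n}(s_n\omega)$ again only involves values of $s_n$ in $K_4$, so both remaining cases reduce to: $s_n$ is uniformly bounded on the fixed compact set $K_4$.

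To get that uniform bound on $s_n$ over $K_4$, I would use the obstacle-problem structure. On one side, $s_n \geq \gamma_n$, and by Proposition~\ref{obstacle-convergence} the $\gamma_n^\square$ converge uniformly on compacts, hence $\gamma_n \geq -C$ on $K_4$ uniformly. On the other side, $s_n$ is the least superharmonic majorant, so $s_n$ is bounded above by \emph{any} convenient superharmonic function lying above $\gamma_n$; a fixed paraboloid (a large constant times $-|x|^2/\sigma_{M,n}^2$ plus a constant, which is superharmonic since $L_{M,n}|x|^2 = \sigma_{M,n}^2 > 0$ gives $L_{M,n}(-c|x|^2) = -c\sigma_{M,n}^2 \le 0$), chosen large enough to dominate the uniformly convergent obstacles $\gamma_n$, does the job and gives $s_n \leq C$ on $K_4$ uniformly in $n$. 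Combining the lower bound $s_n\geq\gamma_n\geq -C$ with this upper bound yields $\|s_n\|_{L^\infty(K_4)}\leq C$ uniformly, which feeds back into the two boundary cases above via $|\phi_n(x)| = n^\alpha\sum_y p_n(y)|(s_n\omega)(x+y)| \leq \big(n^\alpha\sum_y p_n(y)\big)\,\|s_n\|_{L^\infty(K_4)}$; the prefactor $n^\alpha\sum_y p_n(y)$ is itself bounded (it equals $n^\alpha$ times a probability mass, but one must note it is really $L_{M,n}$ applied to the constant-ish situation — more precisely $n^\alpha \sum_y p_n(y) \to$ a finite constant by the analytical estimates of Section~\ref{analytical-estimates}, or one can absorb it using $|L_{M,n}\mathbf{1}| = 0$ plus the support truncation).

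The main obstacle I anticipate is exactly this last point: controlling $n^\alpha\sum_{y\in B_M^{::}\cup\partial B_M^{::}} p_n(y)$, i.e.\ checking that the ``total exit rate'' of the rescaled operator $L_{M,n}$ stays bounded. This is where the truncation at radius $M$ and the normalization $c_n\to c$ (Lemma~\ref{cn-to-c}) must be used; without truncation the sum would blow up, but here $\sum_y p_n(y)$ over $|y|\le$ macroscopic is a genuine probability (equals $1$), so $n^\alpha\sum_y p_n(y) = n^\alpha$ is \emph{not} bounded — the correct statement is that $\phi_n(x)$ at points away from $K_3$ only involves $s_n$-values at lattice points \emph{outside} a neighborhood of $x$, so one must instead write $\phi_n(x) = -L_{M,n}(s_n\omega)(x)$ and use that $s_n\omega \equiv s_n$ on the support-relevant scale only when $\omega$ is locally constant, and otherwise expand $L_{M,n}(s_n\omega) = \omega L_{M,n}s_n + s_n L_{M,n}\omega + (\text{bilinear carr\'e-du-champ term})$; the bilinear term is $n^\alpha\sum_y p_n(y)(s_n(x+y)-s_n(x))(\omega(x+y)-\omega(x))$, which is bounded because $|\omega(x+y)-\omega(x)|\leq C|y|$ (so it kills the singularity, turning $n^\alpha p_n(y)|y|$ into something summable), $L_{M,n}\omega$ is uniformly bounded (Lemma~\ref{LntoL} applied to the smooth $\omega$), and $|s_n|\le C$ on $K_4$ with $|L_{M,n}s_n|\le 1$. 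Assembling these three bounded pieces gives $\|\phi_n\|_\infty \leq C$ uniformly, completing the proof.
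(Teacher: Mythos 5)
Your reduction to the bulk region is fine and matches the paper: where $\omega\equiv 1$ on an $M$-neighborhood of $x$ you get $\phi_n=-L_{M,n}s_n=1-\nu_n\in[0,1]$. You also correctly diagnose that the naive bound $|\phi_n|\leq n^\alpha\sum_yp_n(y)\,\|s_n\omega\|_\infty$ fails. But your repair via the product rule has a genuine gap in the carr\'e-du-champ term. That term is
$n^\alpha\sum_yp_n(y)\,(s_n(x+y)-s_n(x))(\omega(x+y)-\omega(x))$, and bounding the first factor by $2\|s_n\|_{L^\infty(K_4)}$ and the second by $C|y|$ leaves you with $n^\alpha\sum_yp_n(y)|y|$. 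This quantity is \emph{not} bounded: the nearest-neighbor part contributes $4c_nk_nn^{\alpha-1}$, and the long-range part contributes $\tfrac{c_n}{n^2}\sum_{r/n\lesssim|y|\leq M}F_n(y)|y|^{-(1+\alpha)}\asymp\int_{r/n}^{M}s^{-\alpha}ds\asymp n^{\alpha-1}$; both diverge precisely because $\alpha>1$ (in $d=2$ the kernel $|y|^{-2-\alpha}$ needs a factor $|y|^{1+\epsilon}$ with $1+\epsilon>\alpha$ near the origin, and Lipschitz continuity of $\omega$ alone only supplies $|y|^1$). To close this you would need a uniform (in $n$) Lipschitz, or at least H\"older-$\beta$ with $\beta>\alpha-1$, estimate on $s_n$, which you do not establish and which is not available off the shelf. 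Your barrier argument for $\|s_n\|_{L^\infty(K_4)}\leq C$ is also only sketched (it needs a global, $n$-uniform domination of $\gamma_n$ by the paraboloid), but that is a secondary issue.

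The paper avoids needing any modulus of continuity for $s_n$ by a different mechanism: on the transition annulus $E=K_4\setminus K_2$ one has $s_n=\gamma_n$ (the odometer vanishes there, since $\Omega+B_{M+1}\subset K_1$), so $\phi_n=-L_{M,n}(\gamma_n\omega)$ there; this is split as $L_{M,n}((\gamma_n-\gamma)\omega)+L_{M,n}(\gamma\omega)$, the second term being controlled by Lemma \ref{LntoL} since $\gamma\omega$ is $C^2$, and the first by the quantitative estimate $\|\gamma_n-\gamma\|_{L^\infty}=O(n^{-\alpha})$ on the relevant set (this is where the full strength of Theorem \ref{GMntoGM} enters), which exactly cancels the $O(n^\alpha)$ operator norm of $L_{M,n}$ on $L^\infty$. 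If you want to salvage your route, you would have to import this same $n^{-\alpha}$ rate to get regularity of $\gamma_n$ (hence of $s_n$ on $E$), at which point you are essentially reproducing the paper's argument.
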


\begin{proof}
	Since $\supp(\omega)\subset K_3+B_1$ we have $\supp(\phi_n)\subset (K_3+B_{M+1})\subset K_4$.
	
	If $x\in K_2$, $\phi_n$ coincides with $-L_{M,n}s_n$ which is uniformly bounded by $1$, as can be easily seen as a consequence of the odometer equation.
	
	Consider then $x\in K_4\setminus K_2=:E$. In the set $E$ we can replace $s_n$ by $\gamma_n$, then in \eqref{phidefinition}, we have
	\begin{equation}\label{gamma-por-s}
		\phi_n(x)=-L_{M,n}(\gamma_n\omega)(x) \text{~~~for~}x\in E^{::}.
	\end{equation}
	We write 
	\begin{equation}\label{phibound-eq-1}
		L_{M,n}(\gamma_n\omega)(x)=L_{M,n}((\gamma_n-\gamma)\omega)(x)+L_{M,n}(\gamma\omega)(x).
	\end{equation}
	By Lemma \ref{LntoL}, the last term on the right-hand side of \eqref{phibound-eq-1} converges to $L_M(\gamma\omega)(x)$ uniformly on $E$, once we have that $\gamma\omega$ is $C^2$. We conclude that $L_{M,n}(\gamma\omega)(x)$ is uniformly bounded in $E$.\\
	We only need to check that the first term on the right-hand side of \eqref{phibound-eq-1} is also uniformly bounded on $E$.
	
	Note that
	$$L_{M,n}\left(\left(\tfrac{|\cdot|^2}{\sigma_{M,n}^2}-\tfrac{|\cdot|^2}{\sigma_M^2}\right)\omega\right)(x)=\left(\tfrac{1}{\sigma_{M,n}^2}-\tfrac{1}{\sigma_M^2}\right)L_{M,n}(|\cdot|^2\omega)(x)$$
	is uniformly bounded on $E$, also as a consequence of Lemma \ref{LntoL}. 
	
	Then, it suffices to prove that 
	$$L_{M,n}((G_M*\rho-G_{M,n}*\rho_n)\omega)$$
	is uniformly bounded on $E$.
	
	Recall from \eqref{Ln=Delta+Lhat} that we can write
	$$L_{M,n}=\frac{4c_nk_n\Delta_n}{n^{2-\alpha}}+c_n\widehat{L}_{M,n},$$
	where 
	\begin{eqnarray*}
		\Delta_nf(x)&=&\frac{n^2}{4}\sum_{y\in\frac{1}{n}\bb{Z}^2, |y|=\frac{1}{n}}f(x+y)-f(x)\\
		\text{and~~}\widehat{L}_{M,n}f(x)&=&\frac{1}{n^2}\sum_{y\in B_M^{::}\cup\partial B_M^{::}}\frac{F_n(y)(f(x+y)-f(x))}{|y|^{2+\alpha}}
	\end{eqnarray*}
	for a function $f$ defined on $\frac{1}{n}\bb{Z}^2$.
	
	Let us see that $\widehat{L}_{M,n}((G_M*\rho-G_{M,n}*\rho_n)\omega)$ is uniformly bounded on $E$. Recall that for $x\in\frac{1}{n}\bb{Z}^2$, we defined $\rho_n(x)=\rho^{::}(x)$. 
	
	Denote $E_M:=E+B_{M+1}$. For $x\in E_M^{::}$ we have
	
	\begin{eqnarray*}
		G_M*\rho(x)-G_{M,n}*\rho(x)&=&\sum_{y\in\frac{1}{n}\bb{Z}^2}\int_{y^\square}(G_M(z)\rho(x-z)-G_{M,n}(y)\rho(x-y))dz\\
		&=&I_1(x)+I_2(x),
	\end{eqnarray*}
	where
	\begin{eqnarray*}
		I_1(x)&:=&\sum_{y\in\frac{1}{n}\bb{Z}^2}\int_{y^\square}(G_M(z)\rho(x-z)-G_M(y)\rho(x-y))dz,\\
		I_2(x)&:=&\frac{1}{n^2}\sum_{y\in\frac{1}{n}\bb{Z}^2}(G_M(y)\rho(x-y)-G_{M,n}(y)\rho(x-y)).
	\end{eqnarray*}
	Since $K_1$ contains the support of the initial distribution $\rho$, the sums on the definitions of $I_1(x)$ and $I_2(x)$ are taken over a set which is safely away from the origin. Let us say $\widetilde{E}$.
	
	By symmetry, we write $I_1(x)$ as
	$$I_1(x)\!=\!\frac{1}{2}\!\!\sum_{y\in\widetilde{E}^{::}}\!\!\int_{[-\frac{1}{2n},\frac{1}{2n}]^2}\!\!\!\!\!\!\!G_M(y+z)\rho(x-y-z)+G_M(y-z)\rho(x-y+z)-2G_M(y)\rho(x-y)dz.$$
	Since the sum above has been taken over a set $\widetilde{E}$ that does not intersects a neighborhood of the origin, the function $G_M$ and, consequently, the function $G_M(\cdot)\rho(x-\cdot)$ is $C^2$, that means that using the second order Taylor's theorem we have
	$$|I_1(x)|\leq \frac{C}{n^4}\sum_{y\in\widetilde{E}^{::}}\sup_{|\beta|=2}|D^\beta(G_M(y)\rho(x-y))|\leq\frac{C}{n^2},$$
	
	where $C$ is a positive constant depending only on $\alpha$, which has changed in the last inequality.
	
	Let us now estimate $I_2$. As noticed before, the sum $I_2$ (as the sum $I_1$) is taken over a set $\widetilde{E}$ that does not intersects a neighborhood of the origin. By Theorem~\ref{GMntoGM}, we have
	$$|I_2(x)|\leq\frac{C}{n^\alpha},$$
	where  $C$ is a positive constant which only depends on $\alpha$ and on the set $\widetilde{E}$.
	We have proved that, for all $x\in E_M$
	$$|G_M*\rho(x)-G_{M,n}*\rho(x)|\leq\frac{C}{n^\alpha}.$$
	As a consequence, for $x\in E$ we have 
	$$|\widehat{L}_{M,n}((G_M*\rho-G_{M,n}*\rho)\omega)(x)|\leq\frac{C}{n^{2+\alpha}}\sum_{y\in A_{\frac{r}{n},M}^{::}\cup\partial A_{\frac{r}{n},M}^{::}}\frac{1}{|y|^{2+\alpha}}\leq C\sum_{y\in\bb{Z}^2\setminus B_r^{1,::}}\frac{1}{|y|^{2+\alpha}},$$
	which proves that $\widehat{L}_{M,n}((G_M*\rho-G_{M,n}*\rho)\omega)(x)$ is uniformly bounded in $E$.
	
	The same argument proves that $\frac{\Delta_n((G_M*\rho-G_{M,n}*\rho)\omega)}{n^{2-\alpha}}$ is uniformly bounded on $E$, which concludes the proof.
\end{proof}

\begin{lemma}\label{phi-aprox-sn}
	The function $\phi_n$ defined in \eqref{phidefinition} satisfies
	$$|s_n^{\square}-G*(\phi_n)^\square|\rightarrow 0$$
	uniformly on $K_2$.
\end{lemma}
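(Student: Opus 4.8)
On $K_2$ the cutoff $\omega$ equals $1$ on the whole stencil of $L_{M,n}$: since $K_2+B_{M+1}\subset K_3$ by \eqref{K-assumption} and $\omega\equiv1$ on $K_3$, for $x\in K_2$ and any $y$ in the support of the jump probability we have $\omega(x+y)=1$, so $\phi_n=-L_{M,n}(s_n\omega)=-L_{M,n}s_n=1-\nu_n$ on $K_2$; in particular $0\le\phi_n\le1$ there, and by Lemma~\ref{phibounded} the functions $\phi_n$ are uniformly bounded and supported in $K_4$. I would also record that
\[
\sum_{y\in\frac1n\bb Z^2}\phi_n(y)=0,
\]
because $L_{M,n}$ applied to a compactly supported function has vanishing total sum (reindex the double sum defining $L_{M,n}$). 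The proof then has two main steps: an exact identity relating $s_n$ to a discrete convolution with $G_{M,n}$, followed by the discretization estimate already used for the obstacles.

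\textbf{Step 1: $G_{M,n}*\phi_n=s_n\omega$.} Here $*$ is the normalized discrete convolution $\frac1{n^2}\sum_{y\in\frac1n\bb Z^2}$. Since $L_{M,n}$ is translation invariant and $L_{M,n}G_{M,n}=-n^2\delta_0$, we get $L_{M,n}(G_{M,n}*\phi_n)=(L_{M,n}G_{M,n})*\phi_n=-\phi_n$, which by \eqref{phidefinition} equals $L_{M,n}(s_n\omega)$. Hence $h:=G_{M,n}*\phi_n-s_n\omega$ is $L_{M,n}$-harmonic on all of $\frac1n\bb Z^2$. Now $s_n\omega$ has compact support, while, using $\sum_y\phi_n(y)=0$,
\[
G_{M,n}*\phi_n(x)=\frac1{n^2}\sum_{y}\big(G_{M,n}(x-y)-G_{M,n}(x)\big)\phi_n(y),
\]
which tends to $0$ as $|x|\to\infty$ because $|G_{M,n}(x-y)-G_{M,n}(x)|\to0$ uniformly for $y$ in the fixed compact set $K_4$ --- a consequence of Theorem~\ref{GMntoGM} together with the logarithmic asymptotics of $G_M$ from Proposition~\ref{estimateGM}. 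Thus $h$ is a bounded, $L_{M,n}$-harmonic function vanishing at infinity, so the maximum principle for $L_{M,n}$ gives $h\equiv0$. In particular $s_n^\square=(G_{M,n}*\phi_n)^\square$ on $K_2$.

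\textbf{Step 2: discretization.} For $x\in K_2$ write, with $\alpha_n^x$ as in \eqref{alpha-n},
\[
(G_{M,n}*\phi_n)^\square(x)-G_M*(\phi_n)^\square(x)=\sum_{y\in\frac1n\bb Z^2}\phi_n(y)\Big(\tfrac1{n^2}G_{M,n}(x^{::}-y)-\!\!\int_{y^\square}\!\!G_M(x-z)\,dz\Big)=\sum_y\phi_n(y)\,\alpha_n^x(y).
\]
By Lemma~\ref{alpha-n-lemma}, $|\alpha_n^x(y)|\le Cf(x^{::}-y)/n^{1+\alpha}$ with $f\in L^1_{loc}$; since the $\phi_n$ are uniformly bounded and supported in $K_4$, the same Riemann-sum estimate as in the proof of Proposition~\ref{obstacle-convergence} bounds this sum by $Cn^{1-\alpha}$ uniformly on $K_2$ (the single term $y=x^{::}$, for which one checks directly that $\alpha_n^x(x^{::})=O(n^{-\alpha})$ from the behavior of $G_{M,n}$ and $G_M$ near $0$, is treated separately). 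Combining Steps~1 and~2 yields $|s_n^\square-G_M*(\phi_n)^\square|\le Cn^{1-\alpha}\to0$ uniformly on $K_2$, which is the claim (reading the kernel in the statement as the truncated Green's function $G_M$; otherwise one applies the comparison $G_M=G-\frac{\alpha^2}{(2\pi)^2c}+O(M^{\alpha-2})$ of Corollary~\ref{GMtoG}, the additive constant being annihilated by $\sum_y\phi_n(y)=0$).

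\textbf{Main obstacle.} The delicate point is Step~1: that $G_{M,n}*\phi_n$ and $s_n\omega$ coincide \emph{exactly}, rather than up to a bounded harmonic error. This rests on the vanishing of $G_{M,n}*\phi_n$ at infinity --- which needs both $\sum_y\phi_n(y)=0$ and the increment control for $G_{M,n}$ coming from Theorem~\ref{GMntoGM} --- and on a maximum principle for $L_{M,n}$-harmonic functions on $\frac1n\bb Z^2$.
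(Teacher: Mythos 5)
Your Step 2 is exactly the paper's proof: the paper writes $s_n^\square(x)-G_M*(\phi_n)^\square(x)=\sum_{y\in K_4^{::}}\phi_n(y)\alpha_n^x(y)$ and bounds it by $Cn^{1-\alpha}$ via Lemma~\ref{phibounded}, Lemma~\ref{alpha-n-lemma} and the Riemann-sum comparison, so the core of your argument coincides with theirs (and you correctly read the kernel in the statement as $G_M$, which is what the paper actually uses). The difference is your Step 1: the paper simply asserts $s_n=G_{M,n}*\phi_n$ on $K_2^{::}$ without comment, whereas you justify it, and your justification is the right one in outline (both sides have the same $L_{M,n}$-image, so their difference is harmonic, and one rules out a nonzero harmonic correction). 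One caveat: your stated reason that $G_{M,n}(x-y)-G_{M,n}(x)\to0$ as $|x|\to\infty$ does not follow from Theorem~\ref{GMntoGM} plus Proposition~\ref{estimateGM} for \emph{fixed} $n$, because the error term $\tfrac{K}{n^\alpha}\bigl(1+|x|^{\alpha-2}+|x|^{-2}\bigr)$ tends to $\tfrac{K}{n^\alpha}$, not to $0$, as $|x|\to\infty$, so the increments of the error need not cancel. This is easily repaired: either argue directly from the Fourier representation \eqref{greenformuladimn} (the integrand of the increment is $L^1$ on $[-n\pi,n\pi]^2$ and Riemann--Lebesgue gives decay), or invoke the standard fact that the potential kernel of a recurrent planar walk with bounded jumps has vanishing increments; alternatively, note that even the weaker conclusion $h\equiv c_n$ with $|c_n|\le Cn^{-\alpha}$ (which your bound does give, via constancy of bounded harmonic functions) already suffices for the lemma. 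With that repair the proof is complete, and it is in fact more careful than the paper's at the one point the paper leaves implicit.
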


\begin{proof}
	In the beginning of the proof of Lemma \ref{phibounded} we observed that $\supp(\phi_n)\subset K_4$. Note that
	$$G_M*(\phi_n)^\square(x)=\sum_{y\in K_4^{::}}\phi_n(y)\int_{y^\square}G_M(x-z)dz$$
	and
	$$s_n(x)=G_{M,n}*\phi_n(x) \text{~~~~for~~~~}x\in K_2^{::}.$$
	Hence for $x\in K_2$
	\begin{eqnarray*}
		s_n^\square(x)-G_M*(\phi_n)^\square(x)&=&G_{M,n}*\phi_n(x^{::})-G_M*(\phi_n)^\square(x)\\
		&=&\sum_{y\in K_4^{::}}\phi_n(y)\alpha_n^x(y)
	\end{eqnarray*}
	for $\alpha_n^x(y)$ defined in \eqref{alpha-n}.
	
	By Lemmas \ref{phibounded} and \ref{alpha-n-lemma}, there exist a positive constant $C$ independent of $n$ and $x$, and a positive function $f$ in $L_{loc}^1$ such that 
	$$|s_n^\square(x)-G_M*(\phi_n)^\square(x)|\leq\frac{C}{n^{1+\alpha}}\sum_{y\in K_4^{::}}f(y-x^{::})	\leq\frac{C}{n^{\alpha-1}}\int_{(K_2+K_4)}f(y)dy	\leq\frac{C}{n^{\alpha-1}},$$
	where the constant $C$ has changed from one inequality to the other. In the second inequality we used the approximation of the integral by Riemann's sums and the fact that $x\in K_2$ and $y\in K_4$.
\end{proof}

\begin{proof}[Proof of Proposition \ref{majorant-convergence}]
	As we pointed out before, we only need to prove that $s_n$ converges to $s$ uniformly in a ball $\Omega$ as the one considered in Lemma \ref{suppofodom}. Recall from \eqref{sn-K1} and \eqref{s-K1} the expression we are using for $s_n$ and $s$. 
	
	Recall from \eqref{K-assumption}, our assumptions about the sets $K_j$, for $j=1,2,3,4$. We will prove that $s_n$ converges to $s$ uniformly on $K_1$.
	
	We write
	
	\begin{equation}\label{mollifier}
		\tilde{s}(x)=\int_{\bb{R}^2}s(y)\lambda^{-2}\eta\left(\frac{x-y}{\lambda}\right)dy,
	\end{equation}
	where $\eta$ is the standard smooth mollifier
	\begin{equation*}
		\eta(x)=
		\begin{cases}
			\beta e^{1/(|x|^2-1)}  &  \text{~~~~if~~} |x|<1,\\
			0  &  \text{~~~~if~~} |x|\geq 1,
		\end{cases}
	\end{equation*}
	normalized so that $\int_{\bb{R}^2}\eta dx=1$ (see \cite{Evans}). Then $\tilde{s}$ is smooth and superharmonic.
	
	By Proposition \ref{continuityofS}, $s$ is continuous, and by compactness, $s$ is uniformly continuous on $K_2$, so taking $\lambda$ sufficiently small in \eqref{mollifier}, we have $|s-\tilde{s}|<\epsilon$ in $K_2$.
	
	By Lemma \ref{LntoL}, there exists a positive constant $C_1$ such that the function 
	$$q_n(x)=\tilde{s}^{::}-\frac{C_1}{n^{2-\alpha}}|x|^2$$
	satisfies $L_{M,n}q_n\leq0$ in $K_2^{::}$. By Proposition \ref{obstacle-convergence} we have $\gamma_n^\square\rightarrow\gamma$ uniformly in $K_2$. Taking $n$ large enough to ensure that $\frac{C_1}{n^{2-\alpha}}|x|^2\leq\epsilon$ in $K_2$ and $|\gamma_n-\gamma^{::}|<\epsilon$ in $K_2^{::}$, we obtain
	$$q_n>\tilde{s}^{::}-\epsilon>s^{::}-2\epsilon\geq\gamma^{::}-2\epsilon>\gamma_n-3\epsilon$$
	in $K_2^{::}$.
	
	Now, since the function $f_n=\max(q_n+3\epsilon,\gamma_n)$ coincides with $q_n+3\epsilon$ in $K_2^{::}$, then $f_n$ satisfies $L_{M,n}f_n\leq0$ in $K_1^{::}$. It follows that $f_n\geq s_n$, hence, in $K_1^{::}$, we have
	$$s_n\leq q_n+3\epsilon<\tilde{s}^{::}+3\epsilon<s^{::}+4\epsilon.$$
	By the uniform continuity of $s$ on $K_1$, taking $n$ larger if necessary, we have $|s-s^{::\square}|<\epsilon$ in $K_1$ and, hence, $s_n^\square<s+5\epsilon$ in $K_1$.
	
	For the reverse inequality, recall from \eqref{phidefinition} the definition of the function $\phi_n$. By Lemma \ref{phi-aprox-sn}, we have
	$$|s_n^\square-G_M*(\phi_n)^\square|<\epsilon$$
	in $K_2$ and, hence,
	$$G_M*(\phi_n)^\square>\gamma_n^\square-\epsilon>\gamma-2\epsilon$$ 
	in $K_2$ for $n$ sufficiently large. Since $\phi_n$ is nonnegative on $K_2$, by Lemma \ref{inversion} the function $G_M*(\phi_n)^\square$ is superharmonic on $K_2$, so the function
	$$g_n=\max(G_M*(\phi_n)^\square+2\epsilon,\gamma)$$
	coincides with $G_M*(\phi_n)^\square+2\epsilon$ in $K_2$, then $g_n$ is superharmonic on $K_1$ for sufficiently large $n$. By the definition of $s$ it follows that $g_n\geq s$, hence, 
	$$s_n^\square>G_M*(\phi_n)^\square-\epsilon\geq s-3\epsilon$$
	on $K_1$ for sufficiently large $n$.
\end{proof}

Now we conclude the proof of Therem \ref{odoconvergence}.

\begin{proof}[Proof of Theorem \ref{odoconvergence}]
	Let $\Omega$ be a ball as in Lemma \ref{suppofodom}. By Propositions \ref{obstacle-convergence} and \ref{majorant-convergence} we have $\gamma_n^\square\rightarrow\gamma$ and $s_n^\square\rightarrow s$ uniformly on $\Omega$. By \eqref{odmeqdimn} we have $u_n=s_n-\gamma_n$. Since $u_n^\square=0=u$ outside $\Omega$, we conclude $u_n^\square\rightarrow s-\gamma=u$ uniformly.
\end{proof}

\section{Convergence of the final distribution}\label{final-distribution}

Now we are in position to prove Theorem \ref{convergence-distribution}, which tells us about the weak-$*$ convergence of the final distributions.

\begin{proof}[Proof of Theorem \ref{convergence-distribution}]
	Recall from \eqref{Ln(u)} that the final distribution $\nu_n$ of the truncated $\alpha$-stable divisible sandpile in $\frac{1}{n}\bb{Z}^2$ starting with initial density of mass $\rho_n=\rho^{::}$ satisfies the equation
	$$\nu_n=\rho_n+L_{M,n}u_n.$$
	Let $\phi\in C_c^\infty(\bb{R}^2)$ be a test function. We have
	\begin{equation}\label{distribution}
		\langle\nu_n^\square,\phi\rangle=\langle (L_{M,n}(u_n-G_{M,n}*\rho_n))^\square,\phi\rangle=\langle u_n^\square-(G_{M,n}*\rho_n)^\square, L_{M,n}\phi\rangle.
	\end{equation}
	
	Lemma \ref{LntoL} ensures that $L_{M,n}\phi$ converges to $L_M\phi$ uniformly. By Theorem \ref{odoconvergence}  and Proposition \ref{obstacle-convergence} we have the convergence $(u_n-G_{M,n}*\rho^{::})^\square\rightarrow u-G_M*\rho$ uniformly on compact sets of $\bb{R}^2$. Since $\phi$ has compact support, there exists a ball which contains the support of $L_{M,n}\phi$ for all $n$. From \eqref{distribution} we have
	$$\lim_{n\rightarrow\infty}\langle\nu_n^\square,\phi\rangle=\langle u-G_M*\rho,L_M\phi\rangle=\langle\rho+L_Mu,\phi\rangle.$$
	Let us denote $\nu=\rho+L_Mu$, we have just proved that $\nu_n$ converges towards $\nu$ in the weak-$*$ topology. We must show that the distribution $\nu$ is actually a function.  
	
	By Lemma \ref{suppofodom} we can also say that there exists a ball $U$ which contains the support of the final distribution $\nu_n^\square$ for all $n$. Then we have
	$$\langle\nu_n^\square,\phi\rangle=\int_U\nu_n^\square(x)\phi(x)dx.$$
	Since $0\leq\nu_n^\square\leq 1$ for all $n$, by Holder's inequality we have
	\begin{equation}\label{nu-measure}
		|\langle\nu,\phi\rangle|=\lim_{n\to\infty}|\langle\nu_n^\square,\phi\rangle|\leq\int_U|\phi(x)|dx\leq\mu(U\cap\supp(\phi))\parallel\phi\parallel_\infty,
	\end{equation}
	where $\mu$ denotes the Lebesgue measure. This shows that $\nu$ is a continuous linear functional, then by the Riesz-Markov theorem, there exists a Radon measure $\lambda$ such that
	$$\langle\nu,\phi\rangle=\int\phi(x)d\lambda(x).$$ 
	Moreover, the measure $\lambda$ is nonnegative, since the functional $\nu$ is positive.
	
	Let $A$ be a measurable set, consider a family $(A_\epsilon)_\epsilon$ of measurable sets such that $A_\epsilon\downarrow A$ when $\epsilon\downarrow0$. Now consider a family $(g_\epsilon)_\epsilon$ of infinitely differentiable functions such that $0\leq g_\epsilon\leq 1$ and $\supp(g_\epsilon)\subset A_\epsilon$. In particular, from \eqref{nu-measure}, for all $\epsilon$ we have 
	$$\lambda(A)\leq\langle\nu,g_\epsilon\rangle\leq\mu(A_\epsilon)\parallel g_\epsilon\parallel_\infty.$$
	Taking limits as $\epsilon\downarrow0$ we obtain
	$$\lambda(A)\leq\mu(A).$$
	Then $\lambda$ is absolutely continuous with respect to the Lebesgue measure, therefore, the linear functional $\nu$ is indeed a function.
\end{proof}

\section{Proofs of analytical estimates}\label{proofs}
In this section we will proof the propositions and lemmas of Section \ref{analytical-estimates}. We will often use in our estimates a universal constant $C$ which may change from line to line and depends only on the parameter $\alpha$, except when we specify another possible dependency. 

\begin{proof}[Proof of Lemma \ref{knpositive}]
	
	For $R>0$, consider the function $F_R$ in $\bb{Z}^2$, defined as
	$$\widehat{F}_R(y)=\mu([y-\tfrac{1}{2},y+\tfrac{1}{2}]^2\cap B_R).$$
	For a domain $A$ in $\bb{R}^2$, we denote $A^{n,::}$ for its restriction to $\frac{1}{n}\bb{Z}^2$, that is, $A^{n,::}=A\cap\frac{1}{n}\bb{Z}^2$, and denote the discrete boundary as
	$$\partial A^{n,::}=\{x\in\tfrac{1}{n}\bb{Z}^2;[x-\tfrac{1}{2n},x+\tfrac{1}{2n}]^2 ~/\!\!\!\!\!\subset A\text{~and~}[x-\tfrac{1}{2n},x+\tfrac{1}{2n}]^2\cap A\neq\emptyset\}.$$
	Notice that $\widehat{F}_R$ is equal to $1$ in $B_R^{1,::}\setminus\partial B_R^{1,::}$, and equal to $0$ outside $B_R^{1,::}\cup\partial B_R^{1,::}$.
	Let us denote 
	$$k(R)=\int_{B_R}\frac{1}{|y|^\alpha}dy-\sum_{y\in B_R^{1,::}\cup\partial B_R^{1,::}\setminus \{0\}}\frac{\widehat{F}(y)}{|y|^\alpha}.$$
	Consider $R'>R$. A change of variables allows us to write
	$$k(R')-k(R)=\frac{n^{2-\alpha}}{2}\sum_{y\in A_{\frac{R}{n},\frac{R'}{n}}^{n,::}\cup\partial A_{\frac{R}{n},\frac{R'}{n}}^{n,::}}\int_{[-\frac{1}{2n},\frac{1}{2n}]^2}\Big(\frac{1}{|y+z|^\alpha}+\frac{1}{|y-z|^\alpha}-\frac{2}{|y|^\alpha}\Big)dz$$
	We use the second order Taylor's theorem and obtain
	\begin{eqnarray*}
		\nonumber|k(R')-k(R)|&\leq&\frac{C}{n^{2+\alpha}}\sum_{y\in A_{\frac{R}{n},\frac{R'}{n}}^{n,::}\cup\partial A_{\frac{R}{n},\frac{R'}{n}}^{n,::}}\frac{1}{|y|^{2+\alpha}}\\
		&\leq&\frac{C}{n^\alpha}\int_{\bb R^2\setminus B_{\frac{R}{n}}}\frac{1}{|y|^{2+\alpha}}dy\\
		&\leq&\frac{C}{R^\alpha}.
	\end{eqnarray*}
	We have proved that the sequence $\{k(R)\}_R$ is Cauchy, so it does converge. We can ensure then that there exists $S>0$ such that
	\begin{equation}\label{k(R)-bounded}
		|k(R)|\leq S \text{~~for all~~} R.
	\end{equation}
	Although the sequence $k(R)$ does converge, neither the integral nor the sum on its definition converge. Then there exists $r>0$ so that
	\begin{equation}\label{sum-diverges}
		\sum_{y\in B_r^{1,::}}\frac{1}{|y|^\alpha}>S.
	\end{equation}  
	Taking $r$ as in \eqref{sum-diverges} and $M$ bigger than $r$, by \eqref{k(R)-bounded} we have that $k_1=k_1(r,M)$ as in \eqref{kdim1} is positive. 
\end{proof}

\begin{proof}[Proof of Lemma \ref{cn-to-c}]
	Denote $k:=\lim_{n\rightarrow\infty}k_n$. In particular, from the proof of Lemma \ref{knpositive}, we have
	$$|k-k_n|\leq\frac{C}{n^\alpha}$$
	for a positive constant $C$ which only depends on $\alpha$.
	
	On the other hand, we also have
	$$\sum_{y\in\bb{Z}^2}\frac{1}{|y|^{2+\alpha}}-\sum_{y\in B_{Mn}^{1,::}\cup\partial B_{Mn}^{1,::}}\frac{F_n(\tfrac{y}{n})}{|y|^{2+\alpha}}\leq\sum_{y\in\bb{Z}^2\setminus B_{Mn}}\frac{1}{|y|^{2+\alpha}}\leq\frac{C}{n^\alpha}$$
	for a constant $C$ depending only on $\alpha$. This concludes the proof.
\end{proof}

\begin{proof}[Proof of Lemma \ref{LntoL}]
	Let us write
	$$L_{M,n}f(x)=\frac{4c_nk_n\Delta_nf(x)}{n^{2-\alpha}}+\frac{c_n}{n^2}\sum_{y\in A_{\frac{r}{n},M}^{::}\cup\partial A_{\frac{r}{n},M}^{::}}\frac{F_n(y)(f(x+y)-f(x))}{|y|^{2+\alpha}},$$
	where $\Delta_nf(x)=\frac{n^2}{4}\sum_{y\in\frac{1}{n}\bb{Z}^2,|y|=\frac{1}{n}}(f(x+y)-f(x))$.
	
	Let us suppose $x\in K$ for a compact set $K$. Since $f\in C^2$, there exists a constant $C$, which depends on the first and second derivatives of $f$ in $K$, such that
	$$\Big|\frac{4c_nk_n\Delta_nf(x)}{n^{2-\alpha}}\Big|\leq\frac{C\Delta f(x)}{n^{2-\alpha}}\leq\frac{C}{n^{2-\alpha}}.$$ 
	Denote
	$$g_x(y)=\frac{f(x+y)+f(x-y)-2f(x)}{2|y|^{2+\alpha}}.$$
	Since, by Lemma \ref{cn-to-c}, $|c_n-c|\leq\frac{C}{n^\alpha}$ for $C>0$ depending only on $\alpha$, it suffices to prove that 
	\begin{equation}\label{LntoLineq}
		\left|\int_{B_M}g_x(y)dy-\frac{1}{n^2}\sum_{y\in A_{\frac{r}{n},M}^{::}\cup\partial A_{\frac{r}{n},M}^{::}}F_n(y)g_x(y)\right|\leq\frac{C}{n^{2-\alpha}}
	\end{equation}
	for a constant $C$ depending on the first and second derivatives of $f$ in $K+B_M$.
	
	In order to prove the inequality \eqref{LntoLineq} we write the left-hand side as
	$$\int_{B_{\frac{r}{n}}}g_x(y)dy+\frac{1}{2}\sum_{y\in A_{\frac{r}{n},M}^{::}\cup\partial A_{\frac{r}{n},M}^{::}}\int_{[-\frac{1}{2n},\frac{1}{2n}]^2}\tilde{g}_{x,y}(z)dz,$$
	where $\tilde{g}_{x,y}(z)=g_x(y+z)+g_x(y-z)-2g_x(y)$. For $y$ small, we have $|g_x(y)|\leq\frac{C}{|y|^\alpha}$ for $x\in K$, with $C$ depending on the second partial derivatives of $f$ in $K$.
	Then 
	$$\Big|\int_{B_{\frac{r}{n}}}g_x(y)dy\Big|\leq\frac{C}{n^{2-\alpha}}.$$
	We need to estimate
	\begin{equation}\label{LnL-1}
		I:=\sum_{y\in A_{\frac{r}{n},M}^{::}\cup\partial A_{\frac{r}{n},M}^{::}}\int_{[-\frac{1}{2n},\frac{1}{2n}]^2}\tilde{g}_{x,y}(z)dz.
	\end{equation}
	We use the second order Taylor's theorem in every argument of the sum and write
	$$\tilde{g}_{x,y}(z)=\sum_{|\beta|=2}z^\beta R_\beta(z),$$
	where $R_\beta(z)=\frac{|\beta|}{\beta!}\int_0^1(1-t)D^\beta g_{x,y}(tz)dt$. Since $f$ is twice differentiable, there exists a positive constant $C$ which only depends on the first and second partial derivatives of $f$ in $K+B_M$ and in the parameter $\alpha$, such that for all $z\in[-\frac{1}{2n},\frac{1}{2n}]^2$, it holds that 
	$$|R_\beta(z)|\leq C\Big(\frac{1}{|y+z|^{2+\alpha}}+\frac{1}{|y-z|^{2+\alpha}}\Big).$$
	Notice that $|z|\leq\frac{\sqrt{2}}{2n}$ if $z\in[-\frac{1}{2n},\frac{1}{2n}]^2$, since $y\in\frac{1}{n}\bb Z^2\setminus\{0\}$, we have (for a bigger constant $C$)
	$$|R_\beta(z)|\leq \frac{C}{(|y|-\frac{\sqrt{2}}{2n})^{2+\alpha}}.$$
	We conclude
	\begin{eqnarray*}
		|I|&\leq&\frac{C}{n^4}\sum_{y\in A_{\frac{r}{n},M}^{::}\cup\partial A_{\frac{r}{n},M}^{::}}\frac{1}{(|y|-\frac{\sqrt{2}}{2n})^{2+\alpha}}\\
		&\leq&\frac{C}{n^{2-\alpha}}\sum_{y\in\bb{Z}^2\setminus B_r}\frac{1}{(|y|-\frac{\sqrt{2}}{2})^{2+\alpha}}\\
		&\leq&\frac{C}{n^{2-\alpha}}.
	\end{eqnarray*}
	This finishes our proof. 
\end{proof}

\begin{proof}[Proof of lemma \ref{inversion-smooth-lemma}]
	
	Note that
	$$L_M(G_M*\phi)(x)=\int_{\bb{R}^2}G_M(y)L_M\phi(x-y)dy.$$
	By the symmetry of $L_M$, we have
	$$L_M(G_M*\phi)(x)=\int_{\bb{R}^2}L_MG_M(x-y)\phi(y)dy
	=\int_{\bb{R}^2}F_x(y)dy,$$
	where 
	\begin{equation}\label{greeninv1}
		F_x(y)=\frac{c\phi(y)}{2(2\pi)^2}\int_{B_M}\int_{\bb{R}^2}\frac{e^{i(\theta\cdot x-y+z)}+e^{i(\theta\cdot x-y+z)}-2e^{i(\theta\cdot x-y)}}{|z|^{2+\alpha}\psi_M(\theta)}d\theta dz.
	\end{equation}
	Since the integrand in \eqref{greeninv1} is not absolutely integrable, we are not able to apply Fubini's theorem directly. To overcome this difficulty we make the following trick,
	$$F_x(y)=\lim_{t\rightarrow0}\frac{c\phi(y)}{2(2\pi)^2}\int_{B_M}\int_{\bb{R}^2}\frac{e^{i(\theta\cdot x-y+z)}+e^{i(\theta\cdot x-y-z)}-2e^{i(\theta\cdot x-y)}}{|z|^{2+\alpha}\psi_M(\theta)}e^{-t|\theta|^2}d\theta dz.$$
	Now we use Fubini's theorem and integrate first on $z$. By the definition of $\psi_M$ we have
	$$F_x(y)=\lim_{t\rightarrow0}\frac{-\phi(y)}{(2\pi)^2}\int_{\bb{R}^2}e^{i(\theta\cdot x-y)}e^{-t|\theta|^2}d\theta.$$
	Hence, using Fubini's theorem again
	\begin{eqnarray*}
		\int_{\bb{R}^2}\phi(y)L_MG_M(x-y)dy&=&\lim_{t\rightarrow0}\frac{-1}{(2\pi)^2}\int_{\bb{R}^2}\int_{\bb{R}^2}\phi(y)e^{i(\theta\cdot x-y)}e^{-t|\theta|^2}dy d\theta\\ 
		&=&\lim_{t\rightarrow0}\frac{-1}{(2\pi)}\int_{\bb{R}^2}\hat{\phi}(\theta)e^{i(\theta\cdot x)}e^{-t|\theta|^2}d\theta\\
		&=&\frac{-1}{(2\pi)}\int_{\bb{R}^2}\hat{\phi}(\theta)e^{i(\theta\cdot x)}d\theta\\
		&=&-\phi(x).
	\end{eqnarray*}
	Since $\phi\in C_c^\infty$, in particular, $\hat{\phi}\in L^1$. This fact justifies the last two lines above.
\end{proof}

\begin{proof}[Proof of Lemma \ref{estimateG1}]
	Before starting the proof, a little note about the constants may be useful. 
	\begin{remark}\label{constants}
		Recall that the constant $c$ is defined by $c=\lim c_n$. Let us denote by $c_\alpha$ and $\tilde{c}_\alpha$, the constant satisfying
		\begin{eqnarray*}
			c_\alpha|\theta|^\alpha&=&c\int_{\bb{R}^2}\frac{1-\cos(\theta\cdot x)}{|x|^{2+\alpha}}dx\\
			\text{and~~~~~~}\frac{\tilde{c}_\alpha}{|\theta|^{2-\alpha}}&=&\int_{\bb{R}^2}\frac{\cos(\theta\cdot x)}{|x|^\alpha}dx.
		\end{eqnarray*}
		Standard computations show that $\tilde{c}_\alpha=\frac{\alpha^2c_\alpha}{c}$. 
	\end{remark}
	
	If $|\theta|<1$ we use Taylor's theorem and see that 
	$$\psi_1(\theta)=c\int_{B_1}\frac{1-\cos(\theta\cdot y)}{|y|^{2+\alpha}}dy=c\int_{B_1}\frac{(\theta\cdot y)^2}{2|y|^{2+\alpha}}dy+|\theta|^4g_1(\theta)$$
	for a smooth function $g$ in $B_1$.
	
	Note that 
	$$\frac{c}{2}\int_{B_1}\frac{(\theta\cdot y)^2}{|y|^{2+\alpha}}dy=\frac{c|\theta|^2}{4}\int_{B_1}\frac{1}{|y|^\alpha}dy=\frac{\sigma_1^2|\theta|^2}{4}.$$
	Then for $\theta\in B_1$
	\begin{equation}\label{psi1-1}
		\psi_1(\theta)=\frac{\sigma_1^2}{4}|\theta|^2+|\theta|^4g_1(\theta).
	\end{equation}
	Since $\psi_1(\theta)=0$ only if $\theta=0$, we easily see that there exists a smooth function $g_2$ in $B_1$ such that
	\begin{equation}\label{psi1inv-1}
		\frac{1}{\psi_1(\theta)}=\frac{4}{\sigma_1^2|\theta|^2}+g_2(\theta).
	\end{equation}
	Now let us suppose $|\theta|\geq 1$. Recalling from \ref{constants} the definition of the constant $c_\alpha$, we write 
	\begin{equation}\label{psi1-2}
		\psi_1(\theta)=c_\alpha|\theta|^\alpha-c\int_{B_1^c}\frac{1-\cos(\theta\cdot y)}{|y|^{2+\alpha}}dy.
	\end{equation}
	The function $h_1$ defined by $h_1(\theta):=c\int_{B_1^c}\frac{1-\cos(\theta\cdot y)}{|y|^{2+\alpha}}dy$ is clearly bounded. We can easily see that for $|\theta|>1$ there exists a bounded function $h_2$, such that
	\begin{equation}\label{psi1inv-2}
		\frac{1}{\psi_1(\theta)}=\frac{1}{c_\alpha|\theta|^\alpha}+\frac{h_2(\theta)}{|\theta|^{2\alpha}}.
	\end{equation}
	Furthermore the function $h_2$ satisfies $\frac{d}{d\theta_i}\frac{h_2(\theta)}{|\theta|^{2^\alpha}}=\frac{h_{2,i}(\theta)}{|\theta|^{2\alpha+1}}$ for $i=1,2$, and $\Delta(\frac{h_2(\theta)}{|\theta|^{2\alpha}})=\frac{h_3(\theta)}{|\theta|^{2\alpha+2}}$ for bounded functions $h_{2,i}$ and $h_3$ in $B_1^c$. The proof of this assertion lies in the computation of the first two derivatives of $\frac{1}{\psi_1(\theta)}-\frac{1}{c_\alpha|\theta|^\alpha}$ outside $B_1$.
	
	We can write $G_1$ as 
	\begin{equation}\label{G1tilde}
		G_1(x)=\frac{1}{(2\pi)^2}\widetilde{G}_1(x)-\frac{1}{(2\pi)^2}\widetilde{G}_1(x_0),
	\end{equation}
	where 
	\begin{equation}\label{Gtilde-def}
		\widetilde{G}_1(x)=\int_{B_1}\frac{\cos(\theta\cdot x)-1}{\psi_1(\theta)}d\theta+\int_{B_1^c}\frac{\cos(\theta\cdot x)}{\psi_1(\theta)}d\theta.
	\end{equation}
	Let us denote 
	\begin{eqnarray}
		f_1(x)&=&\int_{B_1}\frac{\cos(\theta\cdot x)-1}{|\theta|^2}d\theta,\label{f1}\\
		f_2(x)&=&\int_{B_1^c}\frac{\cos(\theta\cdot x)}{|\theta|^\alpha}d\theta,\label{f2}\\
		f_3(x)&=&\int_{B_1}(\cos(\theta\cdot x)-1)g_2(\theta)d\theta,\label{f3}\\
		f_4(x)&=&\int_{B_1^c}\frac{h_2(\theta)\cos(\theta\cdot x)}{|\theta|^{2\alpha}}d\theta\label{f4}.
	\end{eqnarray}
	By \eqref{psi1inv-1} and \eqref{psi1inv-2} we have 
	\begin{equation}\label{f1f2f3f4}
		\widetilde{G}_1(x)=\frac{4}{\sigma_1^2}f_1(x)+\frac{1}{c_\alpha}f_2(x)+f_3(x)+f_4(x).
	\end{equation}
	We split the estimation of $G_1$ in four steps, consisting in the estimation for $|x|<1$ and $|x|\geq1$ of the functions $f_1$, $f_2$, $f_3$, and $f_4$.
	
	\emph{Estimate of} $f_1$. Suppose first $|x|<1$, since in \eqref{f1} we are integrating a bounded function, $f_1$ is also bounded in the unitary ball. That is, there exists a constant $C>0$ such that
	\begin{equation}\label{step.1-estim-B1}
		|f_1(x)|<C\text{~for all~}|x|<1.
	\end{equation}
	Now let us assume $|x|\geq1$, we denote $\hat{x}=\frac{x}{|x|}$, and perform a change of variables to see that
	\begin{eqnarray*}
		f_1(x)&=&\int_{B_{|x|}}\frac{\cos(\theta\cdot \hat{x})-1}{|\theta|^2}d\theta\\
		&=&-\int_{B_{|x|}\setminus B_1}\frac{1}{|\theta|^2}d\theta+\int_{B_{|x|}\setminus B_1}\frac{\cos(\theta\cdot \hat{x})}{|\theta|^2}d\theta+\int_{B_1}\frac{\cos(\theta\cdot \hat{x})-1}{|\theta|^2}d\theta.
	\end{eqnarray*}
	The first term of the sum above can be explicitly computed and give us a logarithmic term
	$$-\int_{B_{|x|}\setminus B_1}\frac{1}{|\theta|^2}d\theta=-2\pi\log|x|.$$
	Notice that the last term is a constant, let us call it $C_1$. Let us investigate the second term, we call it $\tilde{h}(x)$, that is
	\begin{equation}\label{h-tilde}
		\tilde{h}(x)=\int_{B_{|x|}\setminus B_1}\frac{\cos(\theta\cdot \hat{x})}{|\theta|^2}d\theta.
	\end{equation}
	We integrate by parts in the following way:
	\begin{eqnarray*}
		\tilde{h}(x)&=&\int_{B_{|x|}\setminus B_1}\frac{\Delta(1-\cos(\theta\cdot \hat{x}))}{|\theta|^2}d\theta\\
		&=&\int_{B_{|x|}\setminus B_1}\frac{4(1-\cos(\theta\cdot\hat{x}))}{|\theta|^4}d\theta-\int_{\partial(B_{|x|}\setminus B_1)}\frac{1}{|\theta|^2}\frac{\partial}{\partial\nu}\cos(\theta\cdot\hat{x})dS\\
		&&~~~~~~~-\int_{\partial(B_{|x|}\setminus B_1)}(1-\cos(\theta\cdot\hat{x}))\frac{\partial}{\partial\nu}\frac{1}{|\theta|^2}dS,
	\end{eqnarray*}
	where $\nu$ represents the \emph{inward} pointing unit normal along $\partial(B_{|x|}\setminus B_1)$ (see \cite{Evans}). The integration by parts procedure allows us to conclude that $\tilde{h}(x)$ is bounded. 
	
	We have proved that there exists an explicitly computable constant $C_1$ and a bounded function $\tilde{h}(x)$ given by \eqref{h-tilde}, such that
	\begin{equation}\label{step.1-estim} 
		f_1(x)=-2\pi\log|x|+\tilde{h}(x)+C_1\text{~for all~}|x|\geq 1.
	\end{equation}
	
	\emph{Estimate of} $f_2$. Recall from \ref{constants} the definition of the constant $\tilde{c}_\alpha$. Let us write
	$$f_2(x)=\int_{B_1^c}\frac{\cos(\theta\cdot x)}{|\theta|^\alpha}d\theta=\frac{\tilde{c}_\alpha}{|x|^{2-\alpha}}-\int_{B_1}\frac{\cos(\theta\cdot x)}{|\theta|^\alpha}d\theta.$$
	Let us investigate the behavior of the function 
	$$\tilde{f}_2(x):=\int_{B_1}\frac{\cos(\theta\cdot x)}{|\theta|^\alpha}d\theta.$$
	We easily see that the function $\tilde{f}_2$ is bounded for $|x|<1$. Then there exists a constant $C>0$ independent of $x$ such that 
	\begin{equation}\label{step.2-estim-B1}
		\left|f_2(x)-\frac{\tilde{c}_\alpha}{|x|^{2-\alpha}}\right|<C\text{~for all~}|x|<1.
	\end{equation} 
	Let us study its asymptotic behavior for $x$ large. Suppose $|x|\geq1$, we write
	$$\tilde{f}_2(x)=\frac{1}{|x|^2}\int_{B_1}\frac{\Delta(1-\cos(\theta\cdot x))}{|\theta|^\alpha}d\theta.$$
	We perform an integration by parts bellow. We again denote by $\nu$ the inward pointing unit normal vector along the surface $\partial B_1$:
	\begin{eqnarray*}
		\tilde{f}_2(x)&=&\frac{\alpha^2}{|x|^2}\int_{B_1}\frac{1-\cos(\theta\cdot x)}{|\theta|^{2+\alpha}}d\theta-\frac{1}{|x|^2}\int_{\partial B_1}(1-\cos(\theta\cdot x))\frac{\partial}{\partial\nu}\frac{1}{|\theta|^\alpha}dS\\
		&&~~~~~~~~~~~~~~+\frac{1}{|x|^2}\int_{\partial B_1}\frac{1}{|\theta|^\alpha}\frac{\partial}{\partial\nu}(1-\cos(\theta\cdot x))dS.
	\end{eqnarray*} 
	The first two integrals above decay as $\frac{1}{|x|^2}$, and the last one decays as $\frac{1}{|x|}$. Since we are considering $|x|\geq1$ we obtain that there exists a constant $C>0$ which only depends on $\alpha$, such that $|\tilde{f}_2(x)|<\frac{C}{|x|}$.
	
	Since $f_2(x)=\frac{\tilde{c}_\alpha}{|x|^{2-\alpha}}-\tilde{f}_2(x)$, and since $2-\alpha<1$, we obtain that there exists a positive constant $C$ which depends on $\alpha$, such that
	\begin{equation}\label{step.2-estim}
		|f_2(x)|<\frac{C}{|x|^{2-\alpha}}\text{~for all~}|x|\geq 1.
	\end{equation}
	
	\emph{Estimate of} $f_3$. It is easy to see that $f_3(x)$ is bounded for $|x|<1$, that is, there exists a positive constant $C$ depending on $\alpha$, such that
	\begin{equation}\label{step.3-estim-B1}
		|f_3(x)|<C\text{~for all~}|x|<1.
	\end{equation}
	If $|x|\geq 1$, we have
	$$f_3(x)=\int_{B_1}g_2(\theta)\cos(\theta\cdot x) d\theta+C_2$$
	for a constant $C_2$. We perform an integration by parts on the first term on the right of the above formula, just like we did in the estimation of $f_2$ and conclude that there exists a positive constant $C$ depending only on $\alpha$, such that
	\begin{equation}\label{step.3-estim}
		|f_3(x)-C_2|<\frac{C}{|x|^2}\text{~for all~}|x|\geq 1.
	\end{equation}
	
	\emph{Estimate of} $f_4$. Since $\alpha>1$ and consequently $2\alpha>2$, we easily see that $f_4$ is bounded. Then there exists a positive constant $C$ depending on $\alpha$, such that
	\begin{equation}\label{step.4-estim-B1}
		|f_4(x)|<C\text{~for all~}|x|<1.
	\end{equation}
	Consider $|x|\geq 1$. Recall from the paragraph bellow \eqref{psi1inv-2} that there exist bounded functions $h_3$ and $h_{2,i}$ for $i=1,2$ such that $\Delta\frac{h_2(\theta)}{|\theta|^{2\alpha}}=\frac{h_3(\theta)}{|\theta|^{2\alpha+2}}$ and $\frac{d}{d\theta_i}\frac{h_2(\theta)}{|\theta|^{2\alpha}}=\frac{h_{2,i}(\theta)}{|\theta|^{2\alpha+1}}$.
	
	We write
	$$f_4(x)=\frac{1}{|x|^2}\int_{B_1^c}\frac{h_2(\theta)\Delta(1-\cos(\theta\cdot x))}{|\theta|^{2\alpha}}d\theta.$$
	Denoting by $\nu$ the inward pointing unit normal vector along $\partial B_1^c$, we integrate by parts and write
	\begin{eqnarray*}
		f_4(x)&=&\frac{1}{|x|^{2}}\int_{B_1^c}\frac{h_3(\theta)(1-\cos(\theta\cdot x))}{|\theta|^{2\alpha+2}}d\theta-\frac{1}{|x|^2}\int_{\partial B_1^c}(1-\cos(\theta\cdot x))\frac{\partial}{\partial\nu}\frac{h_2(\theta)}{|\theta|^{2\alpha}}dS\\
		&&~~~~~~~~~~~~~~~~~+\frac{1}{|x|^2}\int_{\partial B_1^c}\frac{h_2(\theta)}{|\theta|^{2\alpha}}\frac{\partial}{\partial\nu}(1-\cos(\theta\cdot x))dS.
	\end{eqnarray*}
	The first two integrals above decay as $\frac{1}{|x|^2}$ and the last one decays as $\frac{1}{|x|}$. Then there exists a positive constant $C$ independent of $x$ such that
	\begin{equation}\label{step.4-estim}
		|f_4(x)|<\frac{C}{|x|}\text{~for all~}|x|\geq 1.
	\end{equation} 
	
	We have proved that the functions $f_1$, $f_3$, and $f_4$ are bounded in the unitary ball $B_1$ (see \eqref{step.1-estim-B1}, \eqref{step.3-estim-B1}, and \eqref{step.4-estim-B1}). In \eqref{step.2-estim-B1} we proved that the function $f_2$ has a special behavior in $B_1$. By \eqref{f1f2f3f4}, we conclude that there exists a positive constant $C$ which only depends on $\alpha$, such that
	\begin{equation}\label{G1tilde-1}
		\left|\widetilde{G}_1(x)-\frac{\alpha^2}{c}\frac{1}{|x|^{2-\alpha}}\right|<C.
	\end{equation}
	See Remark \ref{constants} to justify the constants in the formula above.
	
	Now, in \eqref{step.2-estim}, \eqref{step.3-estim}, and \eqref{step.4-estim} we estimate the asymptotic behavior of $f_2$, $f_3$ and $f_4$. Since the lowest decay of the three functions is $\frac{1}{|x|^{2-\alpha}}$, which is the decay of $f_2$, we can say that there exists a constant $C>0$ depending on $\alpha$, such that 
	$$|f_2(x)|,|f_3(x)|,|f_4(x)|<\frac{C}{|x|^{2-\alpha}}.$$
	Finally notice that the function $f_1$ has a special behavior in $B_1^c$ (see \eqref{step.1-estim}). Then, by \eqref{f1f2f3f4} we conclude that, there exists an explicitly computable constant $\widetilde{C}$, and a positive constant $C$ independent of $x$, such that
	\begin{equation}\label{G1tilde-2}
		\left|\widetilde{G}_1(x)+\frac{8\pi}{\sigma_1^2}\log|x|-\frac{4}{\sigma_1^2}\tilde{h}(x)-\widetilde{C}\right|<\frac{C}{|x|^{2-\alpha}}\text{~for all~}|x|\geq1,
	\end{equation}
	where the function $\tilde{h}$ is given by \eqref{h-tilde}.
	
	By \eqref{G1tilde} and since we fixed $x_0$ so that $|x_0|=1$, we conclude the proof. 
\end{proof}

\begin{proof}[Proof of Proposition \ref{estimateGM}]
	Recall from \eqref{estimateG1-1} and \eqref{estimateG1-2} the definitions of the functions $g$, $h$, and $h_1$. By Proposition \ref{estimateG1} we have that, for $x\in B_M$, it holds that
	\begin{equation}\label{GM-eq1}
		G_1\left(\frac{x}{M}\right)=\frac{\alpha^2M^{2-\alpha}}{(2\pi)^2c}\frac{1}{|x|^{2-\alpha}}+g\left(\frac{x}{M}\right),
	\end{equation}
	and for $x\in B_M^c$ it holds that
	\begin{equation}\label{GM-eq2}
		G_1\left(\frac{x}{M}\right)=-\frac{2}{\pi\sigma_1^2}\log|x|+\frac{2}{\pi\sigma_1^2}\log M+h_1\left(\frac{x}{M}\right)+\frac{M_{2-\alpha}h(\frac{x}{M})}{|x|^{2-\alpha}}.
	\end{equation}
	Since we fixed $x_0$ so that $|x_0|=1$, we will always have $\frac{|x_0|}{M}<1$, then
	\begin{equation}\label{GM-eq3}
		G_1\left(\frac{x_0}{M}\right)=\frac{\alpha^2M^{2-\alpha}}{(2\pi)^2c}+g\left(\frac{x_0}{M}\right).
	\end{equation} 
	By \eqref{changegreen} we have
	$$G_M(x)=\frac{\alpha^2}{(2\pi)^2c}\frac{1}{|x|^{2-\alpha}}-\frac{\alpha^2}{(2\pi)^2c}+\frac{g(\tfrac{x}{M})-g(\tfrac{x_0}{M})}{M^{2-\alpha}}\text{~for~}x\in B_M$$
	and
	$$G_M(x)=-\frac{2}{\pi\sigma_M^2}\log|x|+\delta_M+\frac{h(\tfrac{x}{M})}{|x|^{2-\alpha}}+h_M(x)\text{~for~}x\in B_M^c$$
	where $\delta_M=-\frac{\alpha^2}{(2\pi)^2c}+\frac{2}{\pi\sigma_M^2}\log M-\frac{g(\tfrac{x_0}{M})}{M^{2-\alpha}}$, and $h_M$ is given by 
	$$h_M(x)=\frac{1}{\pi^2\sigma_M^2}\int_{B_{\frac{|x|}{M}}\setminus B_1}\frac{\cos(\theta\cdot\omega)}{|\theta|^2}d\theta$$ 
	for any unitary vector $\omega$.
	
	In the equations above we use that $\sigma_M^2=M^{2-\alpha}\sigma_1^2$; this can be checked by a simple change of variables. 
\end{proof}

\begin{proof}[Proof of Proposition \ref{GM-derivada}]
	By \eqref{changegreen}, it suffices to prove that the first partial derivatives of $G_1$ are in $L_{loc}^1$. 
	
	We recall from \eqref{G1tilde} that $G_1(x)=\frac{1}{(2\pi)^2}\widetilde{G}(x)-\frac{1}{(2\pi)^2}\widetilde{G}_1(x_0)$ for $\widetilde{G}_1$ defined in \eqref{Gtilde-def}. Then we only need to prove that the first partial derivatives of $\widetilde{G}_1$ are in $L_{loc}^1$.  
	
	By \eqref{f1f2f3f4}, it suffices that the first partial derivatives of the functions $f_1$, $f_2$, $f_3$, and $f_4$ defined in \eqref{f1}, \eqref{f2}, \eqref{f3}, and \eqref{f4} are in~$L_{loc}^1$. Standard arguments show this fact.
\end{proof}

\begin{proof}[Proof of Lemma \ref{obstacle-cont-concave}]
	Let us write
	\begin{eqnarray*}
		G_M*\rho(x)&=&\int_{\bb{R}^2} G_M(y)\rho(x-y)dy\\
		&=&\int_{B_M}G_M(y)\rho(x-y)dy+\int_{\bb{R}^2\setminus B_M}G_M(y)\rho(x-y)dy.
	\end{eqnarray*}
	We consider $x$ big enough so that $B_M\cap B(x,\supp{(\rho)})=\emptyset$, then we have
	$$G_M*\rho(x)=\int_{\bb{R}^2\setminus B_M}G_M(y)\rho(x-y)dy.$$
	Now we use Proposition \ref{estimateGM} to ensure that for $|y|>M$, there exists a constant $\delta_M$ (explicitly computable), and a bounded function $h_M$ in $B_M^c$ given by 
	\begin{equation}\label{h-M}
		h_M(y)=\frac{1}{\pi^2\sigma_M^2}\int_{B_{\frac{|y|}{M}}\setminus B_1}\frac{\cos(\theta\cdot \omega)}{|\theta|^2}d\theta,\text{~where $\omega$ is any unitary vector},
	\end{equation} 
	and a function $S(y)$ converging to zero as $y$ goes to infinity, such that 
	\begin{equation*}
		G_M(y)=-\tfrac{2}{\pi\sigma_M^2}\log|y|+h_M(y)+\delta_M+S(y).
	\end{equation*}
	Hence, we can write
	$$G_M*\rho(x)=\int_{\bb{R}^2\setminus B_M}\!\!\!\!S(y)\rho(x-y)dy+\int_{\supp(\rho)}\!\!(-\tfrac{2}{\pi\sigma_M^2}\log|x-y|+h_M(x-y)+\delta_M)\rho(y)dy.$$
	
	Now we differentiate twice, with respect to the variable $x$, under the integral sign. We note that all the second order derivatives of the logarithm function go to zero at infinity. we have to check that the function $h_M$ has the same property. We write $h_M$ in polar coordinates:
	$$h_M(x)=\frac{1}{\pi^2\sigma_M^2}\int_0^{2\pi}\int_1^{\frac{|x|}{M}}\frac{\cos(r\cos\beta)}{r}drd\beta.$$
	This change of variables facilitates the computation of the second order derivatives of $h_M$, so we can easily check that they converge to zero at infinity.
	
	We have proved that the second order derivatives of $G_M*\rho(x)$ converge to zero as $x$ goes to infinity. 
	
	Notice that the Hessian matrix of the function $-\frac{|x|^2}{\sigma_M^2}$ is equal to $-\frac{2}{\sigma_M^2}Id$ for all $x$. Then we can choose a ball $\Omega$ big enough, such that the Hessian matrix of the obstacle $\gamma$ is negative definite in $\bb{R}^2\setminus\Omega$. As a consequence, $\gamma$ is concave outside the set $\Omega$.
\end{proof}

\begin{proof}[Proof of Proposition \ref{odom-supp-cont}]
	We consider the concave envelope of the obstacle function $\gamma$, that is, the least concave function bigger than or equal to $\gamma$. We call this function $\Gamma$. 
	
	By Lemma \ref{obstacle-cont-concave}, there exists a ball $\Omega$ such that the function $\Gamma$ coincides with $\gamma$ outside $\Omega$. We want to prove that $\Gamma$ is continuous and superharmonic. 
	
	The concave envelope is always continuous. Since the obstacle $\gamma$ is twice differentiable, then $\Gamma$ is twice differentiable almost everywhere.  
	
	Fix $x\in\bb{R}^2$ such that $\Gamma$ is twice differentiable in $x$. Consider a tangent line $\ell_x$ to the graph of the function $\Gamma$ at the point $x$. By concavity, $\ell_x$ lies above the function $\Gamma$, then $\Gamma-\ell_x$ is nonpositive and attains its maximum at $x$ (where it is equal to zero). Hence $L_M(\Gamma-\ell_x)(x)\leq0$. Since the truncated fractional Laplacian of a linear function is equal to zero, we obtain that $L_M\Gamma(x)\leq0$.
	
	We repeat the same argument for all $x$ such that $\Gamma$ is twice differentiable and conclude that $L_M\Gamma(x)\leq0$ almost everywhere. This implies that for every smooth function $\phi$ with compact support and positive, it holds that
	$$\langle \Gamma,L_M\phi\rangle\leq0.$$
	We have proved that $\Gamma$ is continuous and superharmonic (recall definition \ref{def-alter-superh}). Hence we can ensure that $\Gamma\geq s$, where $s$ is the least superharmonic majorant defined in \eqref{majorantcontinuous}, and since $\Gamma$ coincides with $\gamma$ in $\Omega^c$, so does $s$. Since $u=s-\gamma$, we finish our proof.
\end{proof}

\section{Proof of Propositions of Section \ref{GREEN}}\label{convergence-green-proofs}

In this section we use a universal positive constant $C$ that only depends on $\alpha$, which may change from line to line. 

\begin{proof}[Proof of Lemma \ref{psiM-pequeno-estimate}]
	We write
	$$\frac{1}{c}\psi_M(\theta)-\widehat{\psi}_{M,n}(\theta)=\int_{B_M}\frac{1-\cos(\theta\cdot y)}{|y|^{2+\alpha}}dy-\frac{1}{n^2}\sum_{y\in B_M^{::}\cup\partial B_M^{::}}\frac{F_n(y)(1-\cos(\theta\cdot y))}{|y|^{2+\alpha}}.$$
	Since for $\theta\in B_{\frac{1}{M}}$ and $y\in B_M$ we have $|(\theta\cdot y)|\leq1$, by Taylor's theorem there exists a smooth function $g$ defined in the unitary ball that satisfies 
	$$1-\cos(\theta\cdot y)=\frac{(\theta\cdot y)^2}{2}+(\theta\cdot y)^4g(\theta\cdot y).$$
	An easy computation shows that 
	$$\frac{1}{2}\int_{B_M}\frac{(\theta\cdot y)^2}{|y|^{2+\alpha}}dy=\frac{|\theta|^2}{4}\int_{B_M}\frac{1}{|y|^\alpha}dy.$$
	Analogously 
	$$\frac{1}{2}\sum_{y\in B_M^{::}\cup\partial B_M^{::}}\frac{F_n(y)(\theta\cdot y)^2}{|y|^{2+\alpha}}=\frac{|\theta|^2}{4}\sum_{y\in B_M^{::}\cup\partial B_M^{::}}\frac{F_n(y)}{|y|^\alpha}.$$
	Then using the definition of $k_n$ (recall from \eqref{kn}), we have
	$$\frac{1}{c}\psi_M(\theta)-\widehat{\psi}_{M,n}(\theta)=\frac{k_n|\theta|^2}{n^{2-\alpha}}+A_1,$$
	where
	$$A_1:=\int_{B_M}\frac{(\theta\cdot y)^4g(\theta\cdot y)}{|y|^{2+\alpha}}dy-\frac{1}{n^2}\sum_{y\in B_M^{::}\cup\partial B_M^{::}}\frac{F_n(y)(\theta\cdot y)^4g(\theta\cdot y)}{|y|^{2+\alpha}}.$$
	Let us denote $f_{y,\theta}(z)=\frac{(\theta\cdot y+z)^4g(\theta\cdot y+z)}{|y+z|^{2+\alpha}}+\frac{(\theta\cdot y-z)^4g(\theta\cdot y-z)}{|y-z|^{2+\alpha}}-2\frac{(\theta\cdot y)^4g(\theta\cdot y)}{|y|^{2+\alpha}}$. We write
	\begin{equation}\label{4}
		A_1=\int_{B_{\frac{r}{n}}}\frac{(\theta\cdot z)^4g(\theta\cdot z)}{|z|^{2+\alpha}}dz+\frac{1}{2}\sum_{y\in A_{\frac{r}{n},M}^{::}\cup\partial A_{\frac{r}{n},M}^{::}}\int_{[-\frac{1}{2n},\frac{1}{2n}]^2}f_{y,\theta}(z)dz.
	\end{equation}
	We use a second-order Taylor expansion with remainder on the function $f_{y,\theta}(z)$; since $z\in [-\frac{1}{2n},\frac{1}{2n}]^2$ we have
	$$f_{y,\theta}(z)=\sum_{|\beta|=2}z^\beta R_{f_{y,\theta}}^\beta(z),$$
	where $R_{f_{y,\theta}}^\beta(z)=\frac{|\beta|}{\beta !}\int_0^1(1-t)^{|\beta|-1}D^\beta f_{y,\theta}(tz)dt$. We easily see that there exists a constant $C$ which depends on $\alpha$ such that
	$$|R_{f_y{y,\theta}}^\beta(z)|\leq C|\theta|^4\Big(\frac{1}{|y+z|^\alpha}+\frac{1}{|y-z|^\alpha}\Big).$$
	Since $z\in[-\frac{1}{2n},\frac{1}{2n}]^2$ we have $|z|\leq\frac{\sqrt{2}}{2n}$, and similarly as we did in the proof of Lemma \ref{LntoL}, we obtain 
	$$\frac{1}{2}\sum_{y\in A_{\frac{r}{n},M}^{n,::}\cup\partial A_{\frac{r}{n},M}^{n,::}}\int_{[-\frac{1}{2n},\frac{1}{2n}]^2}|f_y(z)|dz\leq\frac{C|\theta|^4}{n^2}.$$
	where the constant $C$ also depends only on $\alpha$.
	
	The first term in the right hand side of \eqref{4} is easily seen to be $\leq\frac{C|\theta|^4}{n^{4-\alpha}}$ for a constant $C$ depending only on $\alpha$. Since $4-\alpha>2$ we conclude 
	$$|A_1|\leq\frac{C|\theta|^4}{n^2}.$$
	We have just proved that there exists a positive constant $C$ which depends on $\alpha$ such that
	$$\Big|\frac{1}{c}\psi_M(\theta)-\widehat{\psi}_{M,n}(\theta)-\frac{k_n|\theta|^2}{n^{2-\alpha}}\Big|\leq \frac{C|\theta|^4}{n^2},$$ 
	which together with \eqref{psiMn-hat} concludes the proof of the lemma.
	
\end{proof}

\begin{proof}[Proof of Lemma \ref{psiM-grande-estimate}]
	We consider the function $\widetilde{F}$ in $\frac{1}{n}\bb{Z}^2$ defined as 
	$$\widetilde{F}(y)=n^2\times\mu(y^\square\cap B_{\frac{1}{|\theta|}}).$$ 
	We split the difference $\frac{1}{c}\psi_M(\theta)-\widehat{\psi}_{M,n}(\theta)$ into two parts and estimate each one separately. We write
	$$\frac{1}{c}\psi_M(\theta)-\widehat{\psi}_{M,n}(\theta)=B_1+B_2,$$
	where
	$$B_1:=\int_{B_{\frac{1}{|\theta|}}}\frac{1-\cos(\theta\cdot y)}{|y|^{2+\alpha}}dy-\frac{1}{n^2} \!\!\!\!\sum_{y\in B_{\frac{1}{|\theta|}}^{n,::}\cup\partial B_{\frac{1}{|\theta|}}^{n,::}} \!\!\!\!\!\!\frac{\widetilde{F}(y)(1-\cos(\theta\cdot y))}{|y|^{2+\alpha}}$$
	and
	$$B_2:=\int_{A_{\frac{1}{|\theta|},M}}\frac{1-\cos(\theta\cdot y)}{|y|^{2+\alpha}}dy-\frac{1}{n^2} \!\!\!\!\sum_{y\in A_{\frac{1}{|\theta|},M}^{n,::}\cup\partial A_{\frac{1}{|\theta|},M}^{n,::}} \!\!\!\!\!\!\frac{(F_n(y)-\widetilde{F}(y))(1-\cos(\theta\cdot y))}{|y|^{2+\alpha}},$$
	recalling that $A_{\frac{1}{|\theta|},M}=B_M\setminus B_{\frac{1}{|\theta|}}$.
	
	\emph{Estimate of} $B_1$. For $y\in B_{\frac{1}{|\theta|}}$, we can write
	$$1-\cos(\theta\cdot y)=\frac{(\theta\cdot y)^2}{2}+(\theta\cdot y)^4g(\theta\cdot y)$$
	for a smooth function $g$ defined in the unitary ball. Hence, we write
	\begin{equation}\label{8}
		B_1=B_{1,1}+B_{1,2},
	\end{equation}
	where
	$$B_{1,1}=\frac{|\theta|^2}{4}\Big(\int_{B_{\frac{1}{|\theta|}}}\frac{1}{|y|^\alpha}dy-\frac{1}{n^2}\sum_{y\in B_{\frac{1}{|\theta|}}^{n,::}\cup\partial B_{\frac{1}{|\theta|}}^{n,::}}\frac{\widetilde{F}(y)}{|y|^\alpha}\Big)$$
	and
	$$B_{1,2}=\int_{B_{\frac{1}{|\theta|}}}\frac{(\theta\cdot y)^4g(\theta\cdot y)}{|y|^{2+\alpha}}dy-\frac{1}{n^2}\sum_{y\in B_{\frac{1}{|\theta|}}^{n,::}\cup\partial B_{\frac{1}{|\theta|}}^{n,::}}\frac{\widetilde{F}(y)(\theta\cdot y)^4g(\theta\cdot y)}{|y|^{2+\alpha}}.$$
	We first estimate $B_{1,1}$. Note from the definition of $k_n$ (see \eqref{kn}) that
	$$B_{1,1}=\frac{k_n|\theta|^2}{n^{2-\alpha}}-\frac{|\theta|^2}{4}\Big(\int_{A_{\frac{1}{|\theta|},M}}\frac{1}{|y|^\alpha}dy-\frac{1}{n^2}\sum_{y\in \in A_{\frac{1}{|\theta|},M}^{::}\cup\partial A_{\frac{1}{|\theta|},M}^{n,::}}\frac{F_n(y)-\widetilde{F}(y)}{|y|^\alpha}\Big).$$
	Let us denote $f_y(z):=\frac{1}{2}(\frac{1}{|y+z|^\alpha}+\frac{1}{|y-z|^\alpha}-\frac{2}{|y|^\alpha})$. Then
	$$B_{1,1}=\frac{k_n|\theta|^2}{n^{2-\alpha}}-\frac{|\theta|^2}{4}\sum_{y\in A_{\frac{1}{|\theta|},M}^{n,::}\cup\partial A_{\frac{1}{|\theta|},M}^{n,::}}\int_{[-\frac{1}{2n},\frac{1}{2n}]^2}f_y(z)dz.$$ 
	We use Taylor's theorem and bound $f_y(z)$ by its second order derivatives just like we did in the proof of Lemma \ref{LntoL}. We obtain that there exists a constant $C$ which only depends on $\alpha$ such that
	$$\int_{[-\frac{1}{2n},\frac{1}{2n}]^2}|f_y(z)|dz\leq \frac{C|z|^4}{(|y|-\frac{\sqrt{2}}{2n})^{2+\alpha}}.$$
	Then
	$$\frac{|\theta|^2}{4}\sum_{y\in A_{\frac{1}{|\theta|},M}^{n,::}\cup\partial A_{\frac{1}{|\theta|},M}^{n,::}}\int_{[-\frac{1}{2n},\frac{1}{2n}]^2}|f_y(z)|dz\leq\frac{C|\theta|^{2+\alpha}}{n^2}.$$
	We have proved that there exists a constant $C$ which only depends on $\alpha$, such that
	\begin{equation}\label{12}
		\Big|B_{1,1}-\frac{k_n|\theta|^2}{n^{2-\alpha}}\Big|\leq\frac{C|\theta|^{2+\alpha}}{n^2}.
	\end{equation}
	Now let us estimate $B_{1,2}$. We proceed in the same way as in the estimation of $B_{1,1}$. Using the second order Taylor's theorem we obtain that there exists a constant $C$ depending only on $\alpha$, such that
	\begin{equation}\label{13}
		|B_{1,2}|\leq\frac{C|\theta|^4}{n^2}\int_{B_{\frac{1}{|\theta|}}}\frac{1}{|y|^\alpha}dy\leq \frac{C|\theta|^{2+\alpha}}{n^2},
	\end{equation}
	where the constant $C$ has changed in the last inequality. 
	
	Putting \eqref{12} and \eqref{13} into \eqref{8}, we obtain
	$$|B_1-\frac{k_n|\theta|^2}{n^{2-\alpha}}|\leq\frac{C|\theta|^{2+\alpha}}{n^2}.$$

	\emph{Estimate of} $B_2$.
	We repeat the same argument used in the estimate of $B_1$. That is, first we write the difference between the integral and the sum, as a sum of integrals over small squares. Then we symmetrize the arguments in the integrals and use the second order Taylor's theorem with remainder. We conclude that there exists a constant $C$ which only depends on $\alpha$, such that
	$$|B_2|\leq\frac{C|\theta|^{2+\alpha}}{n^2}.$$
	The estimations of $B_1$ and $B_2$ together give us
	$$\frac{1}{c}\psi(\theta)-\widehat{\psi}_n(\theta)-\frac{k_n|\theta|^2}{n^{2-\alpha}}=\frac{\tilde{r}_n(\theta)|\theta|^{2+\alpha}}{n^2},$$
	where $\tilde{r}_n$ is a uniformly bounded sequence of functions defined in $[-n\pi,n\pi]^2\setminus B_{\frac{1}{M}}$.
	
	Finally, by \eqref{psiMn-hat} we conclude the proof of the first part of Lemma \ref{psiM-grande-estimate}. The proof of (i) and (ii) follows the same idea used in the first part of the proof.
\end{proof}

\begin{proof}[Proof of Lemma \ref{GMntoGMtilde}]
	Let us introduce the notation
	\begin{eqnarray*}
		f_1(x)&=&\int_{B_{\frac{1}{M}}}\frac{\cos(\theta\cdot x)-1}{\psi_M(\theta)}d\theta,\\
		f_{1,n}(x)&=&\int_{B_{\frac{1}{M}}}\frac{\cos(\theta\cdot x)-1}{\psi_{M,n}(\theta)}d\theta,\\
		f_2(x)&=&\int_{[-n\pi,n\pi]^2\setminus B_{\frac{1}{M}}}\frac{\cos(\theta\cdot x)}{\psi_M(\theta)}d\theta,\\
		f_{2,n}(x)&=&\int_{[-n\pi,n\pi]^2\setminus B_{\frac{1}{M}}}\frac{\cos(\theta\cdot x)}{\psi_{M,n}(\theta)}d\theta,\\
		f_3(x)&=&\int_{\bb{R}^2\setminus [-n\pi,n\pi]^2}\frac{\cos(\theta\cdot x)}{\psi_M(\theta)}d\theta.
	\end{eqnarray*}
	Then 
	\begin{equation}\label{23}
		\frac{c}{c_n}\widetilde{G}_M(x)-\widetilde{G}_{M,n}(x)=\left(\frac{c}{c_n}f_1(x)-f_{1,n}(x)\right)+\left(\frac{c}{c_n}f_2(x)-f_{2,n}(x)\right)+\frac{c}{c_n}f_3(x).
	\end{equation}
	We split the estimation of $\frac{c}{c_n}\widetilde{G}_M-\widetilde{G}_{M,n}$ into three different steps, consisting in the estimation of $\frac{c}{c_n}f_1(x)-f_{1,n}(x)$, $\frac{c}{c_n}f_2(x)-f_{2,n}(x)$, and $\frac{c}{c_n}f_3(x)$, separately. 
	
	\emph{Estimate of} $\frac{c}{c_n}f_1(x)-f_{1,n}(x)$. By Lemma \ref{psiM-pequeno-estimate} we have that there exists a constant $C$ which only depends on $\alpha$, such that
	$$\Big|\frac{c}{c_n}f_1(x)-f_{1,n}(x)\Big|\leq \frac{C}{n^2}\int_{B_{\frac{1}{M}}}\frac{(1-\cos(\theta\cdot x))|\theta|^4}{\psi_{M,n}(\theta)\psi_M(\theta)}d\theta.$$
	Since $\psi_{M,n}$ and $\psi_M$ are both on the order of $|\theta|^2$ in $B_{\frac{1}{M}}$, we conclude
	\begin{equation}\label{G-1}
		\left|\frac{c}{c_n}f_1(x)-f_{1,n}(x)\right|\leq\frac{C}{n^2}.
	\end{equation}

	\emph{Estimate of} $\frac{c}{c_n}f_2(x)-f_{2,n}(x)$. In the region $[-n\pi,n\pi]^2\setminus B_{\frac{1}{M}}$, the functions $\psi_{M,n}$ and $\psi_M$ are both on the order $|\theta|^\alpha$.
	
	Denote $\phi_n(\theta):=\frac{c}{c_n\psi_M(\theta)}-\frac{1}{\psi_{M,n}(\theta)}$. Note that by Lemma \ref{psiM-grande-estimate} we have
	$$\phi_n(\theta)=\frac{c}{n^2c_n}\frac{r_n(\theta)|\theta|^{2+\alpha}}{\psi_M(\theta)\psi_{M,n}(\theta)}$$
	for a uniformly bounded sequence of function $r_n$ defined in $[-n\pi,n\pi]^2\setminus B_{\frac{1}{M}}$. We have
	\begin{eqnarray*}
		\frac{c}{c_n}f_2(x)-f_{2,n}(x)&=&\int_{[-n\pi,n\pi]^2\setminus B_{\frac{1}{M}}}\cos(\theta\cdot x)\phi_n(\theta)d\theta\\
		&=&\frac{-1}{|x|^2}\int_{[-n\pi,n\pi]^2\setminus B_{\frac{1}{M}}}\Delta(\cos(\theta\cdot x))\phi_n(\theta)d\theta.
	\end{eqnarray*}
	Integrating by parts, we obtain
	\begin{eqnarray*}
		\frac{c}{c_n}f_2(x)-f_{2,n}(x)&=&\frac{-1}{|x|^2}\int_{[-n\pi,n\pi]^2\setminus B_{\frac{1}{M}}}\cos(\theta\cdot x)\Delta\phi_n(\theta)d\theta\\
		&&~~~~~~+\frac{1}{|x|^2}\int_{\partial([-n\pi,n\pi]^2\setminus B_{\frac{1}{M}})}\cos(\theta\cdot x)\frac{\partial}{\partial\nu}\phi_n(\theta)dS\\
		&&~~~~~~-\frac{1}{|x|^2}\int_{\partial([-n\pi,n\pi]^2\setminus B_{\frac{1}{M}})}\phi_n(\theta)\frac{\partial}{\partial\nu}\cos(\theta\cdot x) dS,
	\end{eqnarray*}
	where $\nu$ represents the inward pointing unit normal vector along the surface\\ $\partial([-n\pi,n\pi]^2\setminus B_{\frac{1}{M}})$. By items $(i)$ and $(ii)$ of Lemma \ref{psiM-grande-estimate}, we have $|\Delta\phi_n(\theta)|\leq\frac{C}{n^2|\theta|^\alpha}$, then 
	\begin{eqnarray}\label{24}
		\nonumber\frac{1}{|x|^2}\Big|\int_{[-n\pi,n\pi]^2\setminus B_{\frac{1}{M}}}\cos(\theta\cdot x)\Delta\phi_n(\theta)d\theta\Big|&\leq&\frac{C}{n^2|x|^2}\int_{[-n\pi,n\pi]^2}\frac{1}{|\theta|^\alpha}d\theta\\
		&\leq&\frac{C}{n^\alpha|x|^2}.
	\end{eqnarray}
	Now let us estimate the boundary terms. Since $\psi_{M,n}$ is a periodic function with period $[-n\pi,n\pi]^2$, we have
	\begin{equation}\label{25}
		\int_{\partial [-n\pi,n\pi]^2}\cos(\theta\cdot x)\frac{\partial}{\partial\nu}\frac{1}{\psi_{M,n}(\theta)}dS=\int_{\partial [-n\pi,n\pi]^2}\frac{1}{\psi_{M,n}(\theta)}\frac{\partial}{\partial\nu}\cos(\theta\cdot x) dS=0.
	\end{equation}
	On the other hand it is easy to see that 
	\begin{equation}\label{26}
		\frac{1}{|x|^2}\Big|\int_{\partial B_{\frac{1}{M}}}\cos(\theta\cdot x)\frac{\partial}{\partial\nu}\phi_n(\theta)dS\Big|\leq\frac{C}{n^2|x|^2}
	\end{equation}
	and
	\begin{equation}\label{27}
		\frac{1}{|x|^2}\Big|\int_{\partial B_{\frac{1}{M}}}\phi_n(\theta)\frac{\partial}{\partial\nu}\cos(\theta\cdot x) dS\Big|\leq\frac{C}{n^2|x|}.
	\end{equation}
	Define 
	\begin{equation}\label{T}
		T_n(x):=\frac{c}{c_n|x|^2}\int_{\partial [-n\pi,n\pi]^2}\cos(\theta\cdot x)\frac{\partial}{\partial\nu}\frac{1}{\psi_M(\theta)}dS-\frac{1}{\psi_M(\theta)}\frac{\partial}{\partial\nu}\cos(\theta\cdot x) dS.
	\end{equation}
	Then, from \eqref{24}, \eqref{25}, \eqref{26}, and \eqref{27}, we conclude 
	\begin{equation}\label{G-2}
		\left|\frac{c}{c_n}f_2(x)-f_{2,n}(x)-T_n(x)\right|\leq \frac{C}{n^\alpha}\left(\frac{1}{|x|^2}+\frac{1}{|x|}\right).
	\end{equation}
	
	\emph{Estimate of} $\frac{c}{c_n}f_3(x)$. Note that
	\begin{eqnarray*}
		\frac{c}{c_n}f_3(x)&=&\frac{c}{c_n}\int_{\bb{R}^2\setminus [-n\pi,n\pi]^2}\frac{\cos(\theta\cdot x)}{\psi_M(\theta)}d\theta\\
		&=&\frac{-c}{c_n|x|^2}\int_{\bb{R}^2\setminus [-n\pi,n\pi]^2}\frac{\Delta(\cos(\theta\cdot x))}{\psi_M(\theta)}d\theta.
	\end{eqnarray*}
	Performing again an integration by parts, we obtain
	
	$$\frac{c}{c_n}f_3(x)=\frac{-c}{c_n|x|^2}\int_{\bb{R}^2\setminus [-n\pi,n\pi]^2}\cos(\theta\cdot x)\Delta\left(\frac{1}{\psi_M(\theta)}\right)d\theta-T_n(x),$$
	where $T_n$ was defined in \eqref{T}.
	
	Since $|\Delta\big(\frac{1}{\psi_M(\theta)}\big)|\leq\frac{C}{|\theta|^{2+\alpha}}$ on $\bb{R}^2\setminus B_{\frac{1}{M}}$ an, in particular, on $\bb{R}^2\setminus [-n\pi,n\pi]^2$, we have that
	$$\frac{1}{|x|^2}\left|\int_{\bb{R}^2\setminus [-n\pi,n\pi]^2}\cos(\theta\cdot x)\Delta\big(\frac{1}{\psi_M(\theta)}\big)d\theta\right|\leq\frac{C}{n^\alpha|x|^2}.$$
	Then
	\begin{equation}\label{G-3}
		\left|\frac{c}{c_n}f_3(x)+T_n(x)\right|\leq\frac{C}{n^\alpha|x|^2}.
	\end{equation}
	By \eqref{23}, \eqref{G-1}, \eqref{G-2}, and \eqref{G-3} we conclude the proof of Lemma \ref{GMntoGMtilde}
\end{proof}

\section*{Acknowledgments}
The authors would like to thank Charles Smart, Luis Caffarelli, and Luis Silvestre for fruitful discussions.

\addcontentsline{toc}{section}{References}
\bibliographystyle{abbrv}

\end{document}